\newtheorem*{thm-plain}{Theorem}
\newtheorem{thm}{Theorem}[section]
\newtheorem*{thmw}{Theorem}
\newtheorem{lem}[thm]{Lemma}
\newtheorem{prp}[thm]{Proposition}
\theoremstyle{definition}
\newtheorem{dfn}[thm]{Definition}
\newtheorem*{thmA}{Theorem A}
\newtheorem*{thmB}{Theorem B}
\theoremstyle{remark}
\newtheorem{rem}[thm]{Remark}
\newtheorem*{conj}{Conjecture}
\newcommand{\Co}{\mathbb{C}}
\newcommand{\R}{\mathbb{R}}
\newcommand{\RP}{\mathbb{RP}}
\newcommand{\Z}{\mathbb{Z}}
\newcommand{\SOr}{\mathrm{SO}}
\newcommand{\Or}{\mathrm{O}}
\newcommand{\To}{\rightarrow}
\newcommand{\LTo}{\longrightarrow}
\newcommand{\LMTo}{\longmapsto}
\newcommand{\Gp}{G^{\scriptscriptstyle+}}
\newcommand{\Gt}{G^{\scriptscriptstyle\times}}
\newcommand{\Gs}{G^{*}}
\newcommand{\Orb}{\mathcal{O}}
\title[On metrics on 2-orbifolds all of whose geodesics are closed]{On metrics on 2-orbifolds\\ all of whose geodesics are closed}
\author{Christian Lange}
\address{Christian Lange, Mathematisches Institut der Universit\"at zu K\"oln, Weyertal 86-90, 50931 K\"oln, Germany}
\email{clange@math.uni-koeln.de}
\thanks{The author is partially supported by the DFG funded project SFB/TRR 191 `Symplectic Structures in Geometry, Algebra and Dynamics'.}
\subjclass{53C22, 57R18, 55R55}
\begin{document}

\begin{abstract} We show that the periods and the topology of the space of closed geodesics on a Riemannian 2-orbifold all of whose geodesics are closed depend, up to scaling, only on the orbifold topology and compute it. In the manifold case we recover the fact proved by Gromoll, Grove and Pries that all prime geodesics have the same length, without referring to the existence of simple geodesics. We partly strengthen our result in terms of conjugacy of contact forms and explain how to deduce rigidity on the projective plane based on a systolic inequality due to Pu.
\end{abstract}\maketitle	

\section{Introduction}
\label{sec:Pse_Introduction}
 
Riemannian manifolds all of whose geodesics are closed have been studied since the beginning of the $20$th century, when the first nontrivial examples were constructed by Tannery and Zoll. The famous book of Besse \cite{MR0496885} still describes the state of knowledge of the subject to a large extent. Some notable exceptions are concerned with relations between the lengths of geodesics on a Riemannian manifold all of whose geodesics are closed, henceforth called Besse manifold, and its topology. For instance a conjecture of Berger, stating that on a simply connected Besse manifold all prime geodesics have the same length, was proved by Gromoll and Grove for $2$-spheres \cite{MR0636885} and recently by Radeschi and Wilking for all topological spheres of dimension at least $4$ \cite{RadWil}. Apart from spheres, which admit many Besse metrics, i.e. Riemannian metrics all of whose geodesics are closed, the only known Besse manifolds are the other compact rank one symmetric spaces. 
Moreover, it was shown by Pries that the conclusion of Berger's conjecture also holds for the real projective plane \cite{MR2481742}, i.e. that all prime geodesics of a Besse metric have the same length.
 
Only little is known in the more general setting of Riemannian orbifolds. We define a \emph{Besse metric} on an orbifold as a Riemannian orbifold metric all of whose orbifold geodesics are closed and a \emph{Besse orbifold} as an orbifold endowed with a Besse metric (cf. Section \ref{sub:Riem_orb}). On Besse orbifolds new phenomena occur that are not present in the manifold case. For instance, Berger's conjecture does not hold for so-called spindle orbifolds \cite{MR2529473}, which admit many Besse metrics (see Section \ref{sub:P-metrics_on_spindle}). However, it turns out that there is still a relation between the periods of geodesics on a Besse $2$-orbifold and its topology. In fact, we generalize the results of Gromoll, Grove and Pries mentioned above in a unifying approach to the setting of Riemannian $2$-orbifolds. We prove the following result.

\begin{thmA} The geodesic periods of a Besse $2$-orbifold are determined up to scaling by the orbifold topology. In the manifold case all prime geodesics have the same length.
\end{thmA}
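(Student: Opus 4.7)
The plan is to pass from the geodesic flow to an $S^1$-action on the unit tangent bundle $T^1O$ of the orbifold, and then read off the periods from the invariants of the induced Seifert structure. Since every orbifold geodesic is closed, Wadsley's theorem on flows with periodic orbits produces a common period $\tau>0$ such that every orbit of the geodesic flow on $T^1O$ closes after time $\tau$. The flow therefore descends to an effective smooth $S^1$-action on $T^1O$, and the length of the prime geodesic parametrised by an orbit with stabilizer $\mathbb{Z}_k$ is exactly $\tau/k$. Recovering the set of periods is thus equivalent to recovering the list of isotropy orders (with multiplicities) of this $S^1$-action.

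The second ingredient is that $T^1O$ already carries a \emph{canonical} Seifert structure given by the footpoint map $\pi\colon T^1O\to O$, whose invariants depend only on the orbifold topology of $O$: the fibre over a cone point of order $n$ is a circle with isotropy $\mathbb{Z}_n$, and the rational Euler number is prescribed by the orbifold Euler characteristic. From this I would first read off the topological type of the $3$-orbifold $T^1O$. Then I would compare the canonical Seifert structure with the one coming from the geodesic-flow $S^1$-action. Using the local model of a geodesic passing near or through a cone point of order $n$, I would determine the orders of the exceptional orbits of the geodesic-flow action from the cone-point data of $O$, uniformly across spheres, footballs, spindles and bad $2$-orbifolds. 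Seifert's classification then forces the geodesic-flow Seifert invariants to be combinatorially determined by those of $\pi$, hence by the orbifold topology of $O$, which proves the first assertion up to the scaling $\tau$.

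For the manifold case ($O$ diffeomorphic to $S^2$ or $\mathbb{RP}^2$) the footpoint bundle has no singular fibres, so $T^1O$ is $\mathbb{RP}^3$ or its $\mathbb{Z}_2$-quotient $T^1\mathbb{RP}^2$. In particular there is no orbifold singularity of $O$ available to force exceptional orbits of the geodesic-flow action, and the analysis of the previous paragraph forces this action to be free. Every geodesic-flow orbit then has period exactly $\tau$, so every prime geodesic has length $\tau$; this recovers the theorems of Gromoll--Grove and of Pries without having to invoke the existence of simple closed geodesics on a Besse $2$-sphere.

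The main obstacle is the middle step: making precise the local dictionary between a cone point of order $n$ in $O$ and the isotropy orders of the geodesic-flow orbits through vectors based at, or passing close to, that cone point, in a way that is truly uniform across the very different topological types of Besse $2$-orbifolds (in particular the bad ones). A further subtlety in the manifold case is ruling out Seifert structures on $\mathbb{RP}^3$ or $T^1\mathbb{RP}^2$ with nontrivial exceptional fibres that would formally be compatible with the ambient topology; the closure condition on geodesics has to be used to rigidify the $S^1$-action and force the freeness that yields length equality.
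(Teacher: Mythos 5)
Your skeleton matches the paper's strategy: Wadsley/Epstein gives a periodic geodesic flow, hence an $S^1$-action and a Seifert fibering $\mathcal{F}_g$ on $T^1\Orb$ transversal to the footpoint fibering $\mathcal{F}_t$, and the periods are read off from the isotropy orders of $\mathcal{F}_g$. But the central assertion of your middle step --- that ``Seifert's classification then forces the geodesic-flow Seifert invariants to be combinatorially determined by those of $\pi$'' --- is false as stated, and this is exactly where the real work lies. The lens space $L(p+q,1)=T^1 S^2(p,q)$ carries \emph{two} inequivalent Seifert fiberings with orientable base and the required symmetry: $\mathcal{F}^+$ (induced by $(z_1,z_2)\mapsto(e^{it}z_1,e^{it}z_2)$, no exceptional fibers) and $\mathcal{F}^-$ (induced by $(z_1,z_2)\mapsto(e^{it}z_1,e^{-it}z_2)$, two exceptional fibers of order $(p+q)/\kappa$). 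The ambient topology of $T^1\Orb$ and the cone data of $\Orb$ cannot distinguish which one the geodesic flow induces; the paper's Lemma 3.6 reduces to exactly this dichotomy and no further. The paper breaks the tie with the time-reversal involution $i:(x,v)\mapsto(x,-v)$: one lifts everything to $S^3$, shows that the lift $\tilde i$ commutes with the deck group $\Gamma\cong\Z_{p+q}$ in the spindle case (Lemma 3.5) but satisfies $\gamma\tilde i=\tilde i\gamma^3$ over $\RP^2$ (Lemma 3.9), and then compares the rotation directions of a generator of $\Gamma$ on the two $\Gamma$-invariant Hopf fibers $S^1_g$ and $\tilde i(S^1_g)$. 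Same direction forces $\mathcal{F}^+$, opposite directions force $\mathcal{F}^-$. Your manifold-case argument has the same gap in sharper form: $T^1\RP^2\cong L(4,1)$ admits the fibering $\mathcal{F}^-$ with quotient $S^2(2,2)$, which would give two exceptional geodesics of half length; the absence of cone points on $\RP^2$ does not exclude it (note $r=4$ is divisible by $4$, so the divisibility constraint in Lemma 3.6 is satisfied). You flag this subtlety honestly, but the resolving idea --- the anticommutation relation of Lemma 3.9 and the resulting same-direction rotation of $S^1_g$ and $\tilde i S^1_g$ --- is the missing content, not a routine check.

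A second, independent gap: your proposal only treats orbifolds with isolated singularities. For $2$-orbifolds with mirror or corner-reflector singularities, $T^1\Orb$ is not a manifold, the dictionary ``isotropy order $k$ $\Leftrightarrow$ period $\tau/k$'' breaks down, and the paper must pass to the orientable double cover $\hat\Orb\to\Orb$ and decide, geodesic by geodesic, which of the two $s$-invariant geodesics on $\hat\Orb$ are \emph{pointwise} fixed by the deck involution $s$ (Lemma 2.11). This requires genuinely geometric arguments --- e.g.\ that exceptional geodesics on $S^2(n,n)$ must pass through a branch point of $S^2(n,n)\to S^2(2,2,n)$, or that the three exceptional geodesics on $S^2(2,2,2)$ have disjoint trajectories by a minimizing-segment uniqueness argument --- none of which follow from Seifert-invariant bookkeeping. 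The paper also shows by example (Section 2.6) that the orbifolds of geodesics and the geodesic periods do not determine each other in the boundary cases, so any purely Seifert-theoretic argument is structurally insufficient there.
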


The \emph{geodesic periods} of a Besse 2-orbifold can be thought of as the set of lengths of prime (orbifold) geodesics counted with multiplicity. Note, however, that we prove the theorem for a slightly more general notion of geodesic periods; see Definition \ref{dfn:period_spectrum}. By the \emph{orbifold topology} we mean the orbifold diffeomorphism type, which can, in the case of $2$-orbifolds, be encoded by finitely many numerical invariants (see Section \ref{sub:Riem_orb}).

In the manifold case the proofs by Gromoll, Grove and Pries hinge on the existence of at least three simple closed geodesics, i.e. closed geodesics without self-intersections (cf. Remark \ref{rem:Lusternik_Schnirelmann}). Using a connectedness argument they moreover show that all prime geodesics are simple. This observation combined with the Blaschke conjecture for $S^2$ proved by Green \cite{Green} shows that a Besse metric on the real projective plane has constant curvature \cite{MR2481742}. Our proof is independent of the existence of simple closed geodesics. Moreover, we show how rigidity on the real projective plane can be deduced from our result based on a systolic inequality due to Pu and the fact that the geodesic flows of any two Besse metrics on the real projective plane are conjugated by a contactomorphism (see Section \ref{sub:rigidity}). The conjugacy of geodesic flows of Besse metrics is shown to hold on any $2$-orbifold with isolated singularities (see Section \ref{sub:conjugacy_contact_structures}), generalizing the case of the $2$-sphere considered in \cite{ABHS16} .


The paper is structured as follows. After reviewing some preliminaries and examples, we first prove that \emph{a $2$-orbifold admits a Besse metric if and only if it is either bad or spherical}, in other words, if and only if its orbifold Euler characteristic is positive (Proposition \ref{prp:admit_Besse}). Moreover, in Section \ref{sub:P-metrics_on_spindle} we explain that in many cases there exists an abundance of Besse metrics. The space of oriented prime geodesics on a Besse $2$-orbifold $\Orb$ has a natural orbifold structure $\Orb_g$ and admits a natural involution $i$ coming from time reversal. We call $\Orb_g$ the \emph{orbifold of oriented geodesics} and $\Orb_g/i$ the \emph{orbifold of non-oriented geodesics}, and prove the following rigidity result.

\begin{thmB} For a Besse $2$-orbifold $\Orb$ the orbifolds of oriented and non-oriented geodesics are determined by the orbifold topology of $\Orb$. More precisely, the following cases can occur.
\begin{enumerate}
\item	$\Orb \cong S^2/G$, $\Orb_g \cong S^2/\Gt$ and $\Orb_g/i \cong S^2/\Gs$ as orbifolds where $G<\Or(3)$ is a finite subgroup, $\Gt= \{\mathrm{det}(g)g| g \in G\}<\SOr(3)$ and $\Gs= \left\langle G, -1 \right\rangle<\Or(3)$.
\item $pq$ odd, $\Orb \cong S^2(p,q)$, $\Orb_g \cong S^2((p+q)/2,(p+q)/2)$ and $\Orb_g/i \cong \RP^2((p+q)/2)$.
\item $pq$ even, $\Orb \cong S^2(p,q)$, $\Orb_g \cong S^2((p+q)/\kappa,(p+q)/\kappa)$ and $\Orb_g/i \cong D^2((p+q)/\kappa;)$ with $\kappa$ being $1$ or $2$ depending on whether $p+q$ is odd or even.
\item $pq$ odd, $\Orb \cong D^2(;p,q)$, $\Orb_g \cong S^2(2,2,(p+q)/2)$ and $\Orb_g/i \cong D^2(2;(p+q)/2)$.
\item $pq$ even, $\Orb \cong D^2(;p,q)$, $\Orb_g \cong S^2(2,2,(p+q)/\kappa)$ and $\Orb_g/i \cong D^2(;2,2,(p+q)/\kappa)$ with $\kappa$ being $1$ or $2$ depending on whether $p+q$ is odd or even.
\end{enumerate}
\end{thmB}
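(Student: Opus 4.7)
The plan is to use the fact (via Wadsley's theorem, in the orbifold setting) that the geodesic flow of a Besse metric on $T^1\Orb$ is periodic and hence defines an $S^1$-action whose quotient carries the structure of a $2$-orbifold canonically identified with $\Orb_g$. Invariance of this quotient under the choice of Besse metric on a fixed $\Orb$ follows either from the conjugacy of Besse contact forms established in Section~\ref{sub:conjugacy_contact_structures}, or, more elementarily, from the fact that the Seifert-type combinatorics of the $S^1$-action are locally constant in the metric. It then suffices to exhibit one model Besse metric per orbifold type and to read off the resulting Seifert fibration of $T^1\Orb$ together with the time-reversal involution.

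For case (i), with $\Orb = S^2/G$ and $G<\Or(3)$, I would work with the round metric. Oriented great circles on $S^2$ are parametrized by their unit normals, so $T^1S^2/S^1 \cong S^2$. A rotation $g\in\SOr(3)$ acts on the normal as $g$ itself, whereas an orientation-reversing $g\in\Or(3)$ reverses the induced orientation of the great circle and hence acts on the normal as $-g$. The induced $G$-action on this second $S^2$ is therefore by $\Gt=\{\det(g)g : g\in G\}<\SOr(3)$, giving $\Orb_g \cong S^2/\Gt$. Time reversal corresponds to negating the normal, so $i$ is the antipodal map, and adjoining it produces $\langle\Gt,-1\rangle = \langle G,-1\rangle = \Gs$, whence $\Orb_g/i \cong S^2/\Gs$.

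For cases (ii)--(v) I would use rotationally symmetric Besse metrics on $S^2(p,q)$ and their reflection quotients $D^2(;p,q)$. The geodesics form a Clairaut family of invariant tori in $T^1\Orb$ degenerating over the two cone points to the equatorial geodesic and its reverse. These degenerate orbits become two cone points of $\Orb_g$, whose underlying space is $S^2$. The heart of the calculation is the isotropy order at these cone points: a direct analysis of the Besse flow near a pole of a $(p,q)$-spindle shows the $S^1$-isotropy of either equatorial orbit to be $\Z/((p+q)/\kappa)$, where $\kappa\in\{1,2\}$ records the parity of $p+q$. Cases (iv)--(v) then follow by taking the further reflection quotient that realizes $D^2(;p,q)$ as the $\Z/2$-quotient of $S^2(p,q)$, which descends to $\Orb_g$ with a fixed-point pattern again governed by the parities of $p$ and $q$ and yielding the stated quotients.

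The principal obstacle will be the isotropy computation at the equatorial orbits, which demands an accurate local model of the Besse geodesic flow near an orbifold singularity and careful bookkeeping of the prime period of a generic Clairaut geodesic against that of the equator; this is what justifies the factor $\kappa$ and the symmetric form $(p+q)/\kappa$ of the resulting order. Once these Seifert invariants of the $S^1$-action on $T^1\Orb$ are in hand and the action of time reversal on the quotient is identified, the enumeration (i)--(v) is immediate.
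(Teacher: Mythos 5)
There is a genuine gap at the very first step: your entire strategy rests on reducing to one model Besse metric per orbifold type, but the metric-independence you need for that reduction is not available. The conjugacy of contact forms in Section~\ref{sub:conjugacy_contact_structures} cannot be invoked here, because the proof of Theorem~\ref{thm:Reeb_conjugacy} itself uses the conclusions of Sections~\ref{sub:spindle} and~\ref{sub:orientable} (that $S^3/\tilde{\mathcal F}_g\cong S^2$) and Lemmas~\ref{lem:seifert_classif} and~\ref{lem:lens} to conjugate the two $S^1$-actions; using it to prove Theorem~B would be circular. Your fallback, that the Seifert combinatorics of the geodesic $S^1$-action are ``locally constant in the metric,'' only yields global constancy if the space of Besse metrics on a fixed $\Orb$ is connected, and that is not known (indeed, whether this moduli space is even finite-dimensional is posed as an open conjecture in Section~\ref{sub:P-metrics_on_spindle}). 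The gap is not academic: for $S^2$, for teardrops and spindles, and for $S^2/G$ with $-1\notin G$ there genuinely exist non-round, non-rotationally-symmetric Besse metrics (Zoll's and Guillemin's constructions \cite{MR0426063}), so computing $\Orb_g$ for the round or Clairaut models does not establish cases (i)--(v) for an arbitrary Besse metric. Your round-metric computation of $\Gt$ in case (i) and your Clairaut picture in cases (ii)--(v) are fine as far as they go, but they prove the theorem only for those particular metrics.

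The paper's proof is structured precisely to avoid this reduction. For a spindle $S^2(p,q)$ it first identifies $T^1\Orb\cong L(p+q,1)$ topologically (Lemma~\ref{lem:gluing}), then uses the classification of Seifert fiberings of lens spaces with base $S^2(k,k)$ (Lemma~\ref{lem:lens}) to conclude that for \emph{any} Besse metric the fibering $\mathcal F_g$ must be conjugate to one of exactly two models $\mathcal F^{\pm}$, and finally uses the interplay between the deck group $\Gamma\cong\Z_{p+q}$, the transversal fibering $\mathcal F_t$, and the time-reversal lift $\tilde i$ (Lemmas~\ref{lem:commute} and~\ref{lem:exceptional_geo} plus the rotation-direction argument) to decide which of the two occurs. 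If you want to salvage your approach, you would need to replace the appeal to a model metric by an argument of this type that constrains $\mathcal F_g$ for an arbitrary Besse metric; the isotropy computation near the poles that you flag as the ``principal obstacle'' is then subsumed by that classification rather than by a local analysis of the flow.
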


For explanations on the notations we refer to Section \ref{sec:pre}. The covering $\Orb_g\To \Orb_g/i$ encodes information on the geodesic periods of $\Orb$ (see Section \ref{sub:orb_of_geo}). In this way we will apply Theorem B in the proof of Theorem A.  However, in Section \ref{sub:orb_of_geo} we show by example that the geodesic periods and the orbifolds of geodesics in general do not determine each other. For the proof of Theorem A in the case of orbifolds with non-isolated singularities additional geometric arguments are required (see Section \ref{sub:non-orient_spher}). The proof of Theorem B relies on the following ideas. The unit tangent bundle $M=T^1\Orb$ of a Besse $2$-orbifold $\Orb$ with isolated singularities is a manifold and the geodesic flow on it is periodic due to a result of Epstein \cite{MR0288785}. We obtain two transversal Seifert fiberings on $M$, one from the geodesic flow and another one from the projection $T^1\Orb \To \Orb$. Properties of these Seifert fiberings and their interplay imply the claim in many cases. For Besse $2$-orbifolds with codimension one singularities the result is obtained by considering the orientable double cover. An explicit list of all $2$-orbifolds admitting a Besse metric together with their orbifolds of geodesics and their geodesic periods can be found in the appendix, Table \ref{tab:gls}.

Since our approach does not rely on the existence of simple closed geodesics, it also works in more general Hamiltonian settings \cite{FraLaSuhr}. In \cite{FraLaSuhr} a Hamiltonian version of the result of Gromoll and Grove is proven, which could not have been obtained along the lines of the original proof. However, note that in general our result is a Riemannian phenomenon that cannot be seen from the Hamiltonian point of view. For instance, the real projective plane and the teardrop $S^2(3)$ have the same unit tangent bundle (see Lemma \ref{lem:gluing} and Section \ref{sub:pro_plane}), but the geodesic periods of Besse metrics on them differ (see appendix, Table \ref{tab:gls}).
\newline
\newline
\emph{Acknowledgements.} The author would like to thank Alexander Lytchak for drawing his attention to the subject. He is grateful to Alberto Abbondandolo for explaining to him how this paper's result combined with work by Pu implies rigidity on the real projective plane (see Section \ref{sub:rigidity}). He thanks the referee for critical comments and suggestions that helped to improve the exposition.

The research in this paper is part of a project in the SFB/TRR 191 `Symplectic Structures in Geometry, Algebra and Dynamics'. A partial support is gratefully acknowledged.

\section{Preliminaries}\label{sec:pre}

\subsection{Orbifolds} \label{sub:Riem_orb}
For a definition of a (smooth) orbifold we refer to \cite{MR1744486} or \cite{MR2883685}. A Riemannian orbifold can be defined as follows.
\begin{dfn}\label{dfn:orbifold} An \emph{$n$-dimensional Riemannian orbifold} $\Orb$ is a length space such that for each point $x \in \Orb$, there exists a neighborhood $U$ of $x$ in $\Orb$, an $n$-dimensional Riemannian manifold $M$ and a finite group $\Gamma$ that acts by isometries on $M$ such that $U$ with the restricted metric and $M/\Gamma$ with the quotient metric are isometric.
\end{dfn}
A \emph{length space} is a metric space in which the distance of any two points can be realized as the infimum of the lengths of all rectifiable paths connecting these points \cite{MR1835418}. Behind the above definition lies the fact that an effective isometric action of a finite group on a simply connected Riemannian manifold can be recovered from the corresponding metric quotient. In the case of spheres this is proven in \cite{MR1935486}; the general case follows in a similar way (cf. e.g. \cite{Lange}). In particular, a Riemannian orbifold in the above sense admits a smooth orbifold structure and a compatible Riemannian structure that in turn induces the metric structure. For a point $x$ on a Riemannian orbifold, the isotropy group of a preimage of $x$ in a Riemannian manifold chart is uniquely determined up to conjugation. Its conjugacy class in $\Or(n)$ is denoted as $\Gamma_x$ and is called the \emph{local group} of $\Orb$ at $x$. Riemannian orbifolds are stratified by manifolds. The $k$-dimensional stratum consists of those points $x \in \Orb$ for which $\mathrm{dim}(\mathrm{Fix}(\Gamma_x))=k$.

The underlying topological space $|\Orb|$ of a $2$-orbifold $\Orb$ is a manifold with boundary. In this case the orbifold $\Orb$ is \emph{orientable} if and only if $|\Orb|$ is an orientable surface without boundary. A $2$-orbifold can have three types of singularities. Isolated singularities in the interior of $|\Orb|$ whose local groups are cyclic and orientation preserving, mirror singularities on the boundary of $|\Orb|$ whose local groups are generated by a reflection, and corner-reflector singularities on the boundary of $|\Orb|$ whose local groups are dihedral groups generated by two distinct reflections. The closure of the $1$-dimensional stratum is the boundary of $|\Orb|$ and consists of the mirror- and the corner-reflector singularities. We denote a $2$-orbifold $\Orb$ with $l$ isolated singularities in the interior of $|\Orb|$ (whose local group are) of order $n_1,\ldots,n_l$ and $k$ isolated corner-reflector singularities on the boundary of $|\Orb|$ whose local groups are dihedral of order $2m_1,\ldots,2m_k$ by $\Orb=|\Orb|(n_1,\ldots,n_l;m_1,\ldots,m_k)$. If the boundary of $|\Orb|$ is empty we simply write $\Orb=|\Orb|(n_1,\ldots,n_l)$. We will use the following conventions. If two Riemannian orbifolds $\Orb$ and $\Orb'$ are isometric, we write $\Orb=\Orb'$. If two (smooth) orbifolds are isomorphic, we write $\Orb\cong \Orb'$. For $2$-dimensional orbifolds $\Orb$ and $\Orb'$ we have $\Orb\cong \Orb'$, if and only if the underlying topological spaces of $\Orb$ and $\Orb'$ are homeomorphic and the numerical invariants associated to $\Orb$ and $\Orb'$ above coincide. 

The metric quotient of a Riemannian $2$-orbifold by a finite group of isometries is again a Riemannian $2$-orbifold. If $\Orb$ is a Riemannian $2$-orbifold with non-empty $1$-dimensional stratum, then its \emph{metric double} $\hat \Orb$ along the closure of this stratum (see \cite[Def.~3.1.12]{MR1835418}) is a Riemannian orbifold with isolated singularities. Moreover, in this case  the natural involution of $\hat \Orb$ is an isometry with quotient $\Orb$. 

We are interested in (orbifold) geodesics in the following sense.
\begin{dfn}\label{dfn:geodesics} A \emph{geodesic} on a Riemannian orbifold is a continuous path that can locally be lifted to a geodesic in a Riemannian manifold chart. A \emph{closed geodesic} is a continuous loop that is a geodesic on each subinterval. A \emph{prime geodesic} is a closed geodesic that is not a concatenation of nontrivial closed geodesics. 
\end{dfn}

In particular, we are interested in Riemannian orbifold metrics all of whose geodesics ``are closed'', i.e. factor through closed geodesics. We call such metrics \emph{Besse}. We call a Riemannian orbifold whose metric is Besse a \emph{Besse orbifold}.
\begin{dfn}\label{dfn:periods} By the \emph{period} of a closed geodesic we mean its length as a parametrized curve.
\end{dfn}
We stress this definition since the length of a geodesic on a Riemannian orbifold as a parametrized curve may differ from the length of its geometric image. To illustrate this and to provide some familiarity with geodesics on a Riemannian $2$-orbifold $\Orb$ let us summarize some of their properties. In the regular part geodesics behave like ordinary geodesics in a Riemannian manifold. A geodesic hitting an isolated singularity is either reflected or goes straight through it depending on whether the order of the singularity is even or odd. In particular, a closed geodesic that hits an isolated singularity of even order passes its trajectory twice during a single period. Moreover, we see that an orbifold geodesic is in general not locally length minimizing. However, on the other hand, a locally length minimizing path, which has to lie completely in either the 1- or the 2-dimensional stratum, is locally covered by length minimizing paths in Riemannian manifold charts and hence an orbifold geodesic. Suppose the $1$-dimensional stratum of $\Orb$ is non-empty and let $\hat \Orb$ be the metric double of $\Orb$ along the closure of this stratum. Then a geodesic hitting the topological boundary of $\Orb$ continues as the projection to $\Orb$ of its continuation in $\hat \Orb$. In particular, a geodesic hitting the $1$-dimensional stratum is reflected according to the usual reflection law. A geodesic that remains in the closure of the $1$-dimensional stratum can never leave it.

We need the following concept.

\begin{dfn} A \emph{covering orbifold} of a Riemannian orbifold $\Orb$ is a Riemannian orbifold $\Orb'$ together with a surjective map $\varphi : \Orb' \To \Orb$ such that each point $x\in \Orb$ has a neighborhood $U$ isometric to some $M/G$ for which each connected component $U_i$ of $\varphi^{-1}(U)$ is isometric to $M/G_i$ for some subgroup $G_i<G$ such that the isometries respect the projections.
\end{dfn}
\begin{figure}
	\centering
		\includegraphics[width=0.35\textwidth]{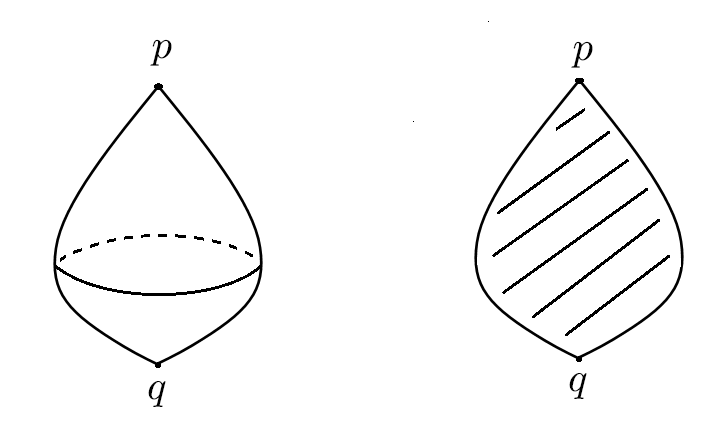}
	\caption{Left: A \emph{$(p,q)$-spindle} orbifold $S^2(p,q)$. Right: A \emph{$(p,q)$-half-spindle} orbifold $D^2(;p,q)$ (no assumptions on $p$ and $q$). $(p,1)$-spindle orbifolds are also known as \emph{teardrops}. The orbifolds in the picture are bad if and only if $p\neq q$.}
	\label{spindle}
\end{figure}
For instance, if a finite group $G$ acts isometrically on a Riemannian $2$-orbifold $\Orb$, then the projection $\Orb \To \Orb /G$ is a covering of Riemannian orbifolds. Thurston showed that the theory of covering spaces (and fundamental groups) works for orbifolds analogously as for manifolds \cite[Sec.~1.2.2]{MR2883685}, see \cite{Thurston} and e.g. \cite[Sec.~2.2]{MR2060653}.

Note that a finite covering orbifold of a Besse orbifold is itself Besse. In particular, the metric double cover of a Besse $2$-orbifold with mirror singularities is a Besse $2$-orbifold with isolated singularities. An orbifold is called \emph{good} (or \emph{developable}) if it is covered by a manifold, and otherwise it is called \emph{bad} \cite{Thurston}. The only bad $2$-orbifolds are depicted in Figure \ref{spindle} \cite[Thm.~2.5]{MR0705527}.

For orbifolds an Euler characteristic can be defined that is multiplicative under coverings and coincides with the usual Euler characteristic in the manifold case \cite{MR2883685}. A $2$-orbifold has positive Euler characteristic if and only if it is either bad or \emph{spherical}, i.e. a quotient of $S^2$ by a finite subgroup of $\Or(3)$. All spherical $2$-orbifolds are listed in Table \ref{tab:gls} for $p=q$ (cf. \cite{MR2883685}). A detailed description of the corresponding finite subgroups of $\Or(3)$ can for instance be found in \cite{LaLi}. Spherical $2$-orbifolds inherit a standard Besse metric from $S^2$. In Proposition \ref{prp:admit_Besse} we will see that a $2$-orbifold admits a Besse metric if and only if its orbifold Euler characteristic is positive. The $2$-orbifolds with positive Euler characteristic are listed in Table \ref{tab:gls}.

\subsection{Besse metrics on $2$-orbifolds} \label{sub:P-metrics_on_spindle}

In \cite{MR2529473} a Besse $(p,q)$-spindle orbifold (cf. Figure \ref{spindle}) is constructed for integers $p$ and $q$ with $\mathrm{gcd}(p,q)=1$ and $1<p<q$ as follows. Consider the action
\[
	\begin{array}{cccl}
		  & S^1 \times S^3				& \LTo  	& S^3 \\
		         & (z,(z_1,z_2)) 	& \LMTo  & (z^p z_1,z^q z_2).
	\end{array}
\]
The quotient $S^3/S^1$ is a Besse $(p,q)$-spindle orbifold. The geodesics on $S^3/S^1$ are precisely the projections of \emph{horizontal geodesics} on $S^3$, i.e. geodesics that are orthogonal to the $S^1$-orbits. In \cite[Thm.~3.6]{MR2529473} the lengths of the geodesic's trajectories on the quotient $S^3/S^1$ are computed. The difference to our result in the case that $p+q$ is odd is due to the fact that the length of a geodesic's trajectory may differ from its length as a parametrized curve as explained in the preceding section. 

Another construction of Besse metrics on $(p,q)$-spindle orbifolds for arbitrary $p$ and $q$ is similar to the construction of non-standard Zoll metrics on $S^2$ due to Tannery and Zoll \cite{MR0496885,MR1511201}. Let $h: [-1,1] \To (- \frac{p+q}{2},\frac{p+q}{2})$ be a smooth, odd function with $h(1)=\frac{p-q}{2}=-h(-1)$ and let a Riemannian metric on $X=(0,\pi)\times ([0,2\pi]/0 \sim 2\pi) \ni (\theta,\phi)$ be defined by
\[
			g=\left( \frac{p+q}{2}+h(\cos(\theta))\right)^2 d\theta^2 + \sin^2(\theta) d\phi^2.
\]
Then the metric completion of $(X,g)$ is a Besse $(p,q)$-spindle orbifold; see \cite[Thm.~4.13]{MR0496885} and note that $p$ and $q$ are defined differently therein. Since the metric is invariant under a reflection in $\phi \in ([0,2\pi]/0 \sim 2\pi)$, it descends to a Besse metric on the corresponding quotient. Hence, we have

\begin{prp}\label{prp:existence_Besse} Every bad $2$-orbifold and every spherical $2$-orbifold, that is, every $2$-orbifold with positive Euler characteristic, admits a Besse metric.
\end{prp}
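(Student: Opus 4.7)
The plan is to split the statement into two independent parts: (a) the implication ``bad or spherical'' $\Rightarrow$ ``admits a Besse metric'', and (b) the equivalence ``bad or spherical'' $\Leftrightarrow$ ``positive orbifold Euler characteristic''. Part (b) is the classical Thurston/Gauss--Bonnet classification of closed $2$-orbifolds, which via the standard explicit enumeration of orbifolds with $\chi_{orb}>0$ yields precisely the entries compiled in Table \ref{tab:gls}; each of those is by inspection either bad (the spindles $S^2(p,q)$ and half-spindles $D^2(;p,q)$ with $p \neq q$) or spherical. So I would simply cite this classification and concentrate on (a).

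For (a) I would first handle the spherical case, which is essentially immediate. By the definition given in Section \ref{sub:Riem_orb}, a spherical $2$-orbifold is a metric quotient $S^2/G$ with $G < \Or(3)$ finite. Every geodesic on the round $S^2$ is a great circle of period $2\pi$, and an orbifold geodesic on $S^2/G$ lifts locally (hence, by prolongation, globally on the universal cover) to a geodesic of $S^2$. Consequently every geodesic on $S^2/G$ is closed, and the inherited metric is Besse.

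For the bad case, I would invoke the fact cited just before the proposition (\cite[Thm.~2.5]{MR0705527}) that the only bad $2$-orbifolds are the spindles $S^2(p,q)$ and the half-spindles $D^2(;p,q)$ with $p \neq q$. The Tannery--Zoll-type construction displayed immediately above the proposition already exhibits an explicit Besse metric on every $S^2(p,q)$ for arbitrary $p$ and $q$. To obtain the half-spindle case, I would use the reflection $\phi \mapsto 2\pi - \phi$: the metric $g = (\tfrac{p+q}{2}+h(\cos\theta))^2 d\theta^2 + \sin^2\theta\, d\phi^2$ is manifestly invariant under this involution, whose metric quotient is the half-spindle $D^2(;p,q)$. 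Since a finite metric quotient of a Besse orbifold by an isometric group action is again Besse, as already noted in the discussion of covering orbifolds in Section \ref{sub:Riem_orb}, this gives the desired Besse metric on $D^2(;p,q)$.

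The only point that requires any care is verifying that the Tannery--Zoll metric indeed produces cone points of the prescribed orders $p$ and $q$ at $\theta = 0$ and $\theta = \pi$, and that the resulting length space is a Riemannian orbifold in the sense of Definition \ref{dfn:orbifold}; this is the role of the boundary conditions on $h$ together with the normalization of the second term, and is precisely the content of the referenced \cite[Thm.~4.13]{MR0496885}. Beyond that, the proof is an assembly of the classification with the two explicit constructions already in place, so I do not anticipate any genuinely new obstacle.
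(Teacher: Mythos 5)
Your proposal is correct and follows essentially the same route as the paper: the round metric on $S^2/G$ handles the spherical case, the Tannery--Zoll construction handles the spindles $S^2(p,q)$, and the reflection in $\phi$ descends that metric to the half-spindles $D^2(;p,q)$, with the equivalence to positive Euler characteristic cited from the classification. The extra care you flag (lifting geodesics on $S^2/G$ to great circles, and the cone angles produced by the boundary conditions on $h$) is exactly what the paper delegates to the cited references, so there is nothing to add.
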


Another method to construct Besse metrics on $2$-spheres is due to Guillemin \cite{MR0426063}. He shows that for any odd function $\sigma$ on the standard round sphere $(S^2,g_0)$ there exists a
one-parameter family of smooth functions $\rho_t$ on $S^2$ such that $\rho_0=1$, $d\rho_t/dt=\sigma$ at $t=0$ and such that $\mathrm{exp}(\rho_t)g_0$ is a Besse metric for small $t$. Note that if $G< \Or(3)$ does not contain minus the identity, then $\sigma$ can be chosen to be non-trivial and $G$-equivariant.

On spherical orbifolds with more than two isolated singular points only the round Besse metrics seem to be known. However, in view of the result in \cite{MR0426063} we believe in the following 

\begin{conj} Let $G<\Or(3)$. If $S^2/G$ is not covered by the real projective plane, i.e. if $-1 \notin G$, then the moduli space of Besse metrics on $S^2/G$ is infinite-dimensional.
\end{conj}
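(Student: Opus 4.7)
The natural approach is to adapt Guillemin's construction from \cite{MR0426063} in a $G$-equivariant fashion. Given an odd smooth function $\sigma$ on $(S^2, g_0)$, Guillemin produces a one-parameter family of Besse metrics $\exp(\rho_t) g_0$ with $\rho_0 = 1$ and $d\rho_t/dt|_{t=0} = \sigma$, and moreover identifies the tangent space to the moduli of Zoll metrics at $g_0$ with the space of odd functions modulo finite-dimensional trivial deformations. Since $G$ acts by isometries on $(S^2, g_0)$ and the Besse condition is natural under diffeomorphisms, starting from a $G$-invariant $\sigma$ one expects Guillemin's obstruction/implicit-function-theorem argument to be executable in the $G$-invariant category, so that $\rho_t$ is itself $G$-invariant and the resulting metric descends to a Besse metric on the quotient orbifold $S^2/G$.

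The second step is to show that the space of odd $G$-invariant smooth functions on $S^2$ is infinite-dimensional precisely when $-1 \notin G$. The antipodal map $\tau\colon x\mapsto -x$ lies in the center of $\Or(3)$, so pull-back by $\tau$ commutes with pull-back by any $g \in G$, and in particular averaging over $G$ preserves parity. If $-1 \in G$, then $G$-invariance forces $\sigma(-x)=\sigma(x)$ while oddness forces $\sigma(-x)=-\sigma(x)$, giving $\sigma\equiv 0$. If instead $-1 \notin G$, decompose $L^2(S^2)=\bigoplus_{d\ge 0}H_d$ into spherical harmonics; using that $-1$ acts on $H_d$ by $(-1)^d$, a short character computation shows that $\dim H_d^G$ grows linearly in $d$ along odd $d$ (the identity contributes $2d+1$ and the remaining terms are uniformly bounded), so $\bigoplus_{d \text{ odd}} H_d^G$ is infinite-dimensional.

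Applying the equivariant Guillemin deformation theorem to this infinite-dimensional space of admissible directions would then yield an infinite-dimensional moduli space of Besse metrics on $S^2/G$ modulo orbifold isometry and scaling, since the orbit of the finite-dimensional isometry-and-homothety group of the round orbifold metric, spanned by $G$-equivariant conformal Killing fields and constants, cannot absorb an infinite-dimensional tangent space of odd $G$-invariant infinitesimal deformations.

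The step I expect to be the main obstacle is the first: reliably carrying out Guillemin's delicate obstruction theory and implicit function theorem in the $G$-equivariant/orbifold setting while retaining smoothness of the entire family $\rho_t$ and ensuring no new obstructions arise from the quotient geometry at the isolated singular points. The dimension counts of the second and third steps are, by contrast, essentially formal once an equivariant Guillemin deformation theorem is in hand; this is consistent with the paper's remark that the conjecture is believed precisely \emph{in view of} Guillemin's result, the missing ingredient being its equivariant incarnation.
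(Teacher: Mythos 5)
This statement is not proved in the paper: it is stated explicitly as a \emph{Conjecture}, and the only supporting evidence the paper offers is exactly the heuristic you reproduce, namely Guillemin's theorem that every odd function $\sigma$ on $(S^2,g_0)$ is tangent to a one-parameter family of Zoll metrics $\exp(\rho_t)g_0$, together with the remark that for $-1\notin G$ one can choose $\sigma$ nontrivial and $G$-equivariant. So there is no proof in the paper to compare yours against, and your proposal should be judged as an attempt at an open problem.

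As such an attempt it has a genuine gap, and it is the one you yourself flag: an equivariant (and quotient-compatible) version of Guillemin's deformation theorem is not available off the shelf, and without it the construction does not even produce a single non-round Besse metric on $S^2/G$ when $G$ has more than two singular points on the quotient --- the paper explicitly notes that in that case ``only the round Besse metrics seem to be known.'' Your second step is fine and essentially the same parity/character observation the paper makes ($-1$ acts on $H_d$ by $(-1)^d$, so odd $G$-invariant functions exist in abundance iff $-1\notin G$). But there is a second, independent gap in your third step: Guillemin's result gives one curve of Zoll metrics per direction $\sigma$, not a manifold structure on the space of Besse metrics, so ``infinitely many admissible tangent directions'' does not by itself yield an infinite-dimensional moduli space --- one needs either a simultaneous (multi-parameter) integration of these directions or an argument that no finite-dimensional family of Besse metrics can realize all of them, neither of which is formal. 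In short: your outline is a reasonable roadmap, and it matches the paper's stated motivation for the conjecture, but both the equivariant deformation theory and the passage from infinitesimal to actual moduli remain unproved, which is precisely why the paper records this as a conjecture rather than a proposition.
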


\subsection{$2$-orbifolds that admit Besse metrics} \label{sub:admitP}

In \cite{MR0400257} Wadsley proves the following result (cf. \cite[Thm.~7.12]{MR0496885}).
\begin{thmw}[Wadsley] If the orbits of a flow on a Riemannian manifold are periodic geodesics parametrized by arc-length, then the flow itself is periodic, so that the orbits have a common period.
\end{thmw}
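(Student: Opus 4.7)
The plan is to exploit the dual one-form of the generating vector field, which reduces Wadsley's theorem to an application of Stokes' theorem. Let $X$ denote the vector field generating the flow on $(M,g)$. By hypothesis $|X|\equiv 1$, and since each orbit is an arc-length parametrized geodesic, $\nabla_X X = 0$. Define $\omega\in\Omega^1(M)$ by $\omega(\cdot) := g(X,\cdot)$, so $\omega(X)\equiv 1$. Using $d\omega(X,Y) = X\omega(Y)-Y\omega(X)-\omega([X,Y])$ together with $|X|\equiv 1$ and $\nabla_X X=0$, a direct computation gives the crucial identity
\[
d\omega(X,Y) \;=\; g(X,\nabla_Y X) \;=\; \tfrac{1}{2}\, Y(g(X,X)) \;=\; 0,
\]
that is, $\iota_X\, d\omega = 0$.

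For an orbit $\gamma_x:[0,T(x)]\to M$ traversed once, $\int_{\gamma_x}\omega = \int_0^{T(x)}1\,dt = T(x)$, where $T(x)$ is the minimal period. The central step is then the following bordism argument: given two close points $x_0,x_1$ joined by a short transverse arc $\alpha$, form the flow-saturated annulus
\[
\Sigma \;=\; \bigl\{\phi_t(\alpha(s)) : s\in[0,1],\ t\in[0,T(\alpha(s))]\bigr\},
\]
whose oriented boundary is $\gamma_{x_0}-\gamma_{x_1}$ (the two copies of $\alpha$ appearing in $\partial\Sigma$ cancel since each orbit returns to its base point). Since $X$ is tangent to $\Sigma$ everywhere and $\iota_X d\omega=0$, the pullback of $d\omega$ to $\Sigma$ vanishes, and Stokes' theorem yields
\[
T(x_0) - T(x_1) \;=\; \int_{\gamma_{x_0}}\omega - \int_{\gamma_{x_1}}\omega \;=\; \int_\Sigma d\omega \;=\; 0.
\]
Thus $T$ is locally constant, hence constant on each connected component of $M$, and this constant is a common period of the flow.

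The main obstacle is making $\Sigma$ into a genuine smooth bordism, which requires continuity of the period function $T$ along $\alpha$. Lower semi-continuity of $T$ is immediate: if $x_n\to x$ and $T(x_n)\to T^*$, continuity of the flow gives $\phi_{T^*}(x)=x$, so $T(x)$ divides $T^*$. Upper semi-continuity is more delicate, since a priori nearby orbits might close only after wrapping many times. To handle this I would pass to a small transverse disk $D$ at $x_0$ and study the first-return map $R:D\to D$, whose orbits are all finite by hypothesis; a Newman-type theorem then bounds these return-orders uniformly in a neighborhood of $x_0$, yielding local boundedness and hence continuity of $T$. Combined with the Stokes computation above, this closes the argument and produces the common period.
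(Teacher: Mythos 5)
First, a remark on the comparison itself: the paper does not prove this statement. It is quoted verbatim from Wadsley [Wa75] (cf.\ Besse [Be78, Thm.~7.12]) and used as a black box, so the benchmark is the known proof rather than anything internal to the paper. Your first half does reproduce the standard core of that proof: the identity $\iota_X\,d\omega=0$ for $\omega=g(X,\cdot)$ is computed correctly, $\int_{\gamma_x}\omega=T(x)$, and the Stokes computation over the flow-saturated cylinder is exactly the mechanism by which the common period is extracted \emph{once one knows that the minimal-period function $T$ is continuous}.

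The genuine gap is in the continuity step, and it sits at the load-bearing point of the theorem. You propose to get local boundedness of $T$ by applying ``a Newman-type theorem'' to the first-return map $R$ on a transverse disk. Newman's theorem (and Montgomery's theorem that a pointwise periodic homeomorphism of a connected manifold is periodic) applies to a globally defined self-homeomorphism, respectively to an action of a compact group; the first-return map of a flow is only a partially defined germ near $x_0$, pointwise periodic in the sense that each of its orbits is finite, and this hypothesis does \emph{not} imply that the orders are locally bounded. Indeed, the assertion you are after --- that a flow all of whose orbits are circles has locally bounded periods --- is precisely the ``periodic orbit conjecture'', which is false: Sullivan constructed a smooth flow on a compact $5$-manifold all of whose orbits are circles but whose period function is unbounded, and Epstein--Vogt gave a real-analytic example on a compact $4$-manifold. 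In those examples the return maps are pointwise periodic with unbounded order arbitrarily close to some point, so no ``Newman-type'' statement of the kind you invoke can hold. Consequently the geodesic hypothesis --- equivalently, the existence of the $1$-form $\omega$ with $\omega(X)=1$ and $\iota_X\,d\omega=0$ --- must enter the proof of local boundedness itself, not merely the Stokes computation afterwards; this is exactly what Wadsley's argument (and Sullivan's homological reformulation of it) does, and it is the genuinely hard part of the theorem. A secondary, smaller issue: even granting continuity of $T$, the upper boundary $t=T(\alpha(s))$ of your parameter region is a priori only continuous, so one should either smooth it or first upgrade continuity to local constancy of $T$ (using lower semicontinuity together with the integrality relation $T^*\in T(x)\mathbb{Z}$) before applying Stokes.
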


Using Wadsley's theorem we can prove the following property of Besse $2$-orbifolds.

\begin{prp}\label{prp:Besse_compact} A Besse $2$-orbifold is compact.
\end{prp}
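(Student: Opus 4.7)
The plan is to apply Wadsley's theorem to bound the diameter and then deduce compactness via completeness. Since every geodesic on a Besse orbifold $\Orb$ is closed, all geodesics are defined for all time, so $\Orb$ is geodesically complete; by the orbifold version of the Hopf--Rinow theorem it is metrically complete, and any two points can be joined by a minimizing geodesic. Hence it is enough to show that $\Orb$ has finite diameter.

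To apply Wadsley's theorem, which is stated for Riemannian manifolds, I would first reduce to the case of isolated singularities. If the $1$-dimensional stratum of $\Orb$ is non-empty, pass to the metric double $\hat \Orb$ along the closure of this stratum. This is a $2$-fold covering, hence (as remarked in Section~\ref{sub:Riem_orb}) again a Besse orbifold, and it has only isolated singularities. Since $\Orb$ is the metric quotient of $\hat \Orb$ by the natural involution, compactness of $\hat \Orb$ implies compactness of $\Orb$. So I may assume $\Orb$ has only isolated singularities.

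In this case the unit tangent bundle $T^1\Orb$ is a manifold: in a manifold chart $M$ the local group is a finite cyclic subgroup of $\SOr(2)$, and its nontrivial rotations act freely on the fiber $S^1$ at the fixed point, so the diagonal action on $T^1M$ is free. The geodesic flow descends from $T^1M$ to a smooth flow on $T^1\Orb$ whose orbits are precisely the lifts of the closed geodesics of $\Orb$, parametrized by arc length. Wadsley's theorem then gives a common period $L>0$; in particular every closed geodesic of $\Orb$ has length at most $L$ as a parametrized curve. For any two points $p,q\in\Orb$, a minimizing geodesic from $p$ to $q$ extends to such a closed geodesic, whence $d(p,q)\le L/2$. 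Thus $\Orb$ has diameter at most $L/2$ and, being complete, is compact.

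The main obstacle is the passage between the orbifold and the manifold setting required by Wadsley's theorem: one must verify that $T^1\Orb$ is genuinely a manifold and that the geodesic flow is a smooth periodic flow on it, which in turn forces the preliminary reduction to isolated singularities via the metric double, since at mirror or corner-reflector points the local group fails to act freely on the unit tangent circle.
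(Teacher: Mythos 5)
Your proof follows essentially the same route as the paper's: reduce to isolated singularities via the metric double, apply Wadsley's theorem to the geodesic flow on $T^1\Orb$ to obtain a uniform bound on the periods, and combine this with Hopf--Rinow completeness to conclude compactness. The one hypothesis of Wadsley's theorem you do not verify is that the orbits of the flow are geodesics, parametrized by arc length, of a Riemannian metric on the manifold carrying the flow, namely on $T^1\Orb$ itself --- being arc-length lifts of geodesics of $\Orb$ is not literally the hypothesis. The paper closes this by endowing $T^1\Orb$ with the Sasaki metric, with respect to which the integral curves of the geodesic field are geodesics of $T^1\Orb$ that project to the geodesics of $\Orb$ with the same period. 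With that one sentence added your argument coincides with the paper's; your extra observations (the explicit bound $\mathrm{diam}(\Orb)\le L/2$ via the complementary arc, and the freeness of the cyclic local groups on the unit tangent circle) are correct but go slightly beyond what the paper records.
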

\begin{proof} First suppose that $\Orb$ is a Besse $2$-orbifold with isolated singularities. Then the unit tangent bundle $M=T^1 \Orb$ is an orientable manifold \cite{MR2359514}. It inherits an orientation and a natural Riemannian metric (Sasaki metric) from $\Orb$ \cite[Ch.~1.K]{MR0496885}. With respect to this metric the integral curves of the geodesic field on $M$ are geodesics that project to the geodesics on $\Orb$ with the same arc-length parametrization and the same period. All the geodesics on $\Orb$ can be obtained in this way, ibid. It follows from Wadsley's result that these integral curves on $M$ have a common period, say $l$, and thus so have the geodesics on $\Orb$.

Since $\Orb$ is geodesically complete as an orbifold by the Besse condition, every length minimizing path $\gamma:[0,a)\To \Orb$, which is also an orbifold geodesic, can be extended to a continuous path $\overline{\gamma}:[0,a]\To \Orb$. Therefore $\Orb$ is complete by the Hopf-Rinow theorem \cite[Thm.~2.5.28]{MR1835418} and every pair of points on $\Orb$ can be connected by a (minimizing and hence orbifold) geodesic \cite[Thm.~2.5.23]{MR1835418}. It follows that $\mathrm{diam}(\Orb)\leq l$ and thus that $\Orb$ is compact \cite[Thm.~2.5.28, $(ii)$]{MR1835418}. For a Besse $2$-orbifold $\Orb$ whose singular points are not isolated the same argument applies to its metric double, which is a Besse $2$-orbifold with isolated singularities. Hence, in this case $\Orb$ is compact as the continuous image of its double.
\end{proof}

Now we can prove the following characterization of $2$-orbifolds that admit Besse metrics.

\begin{prp}\label{prp:admit_Besse} A $2$-orbifold admits a Besse metric if and only if it is either bad or spherical, i.e. if and only if its orbifold Euler characteristic is positive.
\end{prp}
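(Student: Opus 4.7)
My plan is to prove only the necessity direction, since sufficiency is Proposition \ref{prp:existence_Besse}. Suppose $\Orb$ is a Besse $2$-orbifold; the goal is to show $\chi^{\mathrm{orb}}(\Orb) > 0$. Bad $2$-orbifolds have positive orbifold Euler characteristic by direct computation from the list in Figure \ref{spindle}, so I restrict to the good case. If $\Orb$ has a non-empty one-dimensional stratum, I pass to the metric double $\hat\Orb$, which is a Besse $2$-orbifold with only isolated singularities satisfying $\chi^{\mathrm{orb}}(\hat\Orb) = 2\chi^{\mathrm{orb}}(\Orb)$, so I may assume $\Orb$ has only isolated singularities. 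By Selberg's lemma a good compact $2$-orbifold admits a finite manifold cover $\Sigma$, which inherits a Besse metric, and since $\chi^{\mathrm{orb}}(\Orb) = \chi(\Sigma)/n$ for the degree $n$ of the cover, the task reduces to showing that a closed Besse $2$-manifold $\Sigma$ must have finite fundamental group, hence be diffeomorphic to $S^2$ or $\RP^2$.

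The key step is a Jacobi-field computation on the universal cover. As in the proof of Proposition \ref{prp:Besse_compact}, applying Wadsley's theorem to the Sasaki metric on $T^1\Sigma$ produces a common period $l$ of all geodesics, so the time-$l$ map $\phi_l$ of the geodesic flow on $T^1\Sigma$ equals the identity. Lifting to the universal cover $\tilde\Sigma$ with the pulled-back metric, $\phi_l$ on $T^1\tilde\Sigma$ projects to the identity on $T^1\Sigma$, so $\phi_l(v) = \psi_v \cdot v$ for some deck transformation $\psi_v$ of $\tilde\Sigma \to \Sigma$. Since $v \mapsto \psi_v$ is continuous into the discrete deck group and $T^1\tilde\Sigma$ is connected, $\psi_v \equiv \psi$ is independent of $v$. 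Equivalently, $\exp_p(lv) = \psi(p)$ for every $p \in \tilde\Sigma$ and every unit vector $v \in T_p\tilde\Sigma$.

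Differentiating this identity along a smooth path $v(s)$ in the unit sphere $T^1_p\tilde\Sigma$ with $v(0) = v$ and $v'(0) = w \perp v$ gives $d(\exp_p)_{lv}(lw) = 0$. Therefore the Jacobi field $J$ along $\gamma_v$ characterized by $J(0) = 0$ and $J'(0) = w$ satisfies $J(l) = 0$, so the first conjugate time along $\gamma_v$ is at most $l$. Since $p$ and $v$ are arbitrary, no geodesic on $\tilde\Sigma$ is length-minimizing past time $l$, and so $\mathrm{diam}(\tilde\Sigma) \leq l$. Completeness (lifted from $\Sigma$) together with Hopf--Rinow then forces $\tilde\Sigma$ to be compact, so $\pi_1(\Sigma)$ is finite, completing the proof.

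The main obstacle is the identification of $\phi_l$ on $T^1\tilde\Sigma$ with a single deck transformation $\psi$, which requires invoking continuity of the flow, connectedness of $T^1\tilde\Sigma$, and discreteness of the deck group in tandem. Once that identification is established, the conjugate-point consequence via the Jacobi equation and the resulting diameter bound on $\tilde\Sigma$ are a direct computation.
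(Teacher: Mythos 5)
Your argument is correct and follows the same overall route as the paper's proof: dispose of sufficiency via Proposition \ref{prp:existence_Besse}, reduce to the good case, pass to a finite manifold cover, and conclude from finiteness of the fundamental group of a closed Besse surface. The one genuine difference is the last step: the paper simply cites \cite[Thm.~7.37]{MR0496885} for the finiteness of $\pi_1$ of a Besse manifold, whereas you prove it from scratch by identifying the time-$l$ geodesic flow on $T^1\tilde\Sigma$ with a single deck transformation and extracting a vanishing Jacobi field; this is the standard Bott--Samelson-type argument, it is carried out correctly, and it makes your write-up self-contained where the paper's is not. Two smaller remarks. First, you must know that $\Orb$ is compact before invoking the existence of a finite manifold cover of a good \emph{compact} $2$-orbifold; this is exactly Proposition \ref{prp:Besse_compact}, which you allude to later but should invoke explicitly at the outset. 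Second, the paper obtains very-goodness from \cite[Thm.~2.5]{MR0705527} rather than from Selberg's lemma; to apply Selberg one first needs the orbifold fundamental group to be a finitely generated linear group, which for a good compact $2$-orbifold essentially already requires the geometrization statement that Scott's theorem provides, so the paper's citation is the more economical (and less circular-looking) choice. Neither point affects the validity of your proof.
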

\begin{proof} By Proposition \ref{prp:existence_Besse} it remains to prove the only if direction. So let $\Orb$ be a Besse $2$-orbifold. By Proposition \ref{prp:Besse_compact} $\Orb$ is compact. If it is also good, then it is in fact finitely covered by a Besse manifold \cite[Thm.~2.5]{MR0705527}. Since the fundamental group of a Besse manifold is finite \cite[Thm.~7.37]{MR0496885}, $\Orb$ must be spherical in this case.
\end{proof}

In the following three sections we recall some facts that will be needed later. The reader may proceed to Section \ref{sub:orb_of_geo} on first reading and come back to these sections on demand.

\subsection{Almost free circle actions on 3-manifolds and Seifert fiber spaces}
\label{sub:Seif}

Suppose we have a smooth, almost free (i.e. isotropy groups are finite) $S^1$-action on an orientable closed $3$-manifold $M$. Then the orbits are circles and define a decomposition of $M$ into so-called \emph{fibers}. If some element of $S^1$ fixes a point on a fiber, then it fixes the fiber pointwise. A fiber is called \emph{exceptional (or singular) of order $k\geq 2$} if its isotropy subgroup of $S^1$ has order $k$. Since $S^1$ is compact, there exists an $S^1$-invariant Riemannian metric on $M$. The metric quotient $M/S^1$ is an orientable Riemannian $2$-orbifold with isolated singularities and (the orders of) the exceptional fibers correspond to (the orders of) the singularities of $M/S^1$ (cf. \cite{MR2719410}). The manifold $M$ together with a chosen orientation and its decomposition into fibers defines a \emph{Seifert fiber space} of type $o_1$, or Oo in Seifert's original notation \cite{MR1555366}, meaning that both the space and the base are orientable. For the definition of a general Seifert fiber space we refer to \cite{MR1555366}. Roughly speaking it is a closed $3$-manifold together with a decomposition $\mathcal{F}$ into $S^1$-fibers which are, however, only locally defined by an $S^1$-action. Conversely, any Seifert fiber space of type $o_1$ can be obtained in the above way \cite[Ch.~2]{MR0741334}. 
\begin{rem}\label{rem:compt_orientation} The fibers of a Seifert fiber space of type $o_1$ can be oriented in a continuous way. When we speak about orientations of the fibers, we always mean such a continuous choice.
\end{rem}
A Seifert fiber space (of type $o_1)$ is uniquely determined by a finite number of numerical invariants up to orientation- and fiber-preserving diffeomorphism \cite[Thm.~1.5]{MR0741334}. Two sets of invariants determine the same Seifert fiber space, if and only if they are related as described in \cite[Thm.~1.5]{MR0741334}. Forgetting about the orientation of $M$ and allowing general fiber-preserving diffeomorphisms amounts to enlarging the equivalence relation on the set of numerical invariants \cite[Cor.~1.7]{MR0741334}. In \cite[(6.1)]{MR0219086} it is shown that two $S^1$-actions on $M$ define the same Seifert fiber space up to orientation, if and only if there exists a diffeomorphism $h$ of $M$ and an automorphism $a$ of $S^1$ such that for all $m\in M$ and $g \in S^1$ the relation $h(gm)=a(g)h(m)$ holds. In this case it can be shown that this automorphism can in fact be chosen to be the identity \cite[pp.~12-13]{MR0741334}. A specific diffeomorphism that conjugates the $S^1$-actions occurring in this paper to their inverse actions is given in Section \ref{sub:orb_of_geo} (namely by the map $i:T^1 \Orb  \To T^1 \Orb$).  Hence, we have

\begin{lem} \label{lem:seifert_classif}
The classification of smooth, almost free $S^1$-actions on $M$ up to conjugation by diffeomorphisms coincides with the classification of Seifert fiberings on $M$ of type $o_1$ up to orientation.
\end{lem}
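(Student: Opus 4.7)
The plan is to verify both directions of the stated equivalence using the two facts collected in the paragraph immediately preceding the lemma: (6.1) of \cite{MR0219086}, which identifies Seifert equivalence up to orientation of $M$ with ``$S^1$-equivariance up to an automorphism of $S^1$,'' and the observation on pp.~12--13 of \cite{MR0741334} that this automorphism can always be taken to be the identity. No further geometric input should be needed.

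For the direction ``conjugate $S^1$-actions $\Rightarrow$ equivalent Seifert fiberings,'' I would argue directly: if $h : M \to M$ is a diffeomorphism intertwining two almost free $S^1$-actions $\rho_1, \rho_2$ in the sense that $h(\rho_1(g,m)) = \rho_2(g,h(m))$, then $h$ sends $\rho_1$-orbits bijectively to $\rho_2$-orbits and respects the orientations of fibers induced by the standard orientation of $S^1$ (cf.\ Remark \ref{rem:compt_orientation}). Hence $h$ is a fiber-preserving diffeomorphism identifying the two Seifert fiberings up to orientation of $M$.

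For the reverse direction, I would assume that the two Seifert fiberings agree up to orientation of $M$, apply (6.1) of \cite{MR0219086} to obtain a diffeomorphism $h$ of $M$ and an automorphism $a$ of $S^1$ with $h(gm) = a(g)h(m)$, and then invoke the result on pp.~12--13 of \cite{MR0741334} to modify $h$ so that $a = \mathrm{id}$. This produces a genuine conjugation of the two $S^1$-actions.

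I expect the main obstacle to be precisely the step in which $a$ is normalised to the identity. Since $\mathrm{Aut}(S^1) = \{\pm \mathrm{id}\}$, $a$ could a priori be inversion, which reverses the orientation of every fiber; the map $i : T^1\Orb \to T^1\Orb$ mentioned just before the lemma shows that such orientation-reversing equivariances do occur naturally for the geodesic flow. That one can nevertheless absorb $a$ into a modification of $h$ is the technical core of the statement, and in this sketch I would take it as a black box from \cite{MR0741334} rather than re-derive it.
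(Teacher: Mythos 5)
Your argument is correct and is essentially the paper's own: the lemma is justified there by exactly the same two citations, namely (6.1) of \cite{MR0219086} for the equivalence up to an automorphism $a$ of $S^1$, and \cite[pp.~12--13]{MR0741334} to normalise $a$ to the identity, with the easy direction (conjugate actions give fiber-preservingly diffeomorphic fiberings) left implicit. Your identification of the normalisation of $a$ as the technical core, treated as a black box, matches how the paper handles it.
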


A covering of Seifert fiber spaces is a covering that restricts to coverings of fibers on preimages of fibers. 

\begin{rem} \label{rem:lifting} Suppose that $M$ is $k$-foldly covered by $S^3$ via a map $\sigma$ and that $\tau_k : S^1 \To S^1$ is a $k$-fold Lie group covering. Then the action of $S^1$ on $M$ via $\tau_k$ can be lifted to an action on $S^3$, since the map $S^1 \times S^3 \To M$, $(\theta,p) \mapsto \tau_k(\theta)\sigma(p)$ is trivial on fundamental groups. In particular, we see that Seifert fiberings on $M$ of type $o_1$ can be lifted to $S^3$. Moreover, since $\sigma$ preserves the orientations of the fibers induced by the $S^1$-actions, also the group of deck transformations preserves the orientations of the fibers.
\end{rem}

Finally, note that the classifications of Seifert fibered spaces (of type $o_1$) in the topological and the smooth category coincide \cite{Brin,MR0741334,MR1555366}. 

\subsection{Seifert fiberings of lens spaces}\label{sub:lens_spaces} Seifert fiberings of lens spaces are classified in \cite{GL16}. Here we remind of some facts. Recall that for coprime integers $p,q\neq 0$ the $L(p,q)$ \emph{lens space} is defined as a quotient of $S^3=\{(z_1,z_2)\in \Co^2 | |z_1|^2+|z_2|^2=1 \}$ by the free $\Z_p$-action on $S^3$ generated by $e^{2\pi i /p} (z_1,z_2) = (e^{2\pi i /p}z_1,e^{2\pi i q/p}z_2)$. Also recall that two lens spaces $L(p,q)$ and $L(p',q')$ are diffeomorphic, if and only if $p= \pm p'$ and $q \equiv \pm q'^{\pm 1} \text{ mod }p$ \cite[pp.~28-29]{MR0741334}. Given a pair of coprime natural numbers $\alpha_1, \alpha_2$ a Seifert fibering on $S^3$ can be defined by the action $\theta (z_1,z_2) = (e^{i\theta \alpha_1}z_1,e^{i\theta \alpha_2}z_2)$. This Seifert fibering descends to a Seifert fibering of $L(p,q)$ and every Seifert fibering on $L(p,q)$ with orientable base can be obtained in this way \cite[Thm.~5.1.]{GL16}.

An alternative description of lens spaces and Seifert fiberings on them works as follows. Suppose we have two solid tori $T_1$ and $T_2$ and a diffeomorphism $\psi: \partial T_1 \To \partial T_2$.  Then the space $T_1 \cup_{\psi} T_2$ obtained by gluing together $T_1$ and $T_2$ via $\psi$ is a lens space. Moreover, if we choose \emph{meridians} $m_i$ on $T_i$, i.e. embedded loops in $\partial T_i$ that are null-homotopic in $T_i$ and that generate maximal subgroups of $H_1(\partial T_i)$, and \emph{longitudes} $l_i$ on $T_i$, i.e. embedded loops in $\partial T_i$ that generate maximal subgroups of $H_1(T_i)$, and if we have $\psi(m_1) \sim s m_2 + r l_2$ in $H_1(\partial T_2)$, then the space $T_1 \cup_{\psi} T_2$ is a $L(r,-s)$ lens space, see \cite[Thm.~1.3.4.]{Brin} and \cite[Thm.~4.3]{MR0741334}. A standard fibered solid torus is a solid torus $T=D^2 \times S^1$ fibered by the orbits of the almost free $S^1$-action $e^{it}(re^{it_0},e^{it_1})=(re^{it}e^{it_0},e^{ikt}e^{it_1})$ for some positive integer $k$. Every smooth Seifert fibering on a solid torus $T$ is fiber-preservingly diffeomorphic to precisely one of the standard fibered solid tori. Suppose that the solid tori $T_1$ and $T_2$ in the situation above are Seifert fibered and that the diffeomorphism $\psi: \partial T_1 \To \partial T_2$ preserves fibers. Then we obtain an induced Seifert fibering of the lens space $T_1 \cup_{\psi} T_2$. Moreover, if $m_1$ is a meridian of $T_1$, then the fiber-homeomorphism (and hence fiber-diffeomorphism) type of $T_1 \cup_{\psi} T_2$ is completely determined by the homology class of $\psi(m_1)$ in $H_1(\partial T_2)$ \cite[Thm.~1.3.4.]{Brin}.

\subsection{Finite group actions on $S^1$ and $S^2$}\label{sub:linear}
We will encounter isometric actions of finite groups on Riemannian $2$-orbifolds $\Orb$ with $|\Orb|=S^2$. Such an action can be smoothed, i.e. there exists a smooth structure on $|\Orb|$ with respect to which the group acts smoothly. Indeed, the orbifold admits an equivariant triangulation and the corresponding simplicial complex can be equivariantly smoothed \cite{Lange3}. Then it follows from the classification of $2$-orbifolds with positive Euler characteristic (see \cite{MR2883685}) that the action can be conjugated to a linear action on $S^2$ (cf. \cite{MR2954692}).

In a similar way one can show that a continuous action of a finite group on a circle can be conjugated by an orientation-preserving homeomorphism to a linear action. Hence, if such an action preserves the orientation of the circle, then it must be cyclic. Moreover, if the order of an orientation-preserving homeomorphism $h$ of the circle $S^1$ is at least $3$ but finite, then its linearized action rotates the circle about an angle different from $\pi$. In this situation we say that the \emph{circle is rotated in a positive or negative direction} with respect to a chosen orientation if the angle rotated by the linearized action (obtained through conjugation by an \emph{orientation-preserving} homeomorphism) measured with respect to the chosen orientation is smaller or greater than $\pi$. Observe that $h$ rotates $S^1$ in a positive direction if and only if, following $S^1$ from $x$ in the positive direction, one \emph{encounters} $h(x)$ before $h^{2}(x)$. In particular, in this way it makes sense to say that a homeomorphism rotates two fibers of a Seifert fiber space of type $o_1$ in the \emph{same or in different directions}, cf. Remark \ref{rem:compt_orientation}. We will need the following statement.

\begin{lem}\label{lem:rot_same_direction} Let $(M,\mathcal{F})$ be a connected Seifert fiber space of type $o_1$ with only regular fibers. Let $h$ be a homeomorphism of $M$ of finite order $n\geq 3$ that leaves all fibers invariant and preserves their orientation. Suppose that the restriction of $h$ to each fiber has order $n$. Then $h$ rotates all fibers in the same direction. 
\end{lem}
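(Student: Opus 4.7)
The plan is to show that the ``direction'' function is locally constant on $M$, from which the result follows by connectedness. First I would define $\epsilon: M \to \{+,-\}$ by letting $\epsilon(x) = +$ if $h$ rotates the fiber $F_x$ through $x$ in the positive direction and $\epsilon(x) = -$ otherwise. By the criterion recalled in Section \ref{sub:linear}, the value $\epsilon(x)$ is detected purely by the cyclic order in which the three points $x$, $h(x)$, $h^2(x)$ appear on the oriented circle $F_x$; these points are distinct because by assumption $h|_{F_x}$ has order $n\geq 3$. By construction $\epsilon$ is constant along fibers, so it suffices to show it is locally constant on $M$.

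Next, near each fiber $F_0$ I would invoke the local triviality of the Seifert fibering at regular fibers to obtain a fiber-preserving homeomorphism $\phi:U\to D^2\times S^1$ of an open neighborhood $U$ of $F_0$, sending the fibers in $U$ to the circles $\{p\}\times S^1$. Since the fibers of $(M,\mathcal{F})$ carry continuously varying orientations (cf.\ Remark \ref{rem:compt_orientation}) and $D^2$ is connected, after possibly postcomposing $\phi$ with an orientation-reversal on the $S^1$ factor one may assume that $\phi$ matches these orientations with the standard orientation of $S^1$. The section $s:D^2\to U$, $s(p)=\phi^{-1}(p,1)$, is continuous, and since $h$ leaves every fiber invariant, so is the map
\[
\Phi:D^2 \to (S^1)^3,\qquad \Phi(p) = \bigl(\pi_{S^1}\phi(s(p)),\,\pi_{S^1}\phi(h(s(p))),\,\pi_{S^1}\phi(h^2(s(p)))\bigr),
\]
where $\pi_{S^1}$ denotes projection to the $S^1$ factor. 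By the distinctness observation above, $\Phi$ takes values in the open subset $T\subset(S^1)^3$ of pairwise distinct triples.

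To close the argument, I would use the elementary topological fact that $T$ has exactly two connected components, corresponding to the two cyclic orderings of three distinct points on $S^1$. Since $D^2$ is connected, $\Phi(D^2)$ lies in a single component of $T$, which translates into $\epsilon$ being constant on $U$. As $M$ is connected and can be covered by such fibered neighborhoods, $\epsilon$ is globally constant, which is exactly the claim.

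The main place where care is needed is the book-keeping of orientations when comparing the global continuous fiber orientation on $(M,\mathcal{F})$ to the standard orientation inside a local trivialization; the actual topological core of the proof is very short, and relies only on the hypothesis $n\geq 3$ forcing $x,h(x),h^2(x)$ to be distinct, which is what turns the direction question into a question about connected components of $T$.
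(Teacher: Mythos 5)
Your proof is correct and follows essentially the same route as the paper: both reduce to a trivialized fibered neighborhood $D^2\times S^1$ with a section, observe that $n\geq 3$ forces $x$, $h(x)$, $h^2(x)$ to be pairwise distinct on each fiber, and conclude by continuity over the connected disk that the cyclic order (hence the rotation direction) cannot change. Your formulation via the two connected components of the configuration space of distinct triples on $S^1$ is just a cleaner packaging of the paper's ``first encounter'' argument, in which a change of direction would force $h(S)$ and $h^2(S)$ to intersect and hence produce a fixed point of $h$ on a fiber.
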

\begin{proof}
By the connectedness assumption it suffices to prove the conclusion for a standard fibered solid torus with only regular fibers, i.e. for $D^2 \times S^1$. In this case the conclusion follows from continuity and the ``first encounter criterion'' above by looking at the orbits of a section $S=D^2\times \{*\}$ of the fibered solid torus under $h$. Indeed, if two fibers were rotated in different directions, then the images $h(S)$ and $h^2(S)$ of $S$ would have a nontrivial intersection by the first encounter criterion, resulting in a fixed point of $h$. This can only happen if $h$ is the identity, in contradiction to our assumption on the order of $h$.
\end{proof}
\begin{rem} It can be shown that the assumption in the lemma on the orders of the restrictions of $h$ to the fibers actually follows, too. Moreover, these conclusions still hold for the regular fibers of a general Seifert fiber space of type $o_1$ with a homeomorphism $h$ as in the lemma. However, all our assumptions will be satisfied in our application in Lemma \ref{lem:no_commute}.
\end{rem}

\subsection{Orbifolds of geodesics and geodesic periods}
\label{sub:orb_of_geo}
In the following a geodesic on a Besse orbifold is supposed to be prime unless stated otherwise. Suppose that $\Orb$ is a Besse $2$-orbifold with only isolated singularities. This is in particular the case if $\Orb$ is orientable. Then $M=T^1 \Orb$ is a manifold and the geodesic flow on it is periodic due to theorems by Epstein \cite{MR0288785} or Wadsley \cite{MR0400257} as discussed in Section \ref{sub:admitP}. Hence, the flow defines a Seifert fibering $\mathcal{F}_g$ of type $o_1$ on $M=T^1 \Orb$ whose fibers inherit a natural orientation from the flow. The quotient $\Orb_g=M/\mathcal{F}_g$ parametrizes the closed orbits of the geodesic flow on $M=T^1 \Orb$ and has a natural orbifold structure with isolated singularities, see Section \ref{sub:Seif}. A point on $\Orb_g$ of order $k$ corresponds to (an equivalence class of reparametrizations of) a geodesic on $\Orb$ whose period is $k$-times shorter than the period of a generic geodesic on $\Orb$. We say that this geodesics is of \emph{order $k$} and we call it \emph{exceptional} if $k>1$ and \emph{regular} otherwise. We will see that in this situation $\Orb_g$ is always a quotient of $S^2$ by a finite subgroup of $\SOr(3)$ and as such a topological sphere \cite{Lange2}.

A non-orientable Besse $2$-orbifold $\Orb$ is a metric quotient of an orientable Besse $2$-orbifold $\hat{\Orb}$ with isolated singularities by an isometric orientation reversing involution $s$. Since the $S^1$-action on $T^1 \hat\Orb$ defining $\mathcal{F}_g$ is normalized by $s$, the auxiliary metric on $T^1 \hat\Orb$ can be chosen to be $s$ invariant (cf. Section \ref{sub:Seif}) so that $s$ induces an orientation preserving isometry $s: \hat{\Orb}_g \To \hat{\Orb}_g$. In particular, $s$ either acts trivially on $\hat{\Orb}_g$ or rotates $\hat{\Orb}_g$ about two points (cf. Section \ref{sub:linear}). We have $T^1 \Orb=T^1 \hat{\Orb}/s$ (cf. \cite{MR2359514}) and set $\Orb_g:=\hat{\Orb}_g/s$, which is a Riemannian orbifold. Since $\hat \Orb_g$ will always be topologically a sphere, so will be $\Orb_g$. The points on $\Orb_g$ still correspond to (equivalences classes of reparametrizations of) geodesics on $\Orb$. However, if the singularities of $\Orb$ are not isolated, then $T^1 \Orb$ has orbifold singularities that cause singularities on $\Orb_g$ whose orders do not directly correspond to the periods of the respective geodesics on $\Orb$. In this situation the following lemma provides information on the periods of geodesics on $\Orb$.

\begin{lem}\label{lem:pro_geo_per} Suppose that $\hat\Orb \To \Orb$ is the orientable double cover of a non-orientable Besse $2$-orbifold $\Orb$ and that $s:\hat\Orb \To \hat\Orb$ generates the group of deck transformations. Then a prime geodesic on $\hat \Orb$ projects to a geodesic on $\Orb$. This geodesic on $\Orb$ is not prime if and only if the geodesic on $\hat \Orb$ is fixed by $s$ as an element of $\hat{\Orb}_g$ but not pointwise fixed as a fiber of $\mathcal{F}_g$. In this case the corresponding prime geodesic on $\Orb$ is covered twice by the geodesic on $\hat \Orb$.
\end{lem}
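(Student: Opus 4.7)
The plan is to lift $\hat\gamma$ to its orbit under the geodesic flow in $T^1\hat\Orb$ and to analyze how the deck involution $s$ (and its differential on $T^1\hat\Orb$, which I will also denote by $s$) acts on this single fiber of $\mathcal{F}_g$.

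First, since $\pi:\hat\Orb\to\Orb$ is a local isometry, the projection $\gamma:=\pi\circ\hat\gamma$ of any closed geodesic on $\hat\Orb$ is automatically a closed geodesic on $\Orb$ of the same parametrized period $L$; the only question is whether this parametrization is primitive. Let $\hat c\subset T^1\hat\Orb$ be the fiber of $\mathcal{F}_g$ traced out by $(\hat\gamma,\hat\gamma')$, and note that as $s$ is isometric, the differential of $s$ commutes with the geodesic flow $\phi_t$ on $T^1\hat\Orb$. Consequently $s$ permutes $\mathcal{F}_g$-fibers and preserves their induced orientations. If $s$ preserves $\hat c$ setwise, then its restriction to $\hat c$ is an orientation-preserving involution of a circle that commutes with the $S^1$-action given by the flow; such an involution is necessarily either the identity or the half-period shift $\hat\gamma(t)\mapsto \hat\gamma(t+L/2)$, because the centralizer of a transitive circle action on a circle is the circle itself.

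This trichotomy gives the lemma. If $s(\hat c)\neq \hat c$, then $\hat c$ and $s(\hat c)$ are two distinct fibers jointly projecting onto a single fiber of $T^1\Orb = T^1\hat\Orb/s$, so $\hat c$ itself maps bijectively to this image. Hence $\gamma$ is traversed exactly once with period $L$, and any closed-geodesic factor of $\gamma$ would lift to a closed-geodesic factor of $\hat\gamma$, so $\gamma$ is prime. The same argument applies if $s$ fixes $\hat c$ pointwise, since again the projection restricted to $\hat c$ is injective. Conversely, if $s$ acts on $\hat c$ as the half-period shift, then $\pi(\hat\gamma(t))=\pi(\hat\gamma(t+L/2))$ for every $t$, so $\gamma$ has parametrized period $L/2$ and is therefore not prime with its $L$-parametrization. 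This establishes the equivalence stated in the lemma.

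For the last assertion, note that in this third case the prime period $p$ of $\gamma$ cannot be strictly smaller than $L/2$: lifting the underlying prime geodesic of $\gamma$ through the $2$-fold orientable cover $\hat\Orb\to\Orb$ would otherwise yield either a closed geodesic on $\hat\Orb$ of period $p$ of which $\hat\gamma$ is a nontrivial iterate, or a closed geodesic on $\hat\Orb$ of period $2p<L$ of which $\hat\gamma$ is again a nontrivial iterate; both possibilities contradict primality of $\hat\gamma$. Hence $p=L/2$ and $\hat\gamma$ covers this prime geodesic exactly twice, as claimed. The one nontrivial technical step in the argument is the reduction of the $s$-action on an $s$-invariant fiber to the identity or the half-period shift; this follows entirely from $s$ being an involution commuting with the flow and is not a serious obstacle.
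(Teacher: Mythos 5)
Your argument is correct and follows essentially the same route as the paper: both pass to the unit tangent bundle and use that the deck involution commutes with the geodesic flow, so that on an $s$-invariant flow orbit it acts either as the identity or as the half-period shift, which yields exactly the stated trichotomy (your centralizer phrasing and the paper's reduction of $t_0$ to $\{0,1/2\}$ are the same observation). The only imprecision is the claim that $\hat\Orb\To\Orb$ is a local isometry --- this fails along the fixed-point set of $s$, which becomes the mirror locus of $\Orb$ --- but the conclusion that geodesics project to geodesics still holds via the chart definition of orbifold geodesics (an index-two subgroup argument), which is how the paper justifies it.
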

\begin{proof} The projection of a geodesic on $\hat \Orb$ to $\Orb$ is a geodesic by definition, since a small neighborhood in $\Orb$, isometric to some $M/G$ as in Definition \ref{dfn:orbifold}, is covered by a small neighborhood in $\hat \Orb$ isometric to $M/H$ for an index $2$ subgroup $H$ of $G$. A closed geodesic $\gamma$ (say of period 1) is prime if and only if for each positive integer $n$ and some (and then all) $t\in[0,1-1/n]$ we have $\gamma'(t)\neq \gamma'(t+1/n)$ as elements of the unit tangent bundle. A prime geodesic $\gamma$ on $\hat \Orb$ (of period 1) is invariant under $s$ (i.e. fixed as an element of $\hat \Orb_g$) if and only if $s(\gamma'(0))=\gamma'(t_0)$ for some $t_0\in [0,1]$ (and then also for some $t_0 \in \{0,1/2\}$ since $s$ has order $2$). It is pointwise fixed by $s$ if and only if for some (and then each) $t\in [0,1]$ we have $s(\gamma'(t))=\gamma'(t)$. Putting this together proves the second claim. In this case we have $s(\gamma'(0))=\gamma'(1/2)\neq \gamma'(0)$ and so the last claim follows, too.
\end{proof}

Examples for the two possible cases in the lemma are given by a reflection and an inversion of $S^2$. In the first case geodesics in the fixed point set of $s$ are both $s$-invariant and pointwise fixed by $s$. In the second case every geodesic is $s$-invariant but not pointwise fixed by $s$. 

The orbifolds $\Orb_g$ obtained in this way from Besse $2$-orbifolds have the following additional symmetry. Consider the involution $i:T^1 \Orb  \To T^1 \Orb$ mapping $(x,v)$ to $(x,-v)$. The involution $i$ is orientation-preserving and interchanges fibers of $\mathcal{F}_g$ representing different orientations of the same geodesic trajectory on $\Orb$ while reversing their natural orientation. We can suppose that the auxiliary Riemannian metric on $T^1 \Orb$ is also invariant under $i$ (cf. preceding paragraph and Section \ref{sub:Seif}). Then $i$ induces an involutive orientation-reversing isometry of $\Orb_g$ with the quotient being a Riemannian orbifold. In particular, it maps singular points to singular points of the same order. We introduce the following concept; cf. \cite[2.5.]{MR0496885} for the manifold analogue.

\begin{dfn} \label{dfn:orbifold_of_geodesics} For a Besse $2$-orbifold $\Orb$ we define the \emph{orbifold of oriented geodesics} to be $\Orb_g$ and the \emph{orbifold of non-oriented geodesics} to be $\Orb_g/i$.
\end{dfn}

Recall that we sometimes view geodesics on $\Orb$ as points on $\Orb_g$, that is we forget about the specific reparametrization. We distinguish two kinds of geodesics on $\Orb$. 

\begin{dfn} \label{dfn:selfinverse_of_geodesics} We call a geodesic on $\Orb$ \emph{self-inverse} if it is a branch point of the covering $\Orb_g \To \Orb_g/i$, that is, if it is fixed by $i$ as a point on $\Orb_g$ .
\end{dfn}

The following statement is a consequence of the discussion after Definition \ref{dfn:periods}.

\begin{lem}\label{lem:fix_point_i}
A geodesic on a Besse $2$-orbifold $\Orb$ is self-inverse if and only if it hits an isolated singularity of even order on $\Orb$ or the boundary of $|\Orb|$ perpendicularly (in the sense of centrically at the corner reflector singularities of odd order, cf. Section \ref{sub:Riem_orb}). In particular, if $\Orb_g$ is topologically a sphere, then the orbifold of non-oriented geodesics $\Orb_g/i$ is topologically a disk in the case that $\Orb$ has isolated singularities of even order or a topological boundary, and otherwise a projective plane.
\end{lem}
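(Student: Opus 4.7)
The plan is to reduce the statement to the local behavior of geodesics near singular points, as already described after Definition~\ref{dfn:periods}, and then invoke the classification of orientation-reversing involutions on $S^2$ for the topological consequence.

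First I would reformulate self-inverseness concretely: by definition of $i$ and of the fibers of $\mathcal{F}_g$, a prime geodesic $\gamma$ of period $T$ is self-inverse if and only if there is some $t_0\in(0,T)$ with $\gamma'(t_0)=-\gamma'(0)$ at the same base point $\gamma(t_0)=\gamma(0)$, that is, if and only if the unoriented trajectory of $\gamma$ coincides with the trajectory traversed in reverse. Hence the task is to characterize which closed geodesics have a ``turn-around'' point.

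For the direction $(\Leftarrow)$ I would directly quote the local description of geodesics recalled after Definition~\ref{dfn:periods}. A geodesic hitting an isolated singularity of even order is reflected along its incoming direction; a geodesic hitting a mirror segment is likewise reflected by the reflection law; and a geodesic hitting a corner-reflector of odd order $m$ centrically lifts, in the orientable double, to a geodesic through an isolated singularity of order $2m$ and is therefore reflected as well. In each case the closed geodesic traverses the same geometric trajectory twice per period, forwards and backwards, so $\gamma$ is self-inverse.

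For the direction $(\Rightarrow)$ I would argue by contradiction: assume $\gamma$ is self-inverse but meets none of the listed features. Then $\gamma$ lies entirely in the regular part of $\Orb$, in isolated singularities of odd order (which it passes straight through), or meets the boundary only non-centrically at odd-order corner reflectors (where it is not ``turned back''). In each of these local models a geodesic is either a smooth manifold geodesic or lifts to one in a local manifold chart, and such geodesics are uniquely determined by their initial tangent vector. Consequently $\gamma'(t_0)=-\gamma'(0)$ at a common base point would force $\gamma$ to be a smooth self-reversing geodesic in a manifold, which is impossible. This contradicts self-inverseness. The main subtlety is the corner reflector: I would handle it by passing to the metric double $\hat\Orb$, where a geodesic crossing a corner reflector non-centrically stays away from the preimage isolated singularity, reducing that case to the isolated-singularity analysis.

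For the topological consequence I would use that $i$ is an orientation-reversing involution of the topological sphere $\Orb_g$. Up to homeomorphism every such involution is conjugate to a linear orthogonal involution of $S^2$ (as recalled in Section~\ref{sub:linear}), hence to either the antipodal map (no fixed points) or a reflection in a great circle (fixed-point set $S^1$). In the first case $\Orb_g/i\cong\RP^2$; in the second $\Orb_g/i\cong D^2$. By the first part of the lemma, fixed points of $i$ on $\Orb_g$ exist precisely when $\Orb$ carries an isolated singularity of even order or a nonempty $1$-dimensional stratum, which gives the dichotomy claimed.
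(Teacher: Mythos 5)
Your overall strategy is the paper's: reformulate self-inverseness as the existence of a turn-around point and then classify the orientation-reversing involution $i$ on the sphere $\Orb_g$ via linearization, so the topological consequence and the treatment of isolated singularities and of perpendicular mirror hits are fine. However, your handling of the corner-reflector case contains a genuine error. The metric double of $\Orb$ turns a corner reflector whose local group is dihedral of order $2m$ into an isolated singularity of order $m$, not $2m$ (the wedge of angle $\pi/m$ doubles to a cone of angle $2\pi/m$). For odd $m$ the doubled geodesic therefore passes \emph{straight through} that cone point and is not reflected, so your justification of the ``if'' direction for a centric hit collapses. Likewise, in the ``only if'' direction, a geodesic hitting the corner point non-centrically does \emph{not} ``stay away from the preimage isolated singularity'' in the double -- it hits it -- and reducing to ``the isolated-singularity analysis'' in the double discards exactly the relevant group elements: the turn-around at a centric hit is produced by a reflection $r$ in the dihedral local group (equivalently by the deck involution of the double) satisfying $-\tilde v=r\tilde v$, i.e.\ with axis perpendicular to the incoming direction, not by the rotation subgroup.

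The paper's proof avoids all of these case distinctions at once: if $-\gamma'(0)=\gamma'(t_0)$, then by uniqueness of the geodesic flow $-\gamma'(t_0/2)=\gamma'(t_0/2)$ in $T_{\gamma(t_0/2)}\Orb$, so the whole question reduces to deciding at which points $x$ a unit vector can equal its own negative in the local quotient $\R^2/\Gamma_x$. That happens iff some element of $\Gamma_x$ sends $\tilde v$ to $-\tilde v$: for cyclic $\Gamma_x$ this requires even order; for a mirror it requires perpendicularity; for dihedral $\Gamma_x$ of order $2m$ it holds for every direction when $m$ is even and exactly for the centric direction when $m$ is odd. If you replace your double-cover reduction by this pointwise linear-algebra computation in the local group, your argument becomes correct and coincides with the paper's.
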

\begin{proof} By definition a geodesic $\gamma$ on $\Orb$ (say of period 1) is self-inverse if it is fixed by $i$ as an element of $\Orb_g$. This is precisely the case if $-\gamma'(0)=\gamma'(t_0)$ for some $t_0\in [0,1]$. In this case we have $-\gamma'(t_0/2)=\gamma'(t_0/2)$ in $T_{\gamma(t_0)} \Orb$. This happens if and only if $\gamma$ hits a singular point of $\Orb$ at $\gamma(t_0)$ as described in the lemma. For the second claim recall that $i$ reverses the orientation of $\Orb_g\cong S^2$ and is thus either conjugated to a reflection or the inversion by Section \ref{sub:linear}. Hence, if $i$ has a fixed point it is conjugated to a reflection and the corresponding quotient is a disk. Otherwise, it is conjugated to the inversion and the quotient is a projective plane.
\end{proof}

By the \emph{trajectory} of a geodesic we mean its geometric image in $\Orb$. The trajectories of geodesics are in one-to-one correspondence with the geodesic's images in $\Orb_g/i$. Since the period of a geodesic is $i$-invariant, we can talk about periods of geodesic trajectories. By the \emph{(non)-self-inverse geodesic periods} of $\Orb$ we mean the set of periods of (non)-self-inverse geodesic trajectories on $\Orb$ counted with multiplicity.

\begin{dfn} \label{dfn:period_spectrum} By the \emph{(labeled) geodesic periods} of a Besse $2$-orbifold $\Orb$ we mean the data encoded in its self-inverse and its non-self-inverse geodesic periods.
\end{dfn}

In the following we suppose that all Besse $2$-orbifold are normalized such that their maximal geodesic period is one. We will see that for a Besse $2$-orbifold $\Orb$ almost all geodesic trajectories on $\Orb$ have the same regular maximal period and that the periods of the finitely many exceptions are integer factors of this regular period. Suppose the periods of the exceptional non-self-inverse geodesic trajectories are specified by the integer factors $k_1,\ldots,k_l$ and the periods of the exceptional self-inverse geodesic trajectories are specified by the integer factors $k'_1,\ldots,k'_{l'}$. We will also see that there always exists a geodesic of maximal period which is non-self-inverse and that there are either infinitely many or no geodesics of maximal period that are self-inverse. In this case, the geodesic periods of $\Orb$ can be recorded symbolically as the labeled unordered tuple $(1_{\infty},\overline{k_1},\ldots,\overline{k_l},k'_1,\ldots,k'_{l'})$ or $(\overline{k_1},\ldots,\overline{k_l},k'_1,\ldots,k'_{l'})$ depending on whether there exist geodesics of maximal period that are self-inverse or not. For ease of parlance we agree upon calling this tuple the geodesic periods of $\Orb$.

To summarize, in case of a Besse $2$-orbifold with isolated singularities the geodesic periods and the data encoded in the covering $\Orb_g \To \Orb_g/i$ determine each other. In fact, the map $i$ acts as a reflection or an inversion on the topological sphere $\Orb_g$ depending on whether $1_{\infty}$ occurs in the geodesic periods or not, the pairs of singularities on $\Orb_g$ interchanged by $i$ correspond to the $\overline{k_1},\ldots,\overline{k_l}$, and the singularities on $\Orb_g$ in the fixed point set of $i$ correspond to the $k'_1,\ldots,k'_{l'}$. However, in general these data do not determine each other. For instance, $S^2(2,2)$ and $D^2(4;)$ with some Besse metric have the same geodesic periods, but different orbifolds of oriented and non-oriented geodesics. Also, the orbifolds of oriented and unoriented geodesics of $D^2(;2)$ and $D^2(;4,2)$, or of $S^2(2,3,4)$ and $D^2(2,3,4)$ with some Besse metric coincide, but (even) their (``unlabeled'') geodesic periods differ (see Section \ref{sub:nonorbad}). Hence, in order to prove rigidity of the geodesic periods in case of an orbifold with non-isolated singularities, a more detailed analysis involving geometric arguments based on  Lemma \ref{lem:pro_geo_per} will be necessary; see Section \ref{sub:nonorbad} and Section \ref{sub:non-orient_spher}.


\section{Proof of the main result}
\label{sec:proof}

In this section we show our main results on geodesic periods and orbifolds of geodesics of Besse $2$-orbifolds. We first treat the case of spindle orbifolds which, together with the case of the real projective plane, forms a central part of our proof.

\subsection{Spindle orbifolds} \label{sub:spindle}

Let $\Orb$  be a Besse $(p,q)$-spindle orbifold. Recall that we do not make assumptions on $\mathrm{gcd}(p,q)$. We claim that the orbifold of geodesics $\Orb_g$ and $\Orb_g/i$ and the geodesic periods are given as stated in Table \ref{tab:gls} in the appendix. In particular, we show that $\Orb_g=S^2((p+q)/\kappa,(p+q)/\kappa)$ where $\kappa$ is $1$ or $2$ depending on whether $p+q$ is odd or even. The proof is divided into steps a)-f).
\newline
\newline
\noindent a) Recall from Section \ref{sub:orb_of_geo} that the unit tangent bundle $M=T^1 \Orb$ is a manifold.

\begin{lem}\label{lem:gluing} The unit tangent bundle $M=T^1 \Orb$ of a $(p,q)$-spindle orbifold $\Orb$ is a lens space. More precisely, we have $M\cong L(p+q,1)$.
\end{lem}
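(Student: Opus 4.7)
My plan is to present $M = T^1\Orb$ as a union of two Seifert fibered solid tori glued along a common boundary torus, and then apply the description of lens spaces as gluings of solid tori recalled in Section~\ref{sub:lens_spaces} to pin down the lens space type.

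The first step is to split $\Orb = S^2(p,q)$ along an equator $E \cong S^1$ into two closed cone disks $\Orb_p \cong D^2(p)$ and $\Orb_q \cong D^2(q)$, each containing a single singular point. This induces a decomposition $M = T^1\Orb_p \cup_T T^1\Orb_q$ along the torus $T = T^1\Orb|_E$. The second step is to show that each piece $T^1 D^2(n)$ is a solid torus. Using the orbifold-chart identification $D^2(n) = D^2/\Z_n$ with the standard rotation action, one has $T^1 D^2(n) = (D^2 \times S^1)/\Z_n$ with the diagonal $\Z_n$-action (the action on tangent vectors being the derivative of the base action). The explicit $\Z_n$-invariant map $(z, v) \mapsto (z\bar v, v^n)$ then produces a homeomorphism with $D^2 \times S^1$; the resulting core is the unit tangent circle above the cone point, which is simultaneously the exceptional Seifert fiber of order $n$ for the $S^1$-action on $T^1 D^2(n)$ that rotates tangent vectors.

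The heart of the argument is step three: computing the meridians and a longitude of the two solid tori in a common basis of $H_1(T) \cong \Z^2$. I would work in intrinsic coordinates $(\Theta, \Phi)$ on $T$, where $\Theta \in S^1$ parametrizes $E$ from the $p$-side and $\Phi \in S^1$ is the tangent-vector angle with respect to the outward normal of $\Orb_p$; concretely, in the pre-quotient $\partial D^2 \times S^1$ these come from $\Theta = p\theta_0$ and $\Phi = \theta_1 - \theta_0$. A direct calculation then gives a $p$-side meridian of class $m_p = (p, -1)$. Repeating the computation on the $q$-side and transporting the result to $(\Theta, \Phi)$-coordinates via the gluing yields $m_q = (-q, -1)$ and a longitude $l_q = (-1, 0)$; here two sign changes enter the change-of-basis, namely $\Theta^{(q)} = -\Theta$ (the equator-gluing must reverse orientation to yield an oriented $\Orb$) and $\Phi^{(q)} = \Phi + \pi$ (the outward normals of $\Orb_p$ and $\Orb_q$ along $E$ are antiparallel).

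Finally, writing $m_p = s\, m_q + r\, l_q$ in integers forces $s = 1$ and $r = -(p+q)$; the gluing formula from Section~\ref{sub:lens_spaces} then identifies $M \cong L(r, -s) = L(-(p+q), -1)$, which is $L(p+q, 1)$ by the classification of lens spaces. The principal technical hazard lies in step three, where the simultaneous bookkeeping of the orientation-reversing equator gluing and the $\pi$-shift of the tangent-vector angle is delicate, and any single sign error would yield a different lens space. The result can be sanity-checked against the cases $p=q=1$ (where $M = L(2,1) = \RP^3 = T^1 S^2$) and $\Orb = S^2(2,2) = S^2/\Z_2$ (where $M = L(4,1)$ matches $T^1 S^2 / \Z_2 = \SOr(3)/\Z_2$).
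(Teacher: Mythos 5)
Your proposal is correct and follows essentially the same route as the paper: split the spindle along an equator into two cone disks, observe that the unit tangent bundle over each is a fibered solid torus, express the two meridians (and a longitude) in a common basis of $H_1$ of the gluing torus, and read off the lens space type from the gluing formula of Section \ref{sub:lens_spaces}; your homology classes $(p,-1)$, $(-q,-1)$, $(-1,0)$ are exactly the paper's $-m_1$, $-m_2$, $-l_2$ written in the basis $(c_1,c_1')$, and the resulting $L(-(p+q),-1)\cong L(p+q,1)$ agrees. The only genuine additions are the explicit homeomorphism $(z,v)\mapsto(z\bar v,v^n)$ exhibiting $T^1D^2(n)$ as a solid torus and the coordinate bookkeeping for the gluing, both of which check out.
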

\begin{figure}
	\centering
		\includegraphics[width=0.48\textwidth]{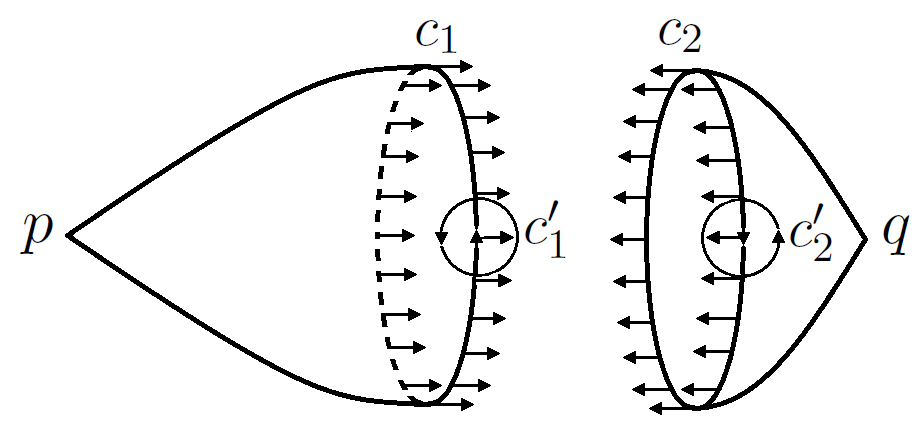}
	\caption{Gluing construction in Lemma \ref{lem:gluing}. Note that the curve $c_1$ is homotopic to the curve $c_2^{-1}$, that is, $c_2$ traversed in the opposite direction.}
	\label{fig:gluing}
\end{figure}
\begin{proof} To prove the lemma we choose an equator of $\Orb$ that separates the two singular points such that $\Orb$ decomposes into two closed disks $D_i$ contained in open sets $U_i$, $i=1,2$, each of which admits an orbifold chart $\tilde U_i$. We denote the cyclic group acting on $\tilde U_i$ by $\Gamma_i$ and the preimage of $D_i$ in $\tilde U_i$ by $\tilde D_i$, $i=1,2$. The space $M$ decomposes accordingly into the preimages $T_i$ of the disks $D_i$ that are contained in the open preimages $V_i$ of the $U_i$. Let $\tilde V_i$ be the covering chart of $V_i$ induced by the chart $\tilde U_i$ and let $\tilde T_i$ be the preimage of $T_i$ in $\tilde V_i$. Then $\tilde T_i$ is the restriction of the unit tangent bundle of $\tilde U_i$ to the disk $\tilde D_i$ and as such a full torus. We choose a smooth identification of $\tilde U_i$ with an open ball in $\R^2$ with respect to which the action of $\Gamma_i$ becomes linear, and which restricts to a diffeomorphism  $D^2 \To \tilde D_i$. This induces a diffeomorphism $D^2 \times S^1 \To \tilde T_i$ that maps the $S^1$-fibers to the fibers of the foot-point projection $\tilde \pi_i : \tilde T_i \To \tilde D_i$, $D^2$ to a section of $\tilde \pi_i$, and with respect to which $\Gamma_i$ acts diagonally in both factors by rotations. In particular, the $T_i$ are full tori themselves. The space $M$ can be recovered from these full tori by a specification of the gluing homeomorphism $\psi: \partial T_1 \To \partial T_2$. The homeomorphism type of $T_1 \cup_{\psi} T_2$ is determined by the homology class of $\psi(m_1)$ in $\partial T_2$ for a meridian $m_1$ of $\partial T_1$, i.e. an embedded loop in $\partial T_1$ that is null-homotopic in $T_1$ and that generates a maximal subgroup of $H_1(\partial T_1)$ (cf. Section \ref{sub:lens_spaces}). With respect to the splitting above we define two loops $\tilde{c}_1,\tilde{c}'_1 : S^1 \To \partial \tilde{T}_1 = S^1 \times S^1$ by $\tilde{c}_1(z)=(z,z)$ and $\tilde{c}'_1(z)=(1,z)$ and two loops $\tilde{c}_2,\tilde{c}'_2 : S^1 \To \partial \tilde{T}_2$ analogously. Note that the orientations of the fibers are chosen in such a way that the loops $\tilde{c}_i$ are invariant under $\Gamma_i$. We can choose meridians $\tilde{m}_i$ of $\tilde{T}_i$ such that in homology of $\partial \tilde{T}_i$ we have $\tilde{m}_i \sim -\tilde{c}_i+\tilde{c}'_i$. The meridians $\tilde{m}_i$ of $\tilde{T}_i$ project to meridians $m_i$ of $T_i$. Let $c_i': S^1 \To \partial T_i$ be the projection of $\tilde{c}'_i$ and let $c_i: S^1 \To \partial T_i$ be curves such that in homology $\tilde{c}_i$ projects to $r_i \cdot c_i$, where $r_1=p$ and $r_2=q$. Then, in homology we have $m_i \sim -r_i c_i + c'_i$. Observe that we recover $M$ if the attaching map $\psi$ satisfies $\psi(c_1) \sim - c_2$ and $\psi(c'_1) \sim c'_2$, see Figure \ref{fig:gluing}. Picking $l_2=c_2$ as a longitude in $\partial T_2$, i.e. an embedded loop in $\partial T_2$ that generates a maximal subgroup of $H_1(T_2)$ (cf. Section \ref{sub:lens_spaces}), we have $\psi(m_1) \sim p c_2 + c'_2 = 1 \cdot m_2+(p+q) \cdot  l_2$. The resulting space $T_1 \cup_{\psi} T_2$ is a lens space of type $L(p+q,-1)\cong L(p+q,1)$ as claimed (cf. Section \ref{sub:lens_spaces}).
\end{proof}

\noindent b) Recall from Section \ref{sub:orb_of_geo} that $\mathcal{F}_g$ denotes the Seifert fibering of type $o_1$ on $M=T^1 \Orb$ defined by the geodesic flow. As an auxiliary tool we also need the Seifert fibering $\mathcal{F}_t$ on $M$ induced by the projection $M=T^1 \Orb \To \Orb$. Since both $M$ and $\Orb$ are orientable, this Seifert fibering is of type $o_1$, too (cf. Section \ref{sub:Seif}). We have $M/\mathcal{F}_t =\Orb$ as a Riemannian orbifold. By Remark \ref{rem:lifting} the Seifert fiberings $\mathcal{F}_t$ and $\mathcal{F}_g$ can be lifted to Seifert fiberings of type $o_1$ of the universal cover $\tilde M$ of $M$. We denote these lifts by $\tilde{\mathcal{F}}_t$ and $\tilde{\mathcal{F}}_g$, and the corresponding quotients by $\tilde \Orb_t:=\tilde M/\tilde{\mathcal{F}}_t$ and $\tilde \Orb_g:=\tilde M/\tilde{\mathcal{F}}_g$. The fiberings $\mathcal{F}_t$ and $\mathcal{F}_g$ on $M$ as well as their lifts $\tilde{\mathcal{F}}_t$ and $\tilde{\mathcal{F}}_g$ on $\tilde{M}$ are fiberwise transversal.
\newline
\newline
\noindent c) We have the following commutative diagram
\[
	\begin{xy}
		\xymatrix
		{
		  \tilde{\Orb}_t \ar[d] && \tilde{M}\cong S^3 \ar[ll] \ar[rr] \ar[d] && \tilde{\Orb}_g \ar[d]  \\
		 \Orb\cong S^2(p,q)  &&M\cong L(p+q,1) \ar[ll]  \ar[rr]  && \Orb_g  
		}
	\end{xy}
\]	
where the outer vertical projections are coverings of Riemannian orbifolds. The upper horizontal projections induce surjections $\pi_1(\tilde{M}) \To \pi_1^{\mathrm{orb}}(\tilde{\Orb_t})$ and $\pi_1(\tilde{M}) \To \pi_1^{\mathrm{orb}}(\tilde{\Orb_g})$ \cite[Lem.~3.2]{MR0705527}. According to the classification of simply connected, compact $2$-orbifolds \cite[Thm.~2.5]{MR0705527} this implies $\tilde{\Orb}_g\cong S^2(p_g,q_g)$ and $\tilde{\Orb}_t\cong S^2(p_t,q_t)$ as orbifolds for coprime $p_g,q_g$ and coprime $p_t,q_t$. In other words, Seifert fiberings on $S^3$ are uniquely determined by two coprime positive integers \cite{MR1555366} (cf. \cite[Prop.~5.2.]{GL16}). Let $\Gamma \cong \Z_{p+q}$ be the group of deck transformations of the covering $\tilde{M} \To M$. The action of $\Gamma$ on $\tilde{M}$ induces actions on $\tilde{\Orb}_t$ and $\tilde{\Orb}_g$ which are not necessarily effective. We denote by $\Gamma_t$ and $\Gamma_g$ the quotients of $\Gamma$ by the respective kernels. The groups $\Gamma_t$ and $\Gamma_g$ are cyclic and we have $\tilde{\Orb}_t/\Gamma_t=\Orb$ and $\tilde{\Orb}_g/ \Gamma_g= \Orb_g$. Since the group $\Gamma$ preserves the orientations of the fibers of $\tilde{\mathcal{F}}_t$ and $\tilde{\mathcal{F}}_g$ (cf. Remark \ref{rem:lifting}) and the orientation of $\tilde{M}$, the groups $\Gamma_t$ and $\Gamma_g$ preserve the orientation of $|\tilde{\Orb}_t|\cong S^2$ and $|\tilde{\Orb}_g|\cong S^2$, respectively. Therefore, the action of $\Gamma_t$ and of $\Gamma_g$ can be conjugated to a standard action of a cyclic group on $S^2$ (cf. Section \ref{sub:linear}). Moreover, since $\Gamma$ acts isometrically on $\tilde{\Orb}_t$ and $\tilde{\Orb}_g$ with respect to the orbifold metrics introduced above (cf. step b) and Section \ref{sub:Seif}), it fixes the singular points. Both together implies $p=|\Gamma_t|p_t$, $q=|\Gamma_t|q_t$ (up to permutation) and $\Orb_g=S^2(|\Gamma_g|p_g,|\Gamma_g|q_g)$.
\begin{rem}\label{rem:Lusternik_Schnirelmann}
In the case of $\Orb \cong S^2$ in \cite{MR0636885} the theorem of Lusternik-Schnirelmann, which guarantees the existence of three simple closed geodesics, is applied at this point to show that there exists a simple regular geodesic. It is then not difficult to conclude that all geodesics are simple and regular; see \cite{MR0636885}.
\end{rem}
\noindent d)  Let $\tilde{i}:\tilde{M} \To \tilde{M}$ be a lift of the involution $i:M \To M$ introduced in Section \ref{sub:orb_of_geo}.
\begin{lem} \label{lem:orientation_i_tilde}The lift $\tilde{i}$ preserves the orientations of the fibers of $\tilde{\mathcal{F}}_t$ while it reverses the orientations of the fibers of $\tilde{\mathcal{F}}_g$.
\end{lem}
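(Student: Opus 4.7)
The plan is to understand the action of $i$ itself on both fiberings of $M$ and then transfer the conclusion to $\tilde M$ using that the covering $\tilde M \to M$ preserves the natural orientations of the fibers, which is guaranteed by Remark \ref{rem:lifting}.

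First I would examine $i$ on the footpoint fibration $\mathcal{F}_t$. A fiber is a unit tangent circle $T^1_x\Orb$ at some footpoint $x$, and by definition $i$ sends $v\mapsto -v$, i.e.\ it restricts to the antipodal map on this circle. In any angular parametrization of $T^1_x\Orb\cong S^1$ (compatible with the orientation used to define $\mathcal{F}_t$) this is rotation by $\pi$, and hence orientation-preserving. Thus $i$ preserves the orientation of every fiber of $\mathcal{F}_t$.

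Next I would examine $i$ on the geodesic flow $\mathcal{F}_g$. Let $\phi_t$ denote the geodesic flow on $M$. If $\gamma$ is the geodesic with $(\gamma(0),\gamma'(0))=(x,v)$, then the reversed curve $\bar\gamma(s)=\gamma(-s)$ is the geodesic with initial data $(x,-v)$, so
\[
i\bigl(\phi_t(x,v)\bigr)=\bigl(\gamma(t),-\gamma'(t)\bigr)=\bigl(\bar\gamma(-t),\bar\gamma'(-t)\bigr)=\phi_{-t}(x,-v)=\phi_{-t}\bigl(i(x,v)\bigr).
\]
Consequently $i$ carries each flow orbit bijectively onto another (possibly the same) flow orbit while sending $\phi_t$-time to $-t$, and hence reverses the natural orientation of every fiber of $\mathcal{F}_g$ induced by the flow.

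Finally, I would lift to $\tilde M$. The diffeomorphism $\tilde i$ covers $i$, and since $\tilde M\to M$ is a covering of Seifert fiber spaces for both $\tilde{\mathcal{F}}_t\to\mathcal{F}_t$ and $\tilde{\mathcal{F}}_g\to\mathcal{F}_g$, it restricts on each fiber to an orientation-preserving covering of the underlying fiber in $M$ (Remark \ref{rem:lifting}). The restriction of $\tilde i$ to any fiber therefore lifts the restriction of $i$ to the image fiber and has the same orientation behaviour. Combined with the two computations above this gives the lemma.

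The arguments are essentially routine; the only point that requires attention is verifying that the orientation induced on $\mathcal{F}_g$-fibers by the geodesic flow and on $\mathcal{F}_t$-fibers by the standard circle parametrization are indeed the ones used implicitly in Section \ref{sub:Seif} when speaking of orientations of Seifert fibers, so that the ``orientation-preserving'' and ``orientation-reversing'' conclusions transfer unambiguously to the lifts.
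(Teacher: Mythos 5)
Your proposal is correct and follows essentially the same route as the paper: the paper's one-line proof simply invokes the corresponding properties of $i$ downstairs (that $i$ acts on each $\mathcal{F}_t$-fiber as the antipodal map of a circle, hence orientation-preservingly, and conjugates the geodesic flow to its time reversal, hence reverses the orientation of $\mathcal{F}_g$-fibers, as recorded in Section \ref{sub:orb_of_geo}) and transfers them to the lifts via the fiber-orientation-preserving covering of Remark \ref{rem:lifting}. You have merely written out explicitly the details the paper leaves implicit, and your computation $i(\phi_t(x,v))=\phi_{-t}(i(x,v))$ is exactly the right justification.
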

\begin{proof} The claim follows from the respective property of the action of $i$ on the fibers of $\mathcal{F}_t$ and $\mathcal{F}_g$.
\end{proof}

If $pq$ is odd,  then $i: \Orb_g \To \Orb_g$ has no fixed points by Lemma \ref{lem:fix_point_i}. Hence, in this case it follows that $|\Gamma_g|p_g=|\Gamma_g|q_g$, and thus that $p_g=q_g=1$ since $p_g$ and $q_g$ are coprime. For even $pq$ the same conclusion will follow from the subsequent lemma. 
\begin{lem} \label{lem:commute}The actions of $\tilde{i}$ and $\Gamma$ on $\tilde{M}\cong S^3$ commute.
\end{lem}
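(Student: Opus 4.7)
The plan is to recognize the involution $i$ as the $\pi$-rotation inside the natural $S^1$-action on $M$ generating $\mathcal{F}_t$, and then transfer the desired commutation to $\tilde{M}$. Since the spindle orbifold $\Orb$ is orientable, each tangent plane is canonically oriented, and rotation of unit tangent vectors defines a smooth almost free $S^1$-action $g\colon S^1 \to \mathrm{Diff}(M)$ whose orbits are the fibers of $\mathcal{F}_t$. Because $g(\pi)(x,v) = (x,-v)$, the involution $i$ equals $g(\pi)$ and sits literally inside this $S^1$-action.

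Next, I would invoke Remark \ref{rem:lifting} to lift the composition $g \circ \tau_k$ (for a suitable Lie-group cover $\tau_k$ of $S^1$) to a smooth $S^1$-action $\tilde g\colon S^1 \to \mathrm{Diff}(\tilde{M})$ whose orbits are the fibers of $\tilde{\mathcal{F}}_t$. A short continuity argument then shows that $\tilde g$ commutes with the deck group $\Gamma$: for any $\gamma \in \Gamma$ and any $\theta \in S^1$, the conjugate $\gamma\,\tilde g(\theta)\,\gamma^{-1}$ is again a lift of the same diffeomorphism on $M$ as $\tilde g(\theta)$, so the two differ by a deck transformation $\delta(\theta) \in \Gamma$ depending continuously on $\theta$ with $\delta(0) = 1$; discreteness of $\Gamma$ then forces $\delta \equiv 1$.

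Finally, $\tilde i$ and $\tilde g(\theta^*)$ (for $\theta^*$ with $\tau_k(\theta^*) = \pi$) are both lifts of $i = g(\pi)$, hence $\tilde i = \tilde g(\theta^*) \cdot \gamma_0$ for some $\gamma_0 \in \Gamma$. The subgroup $\tilde g(S^1) \cdot \Gamma \subset \mathrm{Diff}(\tilde{M})$ is then an abelian group, being generated by two commuting abelian subgroups, so $\tilde i$ commutes with $\Gamma$. The only subtle point I anticipate is the reparametrization needed in Remark \ref{rem:lifting}, but it is harmless here because the argument only uses the existence of \emph{some} $S^1$-action on $\tilde{M}$ commuting with $\Gamma$ whose image contains a lift of $i$ modulo $\Gamma$; if desired, one can bypass the reparametrization altogether by defining $\tilde g$ directly as the flow of the lift to $\tilde M$ of the vector field generating $g$, which a priori is an $\R$-action that factors through an $S^1$-action with the same orbits as $\tilde{\mathcal{F}}_t$.
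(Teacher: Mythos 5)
Your proof is correct, but it follows a genuinely different route from the paper. The paper argues fiberwise: it picks a $\Gamma$-invariant fiber $S^1_t$ of $\tilde{\mathcal{F}}_t$, observes that $\gamma$ and $\tilde i$ both preserve $S^1_t$ and its orientation (using Remark \ref{rem:lifting} and Lemma \ref{lem:orientation_i_tilde}), so their restrictions generate a finite orientation-preserving, hence cyclic, hence abelian group of homeomorphisms of a circle; the commutator $\gamma\tilde i\gamma^{-1}\tilde i^{-1}$ is then a deck transformation with a fixed point and is therefore the identity. You instead make the commutation global from the start: you identify $i$ as the $\pi$-element of the fiberwise-rotation $S^1$-action generating $\mathcal{F}_t$ (which exists precisely because $\Orb$ is orientable, so the tangent planes carry a coherent orientation and the local cyclic groups, acting by orientation-preserving isometries, commute with fiberwise rotation), lift that action to $S^3$ via Remark \ref{rem:lifting}, show by the standard continuity-plus-discreteness argument that the lifted circle commutes with $\Gamma$, and conclude because $\tilde i$ lies in the abelian group generated by the lifted circle and the cyclic group $\Gamma\cong\Z_{p+q}$. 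Both arguments are sound. Yours has the conceptual advantage of isolating exactly why commutativity holds here and fails for $\RP^2$ in Lemma \ref{lem:no_commute}: there the fibers of $\mathcal{F}_t$ cannot be coherently oriented, so no global rotation action containing $i$ exists. The paper's fiber-restriction argument has the advantage of reusing machinery (the orientation lemmas and the linearization of finite circle actions) that is needed anyway for the subsequent non-commutation analysis, and of not requiring the explicit identification $i=g(\pi)$ or the smoothness of the rotation action across the exceptional fibers --- both of which are true but deserve the one-line justification you only sketch. Note also that your final step genuinely uses that $\Gamma$ is abelian (to absorb the ambiguity $\tilde i=\tilde g(\theta^*)\gamma_0$), which is available here since $\Gamma$ is cyclic.
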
 
\begin{proof} Let $S^1_t$ be a $\Gamma$-invariant fiber of $\tilde{\mathcal{F}}_t$ and let $\gamma \in \Gamma$ be nontrivial. Since $i$ leaves the fibers of $\mathcal{F}_t$ invariant, its lift $\tilde{i}$ leaves preimages of $\mathcal{F}_t$-fibers invariant. In particular, we see that $\tilde i$ leaves $S^1_t$ invariant. The orientation of $S^1_t$ is preserved by $\gamma$ due to Remark \ref{rem:lifting} and by $\tilde i$ due to Lemma \ref{lem:orientation_i_tilde}. As a lift of $i$ the map $\tilde{i}$ normalizes $\Gamma$. Therefore the restrictions of $\gamma$ and $\tilde{i}$ to $S^1_t$ generate a finite group that preserves the orientation of $S^1_t$. Since the action can be conjugated to a linear (orientation-preserving) action on a circle (cf. Section \ref{sub:linear}), we see that the generated group must be cyclic and thus $\gamma$ and $\tilde{i}$ commute on $S^1_t$. The fact that their commutator $\gamma\tilde{i}\gamma^{-1}\tilde{i}^{-1}: \tilde{M}\To \tilde{M}$ is a lift of the identity of $M$, implies that $\gamma$ and $\tilde{i}$ commute everywhere. Hence, the claim follows.
\end{proof}
Indeed, now we can show
\begin{lem}\label{lem:exceptional_geo} The involution $i$ does not fix singular points on $\Orb_g$. In particular, geodesics hitting a singularity on $\Orb$ of even order are regular.
\end{lem}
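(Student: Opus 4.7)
My plan is to argue by contradiction: suppose $i$ fixed a singular point of $\Orb_g$, and show that this forces the tautological involution $(y,v)\mapsto(y,-v)$ on $M=T^1\Orb$ to have a fixed point, which is absurd. The strategy is to pull the situation back to the universal cover $\tilde M\cong S^3$ and use the two transversal Seifert fiberings, together with the commutativity of $\tilde i$ with $\Gamma$ established in Lemma \ref{lem:commute}.

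First I would lift the supposed fixed singular point to $\tilde{\Orb}_g\cong S^2(p_g,q_g)$. Because the singular points of $\Orb_g=\tilde{\Orb}_g/\Gamma_g$ come precisely from the two poles $x_\pm$ of $\tilde{\Orb}_g$ (fixed pointwise by the rotation action of $\Gamma_g$), the fixed singular point on $\Orb_g$ is of the form $[x_+]$ for one of the poles. The involution $\tilde i$ descends to an involution $\bar{\tilde i}$ on $\tilde{\Orb}_g$ (since $i$ permutes the fibers of $\mathcal F_g$), and this descended map commutes with $\Gamma_g$ by Lemma \ref{lem:commute}. Therefore $\bar{\tilde i}(x_+)$ lies in the $\Gamma_g$-orbit of $x_+$, which is just $\{x_+\}$, so $\bar{\tilde i}(x_+)=x_+$ and the fiber $\tilde F_+$ of $\tilde{\mathcal F}_g$ over $x_+$ is $\tilde i$-invariant.

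Next I would look at the restriction $\tilde i|_{\tilde F_+}$. By Lemma \ref{lem:orientation_i_tilde}, $\tilde i$ reverses the orientations of the fibers of $\tilde{\mathcal F}_g$, so this restriction is an orientation-reversing self-homeomorphism of the circle $\tilde F_+$. Every such map of $S^1$ has a fixed point (for instance, its Lefschetz number is $2$), producing a fixed point of $\tilde i$ on $\tilde M$. Projecting down yields a fixed point of $i$ on $M$, which is impossible since $(y,v)\mapsto(y,-v)$ has no fixed vector on the unit tangent bundle. This contradiction establishes the first claim.

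For the "in particular" assertion, let $\gamma$ be a geodesic on $\Orb$ meeting an isolated singularity of even order. By Lemma \ref{lem:fix_point_i}, $\gamma$ is self-inverse, so $[\gamma]\in\Orb_g$ is a fixed point of $i$; the first part then forces $[\gamma]$ to be a regular point of $\Orb_g$, i.e.\ $\gamma$ is a regular geodesic. The only real obstacle I foresee is the bookkeeping in passing fixed-point information among $\Orb_g$, $\tilde{\Orb}_g$ and $\tilde M$; Lemma \ref{lem:commute} is used precisely to ensure that a pole of $\tilde{\Orb}_g$ is a singleton $\Gamma_g$-orbit, so that a fixed singular point downstairs really lifts to a fixed pole upstairs.
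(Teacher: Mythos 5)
Your setup is sound up to the last step: lifting the supposed $i$-fixed singular point of $\Orb_g$ to a $\tilde i$-invariant fiber $\tilde F_+$ of $\tilde{\mathcal{F}}_g$, and noting that $\tilde i$ restricts to an orientation-reversing, hence not fixed-point-free, homeomorphism of that circle. The gap is the claimed absurdity at the end. On an orbifold the fibre of $T^1\Orb\To\Orb$ over a singular point $y$ of order $k$ is the quotient of the circle of unit vectors by the rotation group $\Z_k$, and on this quotient the map $[v]\MTo[-v]$ is the \emph{identity} whenever $k$ is even, because $-v$ then lies in the $\Z_k$-orbit of $v$. So in precisely the case you must treat ($pq$ even; for $pq$ odd Lemma \ref{lem:fix_point_i} already shows $i$ acts freely on $\Orb_g$), the involution $i$ of $M=T^1\Orb$ fixes the entire circle over each even-order singularity pointwise, and the fixed point you produce is no contradiction --- it merely re-confirms that the exceptional geodesic in question is self-inverse, which is where you started.

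To close the argument one must use more than the existence of a single $\tilde i$-fixed point on $\tilde F_+$; one must bring in the size of the deck group. The paper does this as follows: $\Gamma$ acts effectively on the invariant fiber $S^1_c=\tilde F_+$ preserving its orientation, while $\tilde i$ leaves it invariant and reverses the orientation, so the group they generate acts on the circle as a finite dihedral group; since $\Gamma$ and $\tilde i$ commute (Lemma \ref{lem:commute}), this dihedral group is abelian, hence has order at most $4$, whence $p+q=|\Gamma|\leq 2$, which is incompatible with $pq$ being even. Your ``in particular'' step (combining the first claim with Lemma \ref{lem:fix_point_i}) is fine and agrees with the paper.
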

\begin{proof} By Lemma \ref{lem:fix_point_i} we only need to consider the case that $pq$ is even. Suppose a singular point on $\Orb_g$ is fixed by $i$. We have seen in c) that this singular point has a single preimage in $\tilde \Orb_g$ whose corresponding fiber $S^1_c$ of $\tilde{\mathcal{F}}_g$ and its orientation are $\Gamma$-invariant. Therefore the map $\tilde{i}$ also leaves the fiber $S^1_c$ invariant (cf. proof of Lemma \ref{lem:commute}) but it reverses its orientation by Lemma \ref{lem:orientation_i_tilde}. Moreover, as in the proof of Lemma \ref{lem:commute} we see that $\Gamma$ and $\tilde{i}$ generate a finite group acting on $S^1_c$, and that the action can be assumed to be linear. The fact that the orientation is not preserved by this action implies that the generated group must be a dihedral group. Since $\Gamma$ and $\tilde{i}$ commute by Lemma \ref{lem:commute}, this dihedral group must be abelian. The only abelian dihedral groups have order $2$ and $4$, respectively. Since the action of $\Gamma$ on $S^1_c$ is effective (as a restriction of a deck transformation action), it follows that $2\leq p+q=|\Gamma|\leq 2$. This contradicts the existence of a singular point on $\Orb$ of even order and thus the first claim follows. Now, the second claim is a consequence of Lemma \ref{lem:fix_point_i}.
\end{proof}
Consequently, in any case we have $\Orb_g=S^2(|\Gamma_g|,|\Gamma_g|)$ and so it remains to determine the order of $\Gamma_g$.
\newline
\newline
\noindent e)
An example of a Seifert fibering on $S^3$ is the Hopf fibration $\mathcal{H}$ defined by the free $S^1$-action $\varphi^+$ (or $\varphi^-$)
\[
	\begin{array}{cccl}
		  \varphi^{\pm}: & S^1 \times S^3				& \LTo  	& S^3 \\
		         & (e^{it},(z_1,z_2)) 	& \LMTo  & (e^{it}z_1,e^{\pm it}z_2).
	\end{array}
\]
Since the actions of $\varphi^{+}$ and $\varphi^{-}$ commute, they induce almost free actions $\varphi^{\pm}: S^1 \times L(r,1)	\LTo L(r,1)$ and Seifert fiberings $\mathcal{F}^{\pm}$ on $L(r,1)$ where $L(r,1)$ is the quotient of $S^3$ by the action of $\varphi^{+}$ restricted to the $r$-th roots of unity in $S^1$. The following lemma shows that these are the only actions that can occur in our situation up to conjugation. For a systematic classification of Seifert fiberings of lens spaces we refer the reader to \cite{GL16}. Given Lemma \ref{lem:seifert_classif} the following lemma is contained therein as a special case \cite[Example~4.17 and Section 5]{GL16}. Here we give a short, self-contained proof.

\begin{lem}\label{lem:lens} Let $\varphi :  S^1 \times L(r,1) \LTo L(r,1)$, $r>1$, be a smooth, almost free action with quotient orbifold of type $S^2(k,k)$ for some positive integer $k$. If there are no exceptional fibers, i.e. if $k=1$, then $\varphi$ is smoothly conjugated to $\varphi^+$. If $k>1$, then $\varphi$ is smoothly conjugated to $\varphi^-$. In the latter case we have $r=\kappa k$, where $\kappa$ is $1$ or $2$ depending on whether $r$ is odd or even. In particular, $r$ is divisible by $4$ if $k$ is even. 
\end{lem}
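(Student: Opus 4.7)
The plan is to lift $\varphi$ to an almost free $S^1$-action on $S^3$, show that the lift is free, and identify it with one of the two standard Hopf actions $\varphi^{\pm}$ by comparing the quotients explicitly. By Remark~\ref{rem:lifting} the action $\varphi$ lifts to a smooth, almost free $S^1$-action $\tilde{\varphi}$ on $S^3$ that commutes with the deck group $\Gamma\cong\Z_r$ of the covering $S^3\to L(r,1)$. Setting $\tilde{\Orb}_g := S^3/\tilde{\varphi}$, the surjection $\pi_1(S^3)\twoheadrightarrow\pi_1^{\mathrm{orb}}(\tilde{\Orb}_g)$ of \cite[Lem.~3.2]{MR0705527} forces $\tilde{\Orb}_g$ to be simply connected, hence one of $S^2$, $S^2(n)$, or $S^2(p,q)$ with $\mathrm{gcd}(p,q)=1$. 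The group $\Gamma$ descends (via its effective quotient $\Gamma_g$) to an orientation-preserving cyclic action on $\tilde{\Orb}_g$ with quotient $\Orb_g=S^2(k,k)$; a short case analysis rules out $S^2(n)$ and $S^2(p,q)$ with $p\neq q$, since in both cases the cyclic action would have to fix the exceptional points and produce a quotient of type $S^2(|\Gamma_g|n,|\Gamma_g|)$ or $S^2(|\Gamma_g|p,|\Gamma_g|q)$ with $p\neq q$, neither of which is of the form $S^2(k,k)$. Hence $\tilde{\Orb}_g\cong S^2$ and $\tilde{\varphi}$ is free.

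Together with $\Gamma$, the free action $\tilde{\varphi}$ defines a smooth, almost free action of the compact abelian group $T=S^1\times\Z_r$ on $S^3$. Applying the classification of Seifert fiberings on lens spaces from \cite[Thm.~5.1]{GL16} combined with Lemma~\ref{lem:seifert_classif} (or, equivalently, smoothly linearizing the combined $T$-action on $S^3$), one may conjugate the setup to the standard linear model in which the deck generator acts as $(z_1,z_2)\mapsto(\zeta z_1,\zeta z_2)$ with $\zeta=e^{2\pi i/r}$ and the $S^1$-part acts diagonally with coprime integer weights $(1,\pm 1)$, i.e.\ as $\varphi^+$ or as $\varphi^-$. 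Consequently $\varphi$ is smoothly conjugate to the descent of $\varphi^+$ or of $\varphi^-$ on $L(r,1)$, and it remains to compute the two corresponding quotients.

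Since $\Gamma$ is contained in $\varphi^+(S^1)$, one has $L(r,1)/\varphi^+ = S^3/\varphi^+(S^1) = S^2$, giving $k=1$. For $\varphi^-$ the Hopf-type map $(z_1,z_2)\mapsto(2z_1 z_2,\,|z_1|^2-|z_2|^2)\in\Co\times\R$ identifies $S^3/\varphi^-$ with $S^2$, and under it the deck generator acts on the base as the rotation $(u,v)\mapsto(\zeta^2 u,v)$, which has order $r/\mathrm{gcd}(r,2)$. Hence $L(r,1)/\varphi^- = S^2(r/\kappa,r/\kappa)$ with $\kappa=\mathrm{gcd}(r,2)$, so that $r=\kappa k$; in particular $4\mid r$ whenever $k$ is even. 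The hard part of the argument will be the conjugation/linearization step in the second paragraph, where one must arrange the conjugating diffeomorphism of $S^3$ to descend to $L(r,1)$; once that is secured, everything reduces to the explicit Hopf-map computation above.
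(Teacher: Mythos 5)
Your argument is essentially correct, but it takes a genuinely different route from the paper, and the difference is precisely the point the paper is trying to make. You reduce everything to the external classification of Seifert fibrations of lens spaces: the citation of \cite[Thm.~5.1]{GL16} together with Lemma \ref{lem:seifert_classif} is what lets you pass from the abstract action $\varphi$ to a linear model on $S^3$ descending to $L(r,1)$. The paper explicitly acknowledges that this works --- it states just before the lemma that, given Lemma \ref{lem:seifert_classif}, the lemma ``is contained therein as a special case'' of \cite{GL16} --- but then deliberately gives a short \emph{self-contained} proof instead: it decomposes $(L(r,1),\mathcal{F})$ into two fibered solid tori $T_1\cup_\psi T_2$ with exceptional fibers of order $k$, normalizes $\psi(m_1)\sim m_2+rl_2$ using the $L(r,1)$ hypothesis, and then solves for the homology classes of the regular fibers $f_i\sim b_im_i+\varepsilon k l_i$, which forces exactly the three cases $(\varepsilon,k,b)=(1,1,0)$, $(-1,r/2,1)$, $(-1,r,2)$; these invariants determine the fiber-diffeomorphism type, so matching them against $\mathcal{F}^{\pm}$ finishes the proof. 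What your approach buys is conceptual transparency (free lift on $S^3$, explicit Hopf-type quotient map, clean derivation of $r=\kappa k$); what the paper's approach buys is independence from \cite{GL16}. Your freeness argument for the lift (paragraph one) and the final quotient computation for $\varphi^{\pm}$ (paragraph three) are both correct and are close to what the paper does elsewhere (Section \ref{sub:spindle}, steps c) and f)).

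One concrete caveat: your parenthetical claim that one could ``equivalently'' proceed by ``smoothly linearizing the combined $T$-action on $S^3$'' is not an available shortcut. That every smooth almost free $S^1\times\Z_r$-action on $S^3$ is conjugate to a linear one is essentially equivalent to the statement being proved, so presenting it as an alternative to the citation is circular. You correctly identify the descent of the conjugating diffeomorphism to $L(r,1)$ as ``the hard part,'' but you do not carry it out: if the reader accepts \cite[Thm.~5.1]{GL16} your proof is complete, and if not, the gap sits exactly there --- which is the gap the paper's solid-torus computation is designed to close.
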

\begin{rem} In the situation of the lemma the quotient orbifold is actually always a spindle orbifold. The only real assumption is that the orders of the singularities coincide.
\end{rem}
\begin{proof}
Let $\mathcal{F}$ be the Seifert fibering defined by $\varphi$. By Lemma \ref{lem:seifert_classif} it is sufficient to show that the Seifert invariants of $\mathcal{F}$ coincide up to orientation with either those of $\mathcal{F}^{+}$ or $\mathcal{F}^{-}$.

Since $L(r,1)/\mathcal{F}$ is a $S^2(k,k)$ orbifold by assumption, the Seifert fiber space $(L(r,1),\mathcal{F})$ can be obtained as $T_1 \cup_{\psi} T_2$ where $T_1$ and $T_2$ are fibered solid tori with an exceptional fiber of order $k$ and where $\psi: \partial T_1 \To \partial T_2$ is a fiber-preserving homeomorphism \cite[Thm.~1.4.5.]{Brin}.  Choose meridians $m_1$ and $m_2$ on $\partial T_1$ and $\partial T_2$ (cf. Section \ref{sub:lens_spaces}) and a longitude $l_2$ on $\partial T_2$ (cf. Section \ref{sub:lens_spaces}). In homology we have $\psi(m_1) \sim s' m_2 + r' l_2$ for some integers $s',r'$. Since $T_1 \cup_{\psi} T_2$ is a $L(r,1)$ lens space with $r>1$ by assumption, we have $r' = \varepsilon_1 r$ and $s'= \varepsilon_2 + t r\neq 0$ for some integer $t$ and some $\varepsilon_1,\varepsilon_2 \in \{ \pm 1 \}$ (cf. Section \ref{sub:lens_spaces}). Replacing $l_2$ by $tm_2+ \varepsilon_1 l_2$ and $m_2$ by $\varepsilon_2 m_2$ we can assume that $\psi(m_1) \sim m_2 + r l_2$. In particular, $l_1= \psi^{-1}(l_2)$ is a longitude of $\partial T_2$. Let $f_2$ be a regular fiber on $\partial T_2$. Possibly after reversing the orientation of $f_2$ we have $f_2 \sim b_2 m_2 + k l_2$ for some integer $b_2$. The preimage $f_1= \psi^{-1}(f_2)$ is a regular fiber on $\partial T_1$ and we have $f_1 \sim b_1 m_1 + \varepsilon k l_1$ for some integer $b_1$ and some $\varepsilon \in \{ \pm 1 \}$. Since $f_1$ and $f_2$ are without self-intersections, we have $\mathrm{gcd}(b_1,k)=\mathrm{gcd}(b_2,k)=1$. Now the conditions $\psi(m_1) \sim m_2 + r l_2$, $\psi(l_1) \sim l_2$ and $\psi(f_1)=f_2$ imply that $b_1=b_2$ and $rb_1=k(1- \varepsilon)$. Hence, because of $\mathrm{gcd}(b_1,k)=1$, we are in one of the following three mutually exclusive cases
\begin{enumerate}
\item $\varepsilon=1$, $k=1$, $b_1=b_2=0$.
\item $\varepsilon=-1$, $2k=r$ even, and  $b_1=b_2=1$.
\item $\varepsilon=-1$, $k=r$ odd, and  $b_1=b_2=2$.
\end{enumerate}
Since these data completely determine the fiber-homomorphism type of $T_1 \cup_{\psi} T_2$ and since the same argument applies to $(L(r,1),\mathcal{F}^{\pm})$ the claim follows.
\end{proof}
More specifically, a computation shows (see \cite[Example~4.17]{GL16}) that the Seifert invariants of the Seifert fibered spaces occurring in the lemma are given as follows.
\begin{enumerate}
\item $(L(r,1),\mathcal{F})=M(0;(1,r))=(L(r,1),\mathcal{F}^+)$ with $k=1$
\item $(L(r,1),\mathcal{F})=M(0;(k,1),(k,1))=(L(r,1),\mathcal{F}^-)$ with $r=2k$ even
\item $(L(r,1),\mathcal{F})=M(0;(k,\frac{1+k}{2}),(k,\frac{1-k}{2}))=(L(r,1),\mathcal{F}^-)$ with $r=k$ odd.
\end{enumerate}
\noindent f) In case of a sphere, i.e. when $p=q=1$, we have $|\Gamma_g| \in\{1,2\}$ and thus $|\Gamma_g|=1$ and $\Orb_g=S^2$ by the last claim of Lemma \ref{lem:lens}. Suppose that $p+q>2$. According to Lemma \ref{lem:lens} $(M,\mathcal{F}_g)$ is either smoothly conjugated to $(L(p+q,1),\mathcal{F}^+)$ or to $(L(p+q,1),\mathcal{F}^-)$. Hence, we can assume that $\tilde{\mathcal{F}}_g$ is the Hopf fibration $\mathcal{H}$ on $S^3$ defined by $\varphi^+$ and that $\Gamma \cong \Z_{p+q}$ acts linearly via $\varphi^+$ or $\varphi^-$. Let $S^1_g$ be a $\Gamma$-invariant fiber of $\tilde{\mathcal{F}}_g$. By Lemma \ref{lem:exceptional_geo} the fibers $S^1_g$ and $\tilde{i}(S^1_g)$ are disjoint. Recall that the fibers of $\tilde{\mathcal{F}}_g$ come along with a natural orientation and that the map $\tilde{i}: S^1_g \To \tilde{i}(S^1_g)$ reverses these orientations. Let $\gamma$ be a generator of $\Gamma$ that rotates $S^1_g$ about a minimal angle in positive direction with respect to the natural orientation (cf. Section \ref{sub:linear} and Figure \ref{fig:rotations} and note that $\mathrm{ord}(\gamma)\geq 3$). Since $\Gamma$ and $\tilde{i}$ commute by Lemma \ref{lem:commute}, it follows that $\gamma$ rotates $S^1_g$ and $\tilde{i}(S^1_g)$ in different directions as depicted in Figure \ref{fig:rotations} in the case $p+q=5$. Now the fact that a generator of the $\Z_{p+q}$-action on $(S^3,\mathcal{H})$ via $\varphi^+$ rotates all fibers in the same direction implies that $\mathcal{F}_g=\mathcal{F}^-$ in view of Lemma \ref{lem:lens}. In particular, we have $\Orb_g=S^2((p+q)/\kappa,(p+q)/\kappa)$ with $\kappa$ being $1$ or $2$ depending on whether $p+q$ is odd or even by that lemma. Moreover, by Lemma \ref{lem:fix_point_i} it follows that $\Orb_g/i$ is either $\RP^2((p+q)/\kappa)$ or $D^2((p+q)/\kappa;)$ depending on whether $pq$ is odd or even. In particular, the geodesic periods are $(\overline{(p+q)/2})$ or $(1_{\infty},\overline{(p+q)/\kappa})$ depending on whether $pq$ is odd or even as discussed in Section \ref{sub:orb_of_geo}.
\begin{figure}
	\centering
		\includegraphics[width=0.5\textwidth]{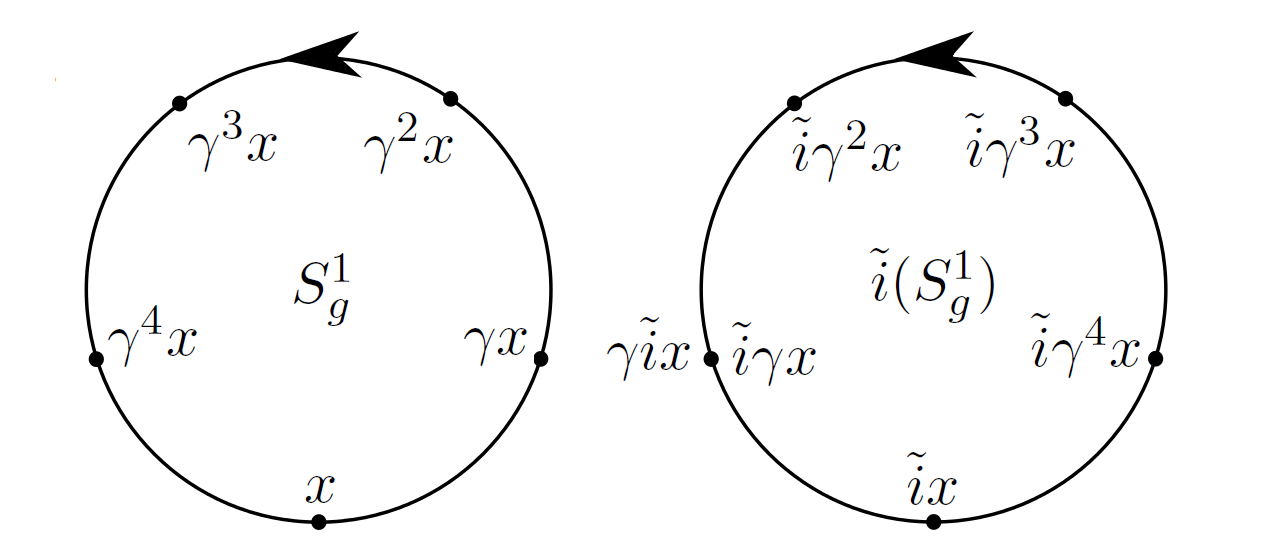}
	\caption{Illustration of an argument in Section \ref{sub:spindle}, f) in the case $p+q=5$. Note that $\gamma$ and $\tilde i$ commute by Lemma \ref{lem:commute}.}
	\label{fig:rotations}
\end{figure}
\begin{rem}\label{rem:spherical_case}Observe that in the case $p=q$ we have $\Orb \cong S^2/\mathrm{C}_p \cong \Orb_g$ and $\Orb_g/i \cong S^2/\left\langle \mathrm{C}_p,-1\right\rangle$, where $\mathrm{C}_p<\SOr(2) \subset \SOr(3)$ is a cyclic group of order $p$.
\end{rem}

\subsection{The real projective plane}\label{sub:pro_plane}

In this section we apply the above analysis to the real projective plane $\Orb=\RP^2$ endowed with a Besse metric. The unit tangent bundle $M=T^1 \RP^2$ of $\RP^2$ is homeomorphic to the lens space $L(4,1)$ \cite{MR1907070}. The fiberings $\mathcal{F}_g$ and $\mathcal{F}_t$ on $M$ are defined as in Section \ref{sub:spindle}, step b). The fibering $\mathcal{F}_g$ lifts to a fibering on the universal covering $\tilde M = S^3$ as in Section \ref{sub:spindle} by Remark \ref{rem:lifting}. The main difference to that section is that in the present case the fibering $\mathcal{F}_t$ is not of type $o_1$ since the quotient $M/\mathcal{F}_t = \RP^2$ is not orientable. The fibers of $\mathcal{F}_t$ cannot be oriented in a continuous way. Still, $\mathcal{F}_t$ lifts to the natural fibering of type $o_1$ on $T^1 S^2$ defined by $T^1 S^2 \To S^2$ and thus to a fibering $\tilde{ \mathcal{F}}_t$ on $\tilde M$ by Remark \ref{rem:lifting}. As in Section \ref{sub:spindle} Remark \ref{rem:lifting} also shows that the group $\Gamma = \mathrm{Deck}(\tilde{M} \To M)\cong \Z_4$ preserves the orientations of the fibers of $\tilde{\mathcal{F}}_g$. However, this argument does not apply to $\tilde{ \mathcal{F}}_t$ since $\mathcal{F}_t$ is not of type $o_1$. In fact, the deck transformation of the covering $T^1 S^2 \To M$ reverses the orientations of the fibers of $T^1 S^2 \To S^2$ and thus a generator of $\Gamma$ reverses the orientations of the fibers of $\tilde{ \mathcal{F}}_t$.

We choose a lift $\tilde{i}: \tilde M \To \tilde M$ of the involution $i: M \To M$, defined in Section \ref{sub:orb_of_geo}, that covers the natural lift $i: T^1 S^2 \To T^1 S^2$. As in Lemma \ref{lem:orientation_i_tilde} the map $\tilde{i}$ preserves the orientations of the fibers of $\tilde{ \mathcal{F}}_t$ while it reverses the orientation of the fibers of $\tilde{\mathcal{F}}_g$. The fact that in this case, compared to the situation in Section \ref{sub:spindle}, $\Gamma$ does not preserve the orientations of the fibers of $\tilde{ \mathcal{F}}_t$ results in the following statement converse to Lemma \ref{lem:commute}.

\begin{lem} \label{lem:no_commute} The group $\Gamma$ is normalized by $\tilde i$ and for a generator $\gamma$ of $\Gamma$ we have $\gamma \tilde i= \tilde i \gamma^3$.
\end{lem}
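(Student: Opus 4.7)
The relation $\gamma\tilde i=\tilde i\gamma^3$ is equivalent to $\tilde i\gamma\tilde i^{-1}=\gamma^3$. Since $\tilde i$ is a lift of $i:M\to M$ and $\Gamma=\mathrm{Deck}(\tilde M\to M)$, the conjugate $\tilde i\gamma\tilde i^{-1}$ projects to $i\cdot\mathrm{id}_M\cdot i^{-1}=\mathrm{id}_M$ and hence lies in $\Gamma$, showing that $\tilde i$ normalizes $\Gamma$. Since $\mathrm{Aut}(\Z_4)$ consists of the identity and the inversion, one has $\tilde i\gamma\tilde i^{-1}\in\{\gamma,\gamma^3\}$, and the task is to rule out the commuting case.

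I would then analyze the action of $\tilde i$ on the $\tilde{\mathcal{F}}_t$-fibers. By construction $\tilde i$ covers the natural involution $(x,v)\mapsto(x,-v)$ of $T^1S^2$, which preserves each fiber of $T^1S^2\to S^2$ and restricts to rotation by $\pi$ on it. Such a fiber represents the generator of $\pi_1(T^1S^2)=\pi_1(\SOr(3))=\Z_2$, so its preimage in $\tilde M=S^3$ under the double cover $S^3\to T^1S^2$ is a single circle double covering it. Consequently $\tilde i$ leaves every $\tilde{\mathcal{F}}_t$-fiber invariant and restricts on each to a lift of rotation by $\pi$, i.e.\ a rotation by $\pm\pi/2$. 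In particular $\tilde i$ has global order $4$, and $\tilde i^2$ acts on every fiber as rotation by $\pi$. The unique non-identity element of $\Gamma$ preserving every $\tilde{\mathcal{F}}_t$-fiber acts there as the deck transformation of $S^3\to T^1S^2$, namely by rotation by $\pi$, so $\tilde i^2=\gamma^2$. Because $\tilde{\mathcal{F}}_t$ is of type $o_1$ with base $\tilde\Orb_t\cong S^2$ and thus has only regular fibers, Lemma \ref{lem:rot_same_direction} applied to $\tilde i$ now yields that $\tilde i$ rotates all $\tilde{\mathcal{F}}_t$-fibers in the \emph{same} direction relative to the continuous orientation.

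The remaining step is a coordinate computation on a single fiber. Fix $S^1_t\in\tilde{\mathcal{F}}_t$ and set $S^1_{t'}=\gamma(S^1_t)$. Using the continuous orientations, identify both with $\R/2\pi\Z$ and translate so that $\tilde i$ acts on each as $\theta\mapsto\theta+\pi/2$ (possible by the uniform rotation direction). Since $\gamma$ reverses orientations of $\tilde{\mathcal{F}}_t$-fibers, $\gamma|_{S^1_t}(\theta)=-\theta+c$ for some constant $c$, and
\[
\tilde i\gamma\tilde i^{-1}(\theta)=\tilde i\bigl(-(\theta-\pi/2)+c\bigr)=-\theta+c+\pi,
\]
while $\gamma^3|_{S^1_t}(\theta)=\gamma^2(\gamma(\theta))=(-\theta+c)+\pi$ since $\gamma^2$ is rotation by $\pi$ on every $\tilde{\mathcal{F}}_t$-fiber. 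The two expressions agree, so $\tilde i\gamma\tilde i^{-1}$ and $\gamma^3$ coincide on $S^1_t$; being elements of $\Gamma$, which acts freely on $\tilde M$, they then coincide everywhere. The delicate point, and the reason the preceding paragraph is needed, is the global consistency: a bare fiberwise calculation does not exclude the commuting alternative, because the sign of $\tilde i$'s rotation on a single fiber is a priori ambiguous, and only Lemma \ref{lem:rot_same_direction} turns the local computation into a globally well-defined identity in $\Gamma$.
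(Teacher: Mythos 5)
Your proof follows essentially the same route as the paper's: normalization is automatic for a lift of an involution, $\tilde i$ preserves the $\tilde{\mathcal{F}}_t$-fibers because their preimages in $S^3$ are connected, $\tilde i$ has order $4$ on each fiber with $\tilde i^2=\gamma^2$, and Lemma \ref{lem:rot_same_direction} combined with the fact that $\gamma$ reverses the fiber orientations rules out the commuting alternative. The one step that is looser than the paper's is the assertion that $\gamma|_{S^1_t}(\theta)=-\theta+c$ in coordinates in which $\tilde i$ is \emph{simultaneously} the literal rotation $\theta\mapsto\theta+\pi/2$ on both fibers: orientation-reversal alone does not make a homeomorphism affine, so you should either justify linearizing the whole order-$8$ group $\langle\gamma,\tilde i\rangle$ on $S^1_t\cup\gamma S^1_t$ at once (which does work: linearize $\langle\tilde i\rangle$ on $S^1_t$ and transport the coordinate to $\gamma S^1_t$ via $\gamma$, after which $\tilde i|_{\gamma S^1_t}$ is automatically linear because $\Gamma$ normalizes $\langle\tilde i\rangle$), or replace the computation by the coordinate-free observation that conjugation by the orientation-reversing map $\gamma$ would flip the rotation direction of $\tilde i$, contradicting Lemma \ref{lem:rot_same_direction} if $\gamma$ and $\tilde i$ commuted --- which is in substance what the paper's argument with the cyclic order of the orbit $\langle\tilde i\rangle x$ does.
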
 
\begin{figure}
	\centering
		\includegraphics[width=\textwidth]{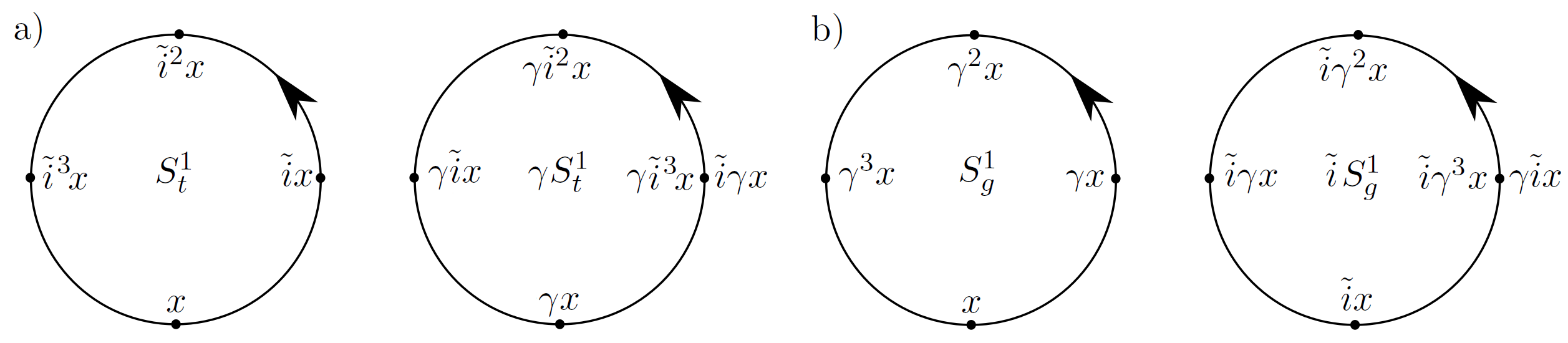}
	\caption{a) Illustration of an argument in Lemma \ref{lem:no_commute}. b) Illustration of an analogous argument. Note that $\tilde i \Gamma x= \Gamma \tilde i x$ since $\tilde i$ normalizes $\Gamma$.}
	\label{fig:rotations2}
\end{figure}
\begin{proof} As a lift of an involution of $M$ the map $\tilde i$ normalizes the group $\Gamma=\mathrm{Deck}(\tilde M\To M)$. Since $S^2$ is simply connected as an orbifold, the preimages of the fibers of $T^1S^2 \To S^2$ under the two-fold covering $\tilde{M}=S^3 \To T^1S^2$ are connected and $\Gamma_0 = \mathrm{Deck}(\tilde{M} \To T^1S^2)\cong \Z_2$ leaves the fibers of  $\tilde{ \mathcal{F}}_t$ invariant. Therefore also the lift $\tilde{i}: \tilde{M} \To \tilde{M}$ leaves the fibers of $\tilde{ \mathcal{F}}_t$ invariant (cf. proof of Lemma \ref{lem:commute}). Moreover, we have $\tilde{i}^2 \in \Gamma_0$ and we know that $\tilde{i}$ and $\Gamma_0$ commute by Lemma \ref{lem:commute}. Since both $\Gamma_0$ and $\tilde i$ leave some fiber $S_x^1$ of $\tilde{ \mathcal{F}}_t$ and its orientation invariant, their restrictions to $S_x^1$ generate a cyclic group (cf. Section \ref{sub:linear}). In a cyclic group at most a single nontrivial element squares to the identity, in this case the generator of $\Gamma_0$. Therefore, the fact that $\tilde{i}$ is not contained in $\Gamma_0$ implies that $\tilde{i}$ has order $4$ and that $\tilde i^2 = \gamma^2$ for some generator $\gamma$ of $\Gamma$. This shows that the restriction of $\tilde i$ to each fiber of $\tilde{\mathcal{F}}_t$ has order $4$, since $\gamma^2$ is a deck transformation distinct from the identity. Since $\tilde i$ is not contained in $\Gamma$ and normalizes $\Gamma$, the lift $\tilde i$ and $\Gamma$ generate a group $\tilde \Gamma$ of order $8$. In particular, the cyclic group $I$ of order $4$ generated by $\tilde i$ is normalized by $\Gamma$ (as an index $2$ subgroup of $\tilde \Gamma$). Let $S^1_{t}$, $\gamma S^1_{t}$ be a pair of fibers of $\tilde{\mathcal{F}}_t$ invariant under $\Gamma$ and pick a point $x \in S^1_{t}$. By Lemma \ref{lem:rot_same_direction} the map $\tilde i$ rotates $S^1_{t}$ and $\gamma S^1_{t}$ in the same direction. Recall that the map $\gamma : S^1_{t} \To \gamma S^1_{t}$ reverses orientations. We can assume that the actions of $\tilde i$ on $S^1_{t}$ and $\gamma S^1_{t}$ are linear (cf. Section \ref{sub:linear}). Because of $\gamma I x= I \gamma x$ it follows that the situation is as depicted in Figure \ref{fig:rotations2}, a). In particular, $\gamma \tilde i x$ and $\tilde i \gamma x$ differ and hence $\tilde i$ and $\Gamma$ do not commute, i.e. the generator $\tilde i^{-1} \gamma \tilde i$ of $\Gamma$ is distinct from $\gamma$. The only other generator of $\Gamma\cong \Z_4$ is $\gamma^3$ and so the second claim follows, too. 
\end{proof}
We claim that $\Orb_g=S^2$ and thus that all geodesics on $\RP^2$ have the same period. This was already shown in \cite{MR2481742} in a different way. The claim follows analogously as in Section \ref{sub:spindle} from Lemma \ref{lem:no_commute}: By the same reason as in Section \ref{sub:spindle} the $S^1$-action on $M$ that is induced by the geodesic flow and that defines $ \mathcal{F}_g$ satisfies the assumption of Lemma \ref{lem:lens}. More precisely, the arguments in Section \ref{sub:spindle} c) show that $S^3/\tilde{\mathcal{F}}_g\cong S^2(p_g,q_g)$ 
for coprime $p_g$, $q_g$, that $\Gamma$ fixes the singularities of $S^3/\tilde{\mathcal{F}}_g$ and preserves its orientation and thus that $M/\mathcal{F}_g \cong S^2(kp_g,kq_g)$ for some $k||\Gamma|$. Moreover, the fact that $i$ acts freely on $\Orb_g$ by Lemma \ref{lem:fix_point_i} implies $p_g=q_g=1$ as in Section \ref{sub:spindle}, d). Hence, by Lemma \ref{lem:lens} $(M,\mathcal{F}_g)$ is either smoothly conjugated to $(L(4,1),\mathcal{F}^+)$ or to $(L(4,1),\mathcal{F}^-)$, we can assume that $\tilde{\mathcal{F}}_g$ is the Hopf fibration $\mathcal{H}$ on $S^3$ defined by $\varphi^+$, and that $\Gamma \cong \Z_{4}$ acts linearly via $\varphi^+$ or $\varphi^-$, respectively. Now let $S^1_g$ be a $\Gamma$-invariant fiber of $\tilde{\mathcal{F}}_g$ and let $\gamma$ be a generator of $\Gamma$ that rotates $S^1_g$ about a minimal angle in positive direction with respect to the natural orientation (cf. Figure \ref{fig:rotations2}, b) and Section \ref{sub:linear}). Since $\Gamma$ acts freely on $S^1_g$, the $\Gamma$-orbit of a point $x \in S^1_g$ has order four as depicted in Figure \ref{fig:rotations2}, b). Since $\RP^2$ has no orbifold singularities, the fibers $S^1_g$ and $\tilde{i}S^1_g$ are distinct by Lemma \ref{lem:fix_point_i}. Now the fact that $\tilde i$ reverses the orientation of the $\tilde{\mathcal{F}}_g$-fibers (see Lemma \ref{lem:orientation_i_tilde}) and Lemma \ref{lem:no_commute} imply that the $\tilde i$-images of $S^1_g$ and $\Gamma x$ look as depicted in Figure \ref{fig:rotations2}, b). In particular, we see that $\gamma$ rotates $S^1_g$ and $\tilde{i}S^1_g$ in the same direction. Observe that in case of a $\Z_{4}$-action on $\mathcal{H}$ via $\varphi^-$ the only two $\Gamma$-invariant fibers are rotated in different directions (cf. Section \ref{sub:linear}). We conclude that $\mathcal{F}_g=\mathcal{F}^+$, $\Orb_g \cong S^2$ and $\Orb_g /i\cong \RP^2$ as orbifolds by Lemma \ref{lem:lens} and Lemma \ref{lem:fix_point_i}. In particular, all geodesics have the same length.

In Section \ref{sub:rigidity}, we show how this result can be used to deduce rigidity on the real projective plane. This implies that every Besse orbifold covered by the real projective plane has constant curvature.

\subsection{Non-orientable half-spindle orbifolds} \label{sub:nonorbad}
In this section we assume $\Orb$ to be a Besse $2$-orbifold of type $D^2(;p,q)$. Let $\hat \Orb$ be its orientable double cover of type $S^2(p,q)$ and let $s$ be the deck transformation of the covering $\hat \Orb \To \Orb$. From Section \ref{sub:spindle} we know that $\hat \Orb_g \cong S^2((p+q)/\kappa,(p+q)/\kappa))$ where $\kappa$ is $1$ or $2$ depending on whether $p+q$ is odd or even. Since the action of $s$ on $\hat \Orb_g$ preserves the orientation and can be linearized (cf. Section \ref{sub:linear}), it is either trivial or fixes precisely two points (cf. Section \ref{sub:orb_of_geo}). We claim that the latter is always the case. To see this it suffices to find a geodesic that is not invariant under $s$. If $pq$ is odd, then there are no self-inverse geodesics $\hat \Orb$ and so any geodesic that hits $\mathrm{Fix}(s)\subset \hat \Orb$ perpendicularly is not invariant under $s$. For even $pq$ the geodesics in $\mathrm{Fix}(s)\subset \hat \Orb$ hit a singular point of even order and are thus regular by Lemma \ref{lem:exceptional_geo}. Choose a regular geodesic $c$ that lies in $\mathrm{Fix}(s) \subset \hat{\Orb}$ and starts at the singularity $x \in \hat{\Orb}$ of order $p$ in a direction $v\in T^1_x \hat\Orb$. By continuity there is a neighborhood $U$ of $v$ in $T^1_x \hat\Orb$ such that any geodesic that starts at $x$ in a direction $\tilde v \in U$ intersects $U$ at most once. We can assume that $U$ is invariant under $s$. It follows that any geodesic starting at $x$ in a direction $\tilde v \in U$ which is different from $v$ is not invariant under $s$. Hence, in any case $s$ fixes precisely two points on $\hat{\Orb}_g$. Among the fixed points on $\hat{\Orb}_g$ are the geodesics contained in $\mathrm{Fix}(s) \subset \hat{\Orb}$. As already mentioned, for even $pq$ these geodesics are regular by Lemma \ref{lem:exceptional_geo}. In the presence of the symmetry $s$ this property holds regardless of the parity of $pq$:

\begin{lem}\label{lem:reg_in_fixset} The geodesics contained in $\mathrm{Fix}(s) \subset \hat{\Orb}$ are regular.

\end{lem}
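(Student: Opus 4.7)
\begin{prf}
The plan is first to reduce to the case $pq$ odd (the case $pq$ even is already handled by Lemma~\ref{lem:exceptional_geo}, since one of the corresponding singularities then has even order) and then to show, by lifting everything to the universal cover, that the involution $\hat s : M \to M$ induced by $s$ must swap the two exceptional fibers of $\mathcal{F}_g$ rather than fix them. A short computation with $ds$ along $\gamma := \mathrm{Fix}(s)$ identifies $\mathrm{Fix}(\hat s)\cap M$ with the unit tangent vectors tangent to $\gamma$, giving two $\mathcal{F}_g$-fibers $F_1,F_2$ (the two orientations of $\gamma$); it thus suffices to show $F_1, F_2$ are not the two exceptional fibers of $\mathcal{F}_g$ on $M$.

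From Section~\ref{sub:spindle}(f) one may identify $\tilde M \cong S^3 \subset \C^2$ so that $\tilde{\mathcal{F}}_g$ is the Hopf fibration defined by $\varphi^+$ and $\Gamma \cong \Z_{p+q}$ acts via $\varphi^-$; the two exceptional fibers of $\mathcal{F}_g$ on $M$ then correspond to the only two $\Gamma$-invariant Hopf fibers, namely the images of $\{z_1 = 0\}$ and $\{z_2 = 0\}$ in $M$. Pick a lift $\tilde s : S^3 \to S^3$ of $\hat s$. Because $\hat s$ commutes with the geodesic flow, $\tilde s$ commutes with the $\varphi^+$-action, and after equivariantly linearizing the finite group action of $\langle \tilde s, \Gamma\rangle$ on $S^3$ we may take $\tilde s \in \mathrm U(2)$. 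Since $s$ reverses the orientation of each $\mathcal{F}_t$-fiber, $\hat s_*$ acts as $-1$ on $\pi_1(M) \cong \Z_{p+q}$, so $\tilde s\,\gamma\,\tilde s^{-1} = \gamma^{-1}$ for every $\gamma \in \Gamma$. This conjugation relation forces $\tilde s$ to exchange the two coordinate axes of $\C^2$, so $\tilde s$ has the anti-diagonal form $\tilde s(z_1, z_2) = (a z_2, b z_1)$ with $|a|=|b|=1$. In particular $\tilde s$ interchanges the Hopf fibers $\{z_1 = 0\}$ and $\{z_2 = 0\}$, so $\hat s$ interchanges the two exceptional $\mathcal{F}_g$-fibers of $M$. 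Since $F_1$ and $F_2$ are pointwise fixed by $\hat s$, they cannot be these exceptional fibers and must therefore be regular.

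The main obstacle I anticipate is the equivariant linearization step: one must argue that the finite group $\langle \tilde s, \Gamma\rangle$ acting smoothly on $S^3$ can be conjugated to act linearly, so that $\tilde s$ lies in $\mathrm U(2)$. Once this is granted, the algebraic form of $\tilde s$ and the resulting swap of the two exceptional fibers follow immediately from the commutation relation $\tilde s\,\gamma\,\tilde s^{-1} = \gamma^{-1}$, and the conclusion is then immediate.
\end{prf}
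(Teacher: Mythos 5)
Your overall strategy is sound and genuinely different from the paper's: reducing to $pq$ odd via Lemma \ref{lem:exceptional_geo} and then showing that the involution induced by $s$ on $T^1\hat{\Orb}$ \emph{interchanges} the two exceptional fibers of $\mathcal{F}_g$ would indeed prove the lemma, since the fibers coming from $\mathrm{Fix}(s)$ are pointwise fixed. But the step you flag yourself is a genuine gap, not a routine verification. You need the finite group $\langle\tilde s,\Gamma\rangle$ acting smoothly on $S^3$ to be conjugate to a linear action, \emph{and} you need that conjugation to be compatible with the identifications already fixed in Section \ref{sub:spindle}, f) ($\tilde{\mathcal{F}}_g$ the Hopf fibration via $\varphi^+$, $\Gamma$ linear via $\varphi^-$); otherwise the conclusion ``$\tilde s\in\mathrm{U}(2)$, hence anti-diagonal'' does not follow. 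Linearizability of smooth finite group actions on $S^3$ is a deep theorem (it subsumes the Smith conjecture and in general rests on geometrization), and nothing in this paper's toolkit supplies it; the paper's arguments are deliberately arranged so that only actions on \emph{circles} are ever linearized, which is elementary (Section \ref{sub:linear}). A secondary problem: you deduce $\tilde s\gamma\tilde s^{-1}=\gamma^{-1}$ from ``$\hat s_*=-1$ on $\pi_1(M)\cong\Z_{p+q}$ because $s$ reverses the orientation of the $\mathcal{F}_t$-fibers''; since no coprimality of $p,q$ is assumed, the regular $\mathcal{F}_t$-fiber generates only an index-$\gcd(p,q)$ subgroup of $\pi_1(M)$, so negating the fiber class does not by itself force $\hat s_*=-1$. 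The relation is true, but should be obtained as in Lemma \ref{lem:commute}, by restricting $\tilde s$ and $\Gamma$ to a connected $\Gamma$-invariant fiber of $\tilde{\mathcal{F}}_t$, where $\Gamma$ acts faithfully preserving orientation and $\tilde s$ reverses it, so the restrictions generate a dihedral group.

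For comparison, the paper's proof avoids any three-dimensional linearization. Assuming the geodesic $c\subset\mathrm{Fix}(s)$ is exceptional, it lifts both the $\mathcal{F}_g$-fiber $S^1_c$ and the $\mathcal{F}_t$-fiber $S^1_x$ over a singular point $x$ hit by $c$ to connected $\Gamma$-invariant circles $\tilde S^1_c$, $\tilde S^1_x$ in $S^3$ meeting in $|\Gamma|\cdot|S^1_c\cap S^1_x|=p+q$ points; choosing the lift $\tilde s$ that is the identity on $\tilde S^1_c$ and noting that $\tilde s$ acts on $\tilde S^1_x$ as a reflection with exactly two fixed points forces $p+q\le 2$, a contradiction. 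You should either adopt this intersection-counting argument or replace your linearization step by an argument of comparable elementarity (e.g.\ one showing directly, by restricting to invariant circles, that $\tilde s$ cannot preserve a $\Gamma$-invariant fiber of $\tilde{\mathcal{F}}_g$ on which it would have to act as the identity).
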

\begin{proof} Suppose a geodesic $c$ in $\mathrm{Fix}(s)$ is exceptional. Then we must have $p+q >2$, as there would not exist exceptional geodesics otherwise by Theorem B, $(ii)$, which has been proven in Section \ref{sub:spindle}. In particular, there is some singular point $x$ in $\mathrm{Fix}(s)$ hit by $c$. By Lemma \ref{lem:exceptional_geo} we can assume that $pq$ is odd. By our assumptions the fiber $S^1_c$ of $\mathcal{F}_g$ corresponding to $c$ and the fiber $S^1_x=T_x^1 \hat \Orb$ of $\mathcal{F}_t$ intersect in a single point. The fiber $S_c^1$ is pointwise fixed by $s$ while the fiber $S^1_x$ is reflected about two points. Since $S^1_c$ and $S^1_x$ are singular by assumption, their preimages under the covering $S^3 \To T^1 \hat \Orb$ are connected fibers $\tilde S^1_c$ of $\tilde{\mathcal{F}}_g$ and $\tilde S^1_x$ of $\tilde{\mathcal{F}}_t$ invariant under $\Gamma=\mathrm{Deck}(S^3 \To T^1 \hat \Orb)$ (cf. proof of Lemma \ref{lem:commute}). In particular, both $\tilde S^1_c$ and $\tilde S^1_x$ are invariant under each lift $\tilde s$ of $s$ to $S^3$. Since $s$ is the identity on $S^1_c$ we can choose a lift $\tilde s$ which is the identity on $\tilde S^1_c$. Since $\tilde s$ reverses the orientation of $\tilde S^1_x$, it acts as a reflection (see Section \ref{sub:linear}) that fixes precisely two points on $\tilde S^1_x$. Both together implies that $|\tilde S^1_c \cap \tilde S^1_x|\leq 2$. Because of $|\tilde S^1_c \cap \tilde S^1_x|=|\Gamma||S^1_c \cap S^1_x|=|\Gamma|=p+q$ (see Lemma \ref{lem:gluing} for the third equality), we conclude that $p+q \leq 2$, a contradiction. Hence the claim follows.
\end{proof}

Now we apply Lemma \ref{lem:pro_geo_per} in order to determine the geodesic periods. Recall that $s$ preserves the orbifold structure of $\hat{\Orb}_g$ and from Section \ref{sub:spindle}, f), the geodesic periods of $S^2(p,q)$.

If $p+q$ is even, there are precisely two geodesics contained in $\mathrm{Fix}(s)$. These geodesics are regular by Lemma \ref{lem:reg_in_fixset} and pointwise fixed by $s$. It follows that $\Orb_g=\hat \Orb_g/s\cong S^2(2,2,(p+q)/2)$ and that the geodesic periods of $\Orb$ are $(1_{\infty},(p+q)/2)$. Since $\Orb$ has a topological boundary, by Lemma \ref{lem:fix_point_i} we either have $\Orb_g/i\cong D^2(;2,2,(p+q)/2)$ or $\Orb_g/i\cong D^2(2;(p+q)/2)$. These cases occur depending on whether $pq$ is even or odd since by Lemma \ref{lem:fix_point_i} these are the conditions for the geodesics in $\mathrm{Fix}(s)$, which correspond to the singularities of order $2$ on $\Orb_g$, to be invariant under $i$ or not.

If $p+q$ is odd there is only one geodesic contained in $\mathrm{Fix}(s)$. This geodesic is regular by Lemma \ref{lem:reg_in_fixset} and pointwise fixed by $s$. The other $s$-invariant geodesic must also be regular, because otherwise both singular geodesics would have to be invariant, contradicting the fact that $s$ has only two fixed points on $\hat \Orb_g$. Moreover, this geodesic is not pointwise fixed and thus projects to a geodesic of half the period. Hence, in this case we have $\Orb_g\cong S^2(2,2,(p+q))$ and the geodesic periods are $(1_{\infty},2,(p+q))$. Since $pq$ is even it follows as above that $\Orb_g/i\cong D^2(;2,2,(p+q)/2)$.

\begin{rem}\label{rem:spherical_case_half}Observe that in the case $p=q$ we have $\Orb \cong S^2/\mathrm{D}_p \cong \Orb_g$ and $\Orb_g/i \cong S^2/\left\langle \mathrm{D}_p,-1\right\rangle$, where $\mathrm{D}_p< \SOr(3)$ is a dihedral group of order $2p$.
\end{rem}

\subsection{Orientable spherical orbifolds}
\label{sub:orientable}

The orientable spherical orbifolds are the quotients of $S^2$ by the finite subgroups of $\SOr(3)$. These are cyclic groups $\mathrm{C}_n\cong \Z_n$ of order $n$, dihedral groups $\mathrm{D}_n\cong \mathfrak{D}_n$ of order $2n$ and tetrahedral, octahedral and icosahedral groups $\mathrm{T}$, $\mathrm{O}$ and $\mathrm{I}$ isomorphic to the alternating group $\mathfrak{A}_4$, the symmetric group $\mathfrak{S}_4$ and the alternating group $\mathfrak{A}_5$, respectively. Let us suppose that $\Orb$ is a quotient of $S^2$ by one of these groups, call it $G$, endowed with a Besse metric. In other words, we have a $G$-invariant Besse metric on $S^2$ and we denote this Besse manifold by $\tilde \Orb$. From Theorem B, $(ii)$ in the case $p=q=1$, which has been proven in Section \ref{sub:spindle}, we know that $(\tilde \Orb)_g \cong S^2$ as orbifolds. Note that $(\tilde \Orb)_g$ is the simply connected orbifold cover of $\Orb_g$. Indeed, we have an ordinary covering $T^1 \tilde \Orb \To T^1 \Orb$ of Seifert fibered manifolds, with the fiberings being induced by the geodesic flow lines, and deck transformation group $G$. After collapsing the fibers this covering induces a covering of orbifolds $(\tilde \Orb)_g \To \Orb_g$ with deck transformation group $G$, i.e. we have $\Orb_g\cong (\tilde \Orb)_g/G$ for the induced action of $G$ on $(\tilde \Orb)_g$. Therefore, we simply write $\tilde \Orb_g:=(\tilde \Orb)_g$.

Theorem B, $(ii)$ and $(iii)$ in the case $p=q$, moreover show that every rotation in $G$ acts non-trivially as a rotation on $\tilde \Orb_g$. Since every nontrivial element in $G$ is a rotation, it follows that $G$ acts effectively on $\tilde \Orb_g$ and preserves the orientation. By Lemma \ref{lem:commute} the actions of $G$ and $i$ on $\tilde \Orb_g$ commute. Therefore these actions can be linearized simultaneously (cf. Section \ref{sub:linear}). Since the faithful representation of $G$, as an abstract group, in $\SOr(3)$ is unique, it follows that $\Orb_g=\tilde \Orb_g /G \cong \Orb$ as orbifolds. Since $i$ acts freely on $\tilde \Orb_g$ as an involution, its linearization in $\SOr(3)$ has to be minus the identity. Therefore, we have $\Orb_g/i\cong S^2/\Gs$ with $\Gs=\left\langle G,-1 \right\rangle$. Note that these results are compatible with Section \ref{sub:spindle} and Section \ref{sub:nonorbad}, see Remark \ref{rem:spherical_case} and Remark \ref{rem:spherical_case_half}. This finishes the proof of Theorem B in the case of orientable spherical orbifolds, and hence also the proof of Theorem A in this case as discussed in the last paragraph of Section \ref{sub:orb_of_geo}.

To compute the geodesic periods recall from Section \ref{sub:orb_of_geo} how they are determined by the covering $\Orb_g \To \Or_g/i$ in the case of a $2$-orbifold $\Orb$ with only isolated singularities. The only remaining cases in which this determination is not immediately clear from the combinatorial restriction that $i$ preserves the orders of the local groups of $\Orb_g$ correspond to the groups $\mathrm{D}_n$ and $\mathrm{T}$. In these cases we give a more geometric description which will be needed later.
\newline
\newline
\noindent 2., 3. If $G =\mathrm{D}_n$, $n\geq 2$, then $\Orb,\Orb_g\cong S^2(2,2,n)$ have double covers $\hat{\Orb},\hat{\Orb}_g \cong S^2(n,n)$. For even $n$ there exist two geodesics on $\hat{\Orb}$ connecting the two singular points on $\hat{\Orb}$ of order $n$ and passing a branch point of the covering $\hat{\Orb} \To \Orb$. In fact, we can choose minimizing segments between one of these singularities and the two branch points. Since the covering is two-fold these segments extend to trajectories of the desired geodesics. These two geodesics are regular by Lemma \ref{lem:exceptional_geo}. Moreover, by construction they project to exceptional geodesics on $\Orb$ of half the (regular) period that are fixed by $i$. Hence, we have $\Orb_g/i \cong D^2(;2,2,n)$ and the geodesic periods are $(1_{\infty},2,2,n)$.

For odd $n$ geodesics on $\hat{\Orb}$ passing a branch point of the covering $\hat{\Orb} \To \Orb$ project to geodesics on $\Orb$ of the same period. In particular, the geodesics of order $2$ on $\Orb$ do not hit singularities of even order. Therefore, by Lemma \ref{lem:fix_point_i} we have $\Orb_g/i \cong D^2(2;n)$ and the geodesic periods are $(1_{\infty},\overline{2},n)$. We record the following lemma.
\begin{lem}\label{lem:hit_branch}
An exceptional geodesic on $\hat{\Orb}\cong S^2(n,n)$ hits one of the branch points of $\hat{\Orb} \To \Orb\cong S^2(2,2,n)$, regardless of the parity of $n$.
\end{lem}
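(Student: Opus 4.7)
The plan is to track an exceptional geodesic $c$ on $\hat\Orb\cong S^2(n,n)$ together with its projection $\bar c$ to $\Orb\cong S^2(2,2,n)$, and to apply Lemma \ref{lem:fix_point_i} once on each of the two orbifolds. All the structural data needed has already been established by Theorem B, $(ii)$ in the case $p=q=n$ (proved in Section \ref{sub:spindle}) and by the identifications $\Orb_g\cong\hat\Orb_g/s$ from Section \ref{sub:orientable}.

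First I would use Theorem B, $(ii)$ to identify $\hat\Orb_g\cong S^2(n,n)$, so that the exceptional geodesics of $\hat\Orb$ correspond precisely to the two order-$n$ cone points of $\hat\Orb_g$. The same theorem gives $\hat\Orb_g/i\cong\RP^2(n)$ for odd $n$ and $\hat\Orb_g/i\cong D^2(n;)$ for even $n$; in both cases the quotient has only one cone point of order $n$, so $i$ must swap the two cone points of $\hat\Orb_g$. Hence $c$ is not self-inverse on $\hat\Orb$, and Lemma \ref{lem:fix_point_i} implies that $c$ avoids every even-order singular point of $\hat\Orb$. For odd $n$ this constraint is vacuous, but for even $n$ it says that $c$ misses both order-$n$ cone points of $\hat\Orb$, which is the key geometric input used later.

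Next I would observe that $s$ also swaps the two cone points of $\hat\Orb_g$: since $\Orb_g\cong\hat\Orb_g/s\cong S^2(2,2,n)$ has a single cone point of order $n$, the two order-$n$ cone points of $\hat\Orb_g$ must be identified by $s$. Therefore $c$ is not $s$-invariant as an element of $\hat\Orb_g$, so Lemma \ref{lem:pro_geo_per} yields that the projection $\bar c$ of $c$ to $\Orb$ is a prime geodesic, whose class in $\Orb_g$ is precisely the order-$n$ cone point. From the description of $\Orb_g/i$ as $D^2(2;n)$ or $D^2(;2,2,n)$, depending on the parity of $n$, this cone point of order $n$ lies on the boundary, hence is fixed by $i$, so $\bar c$ is self-inverse on $\Orb$.

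Finally I would apply Lemma \ref{lem:fix_point_i} to $\Orb$: the self-inverse geodesic $\bar c$ must hit an even-order singular point of $\Orb$. For odd $n$ the only candidates are the two order-$2$ cone points of $\Orb$, which are exactly the branch points of $\hat\Orb\to\Orb$, so $c$ hits the unique preimage of one of them, i.e.\ a branch point. For even $n$ the order-$n$ cone point of $\Orb$ is also even-order, but its two preimages in $\hat\Orb$ are the cone points of $\hat\Orb$, which $c$ was shown to avoid in the first step; hence $\bar c$ is forced to hit an order-$2$ cone point of $\Orb$ and $c$ hits the corresponding branch point. The only real subtlety is the case of even $n$, where one must exclude $\bar c$ from passing through the order-$n$ cone point of $\Orb$, and the non-self-inverseness of $c$ on $\hat\Orb$ is exactly what rules this out.
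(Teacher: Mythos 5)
Your argument is correct and follows essentially the same route as the paper: you show the order-$n$ geodesic on $\Orb$ is self-inverse (so by Lemma \ref{lem:fix_point_i} it hits an even-order singularity), and you exclude the order-$n$ cone point for even $n$ by noting that exceptional geodesics on the spindle cover $\hat\Orb$ are not self-inverse and hence miss its even-order cone points — which is exactly the content of Lemma \ref{lem:exceptional_geo} that the paper invokes at that step. The only difference is presentational: you re-derive that fact from Theorem B~$(ii)$ and make explicit, via Lemma \ref{lem:pro_geo_per}, the correspondence between exceptional geodesics on $\hat\Orb$ and the order-$n$ geodesic on $\Orb$, which the paper leaves implicit.
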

\begin{proof}
By our discussion above and Lemma \ref{lem:fix_point_i} a geodesic of order $n$ on $\Orb$ has to hit a singular point of even order. By Lemma \ref{lem:exceptional_geo} it has to be a branch point of the covering $\hat{\Orb} \To \Orb$.
\end{proof}
\noindent 4. If $G =\mathrm{T}$, then $\Orb,\Orb_g\cong S^2(2,3,3)$. A geodesic $c$ of order $3$ on $\Orb$ (corresponding to a singular point of order $3$ on $\Orb_g$)  is three-foldly covered by a geodesic on $\tilde \Orb \cong S^2$ that is invariant under a subgroup of $\mathrm{T}$ of order $3$. If $c$ hit a singularity of order $2$ on $\Orb$, the trajectory of its lift would be invariant under a subgroup of $\mathrm{T}$ of order $\geq 6$ and thus invariant under $\mathrm{T}$, since there are no subgroups of order $6$ in $\mathrm{T}$. However, then the order of $c$ would have to be at least $6$ which is a contradiction. Hence, $c$ does not hit a singularity of even order and thus, using Lemma \ref{lem:fix_point_i}, we have $\Orb_g/i \cong D^2(3;2)$ and the geodesic periods are $(1_{\infty},2,\overline{3})$.

\subsection{Non-orientable spherical orbifolds}\label{sub:non-orient_spher}
Suppose that $\Orb$ is a quotient of $S^2$ by a finite subgroup $G$ of $\Or(3)$ that does not preserve the orientation. Suppose further that $\Orb$ is endowed with a Besse metric. In other words, we have a $G$-invariant Besse metric on $S^2$ and we denote this Besse manifold by $\tilde \Orb$. Let $\Gp$ be the orientation preserving subgroup of $G$ and set $\hat \Orb = \tilde \Orb / \Gp$. By Section \ref{sub:orientable} we know that $\tilde \Orb_g:=(\tilde \Orb)_g\cong S^2$ is the simply connected covering orbifold of $\hat \Orb_g$ and that $\hat \Orb_g \cong \tilde \Orb_g / \Gp$. Recall from Section \ref{sub:orb_of_geo} that $\Orb_g$ was defined as $\Orb_g=\hat \Orb_g /(G/\Gp)$. Hence, $\tilde \Orb_g$ is also the simply connected covering orbifold of $\Orb_g$ and we have $\Orb_g \cong \tilde \Orb_g / G$ with respect to the induced action of $G$ on $\tilde \Orb_g$.

In the case $-1 \in G$ it follows from Section \ref{sub:pro_plane} that $\Orb$ has constant curvature and that $\Orb_g \cong \hat \Orb_g $. In the case $-1 \notin G$ we claim that $G$ acts effectively on $\tilde \Orb_g$. Indeed, in this case the $\mathrm{ord}(g)/2$-th power of an element $g \in G$ with $\mathrm{det}(g)=-1$ is a reflection and this reflection does not leave invariant the geodesics that hit its fixed point set perpendicularly. Hence, in this case we have $\Orb_g\cong S^2/\Gt$ for a finite subgroup $\Gt$ of $\SOr(3)$ that is abstractly isomorphic to $G$. The fact that a finite subgroup of $\SOr(3)$ is determined by its abstract isomorphism type implies that $\Gt= \{\mathrm{det}(g)g| g \in G\}$. Hence, in any case we have $\Orb_g\cong S^2/\Gt$, and $\Orb_g/i\cong S^2 / \Gs$ with $\Gs=\left\langle G, -1\right\rangle$ since $i$ acts on $\tilde \Orb$ as an inversion by Lemma \ref{lem:fix_point_i}. In particular, we have $\Orb_g/i \cong \Orb$ as orbifolds if $-1 \in G$.

It remains to prove rigidity of the geodesic periods in the respective cases and to compute them. Let $s$ be the deck transformation of the covering $\hat \Orb \To \Orb$. As seen in Section \ref{sub:linear} the involution $s$ either acts trivially on $\hat \Orb_g$ or fixes precisely two points. As seen above, it acts trivially if and only if $-1 \in G$. In order to determine the geodesic periods of $\Orb$, we have to identify the $s$-invariant geodesics on $\Orb$ and decide whether they are pointwise fixed or not. Then Lemma \ref{lem:pro_geo_per} tells us how the geodesic periods of the quotient look like. We go through the cases with $-1 \notin G$ listed under number 8.-12. in Table \ref{tab:gls} and use the Sch\"{o}nflies notation to specify the group $G$ that defines $\Orb\cong S^2/G$ (cf. Table \ref{tab:gls} in the appendix). Note that in all cases $\Orb_g/i$ is uniquely determined as an orbifold by its double covers $\Orb_g$ and $\Orb$. The case 7. with $\Gp=\mathrm{C}_n$ has already been treated in Section \ref{sub:nonorbad}. The cases with $-1 \in G$, listed under number 13.-19. in Table \ref{tab:gls}, in which a Besse metric has constant curvature, can be treated analogously. We only give details on case 14. as an example. We frequently use Lemma \ref{lem:fix_point_i} and the identification of geodesics as elements of the orbifold of geodesics without explicitly mentioning it.
\newline
\newline
\noindent 8. In the case of $\Orb\cong \RP^2(2n)\cong S^2/\mathrm{S}_{4n}$ we have $\hat\Orb\cong S^2(2n,2n) \cong \hat \Orb_g$ where $\mathrm{S}_{4n}\cong \Z_{4n}\cong \mathrm{C}_{4n}$. Hence, $\Orb_g\cong S^2/\mathrm{C}_{4n} \cong S^2(4n,4n)$ and $\Orb_g/i\cong D^2(4n;)$. Since $\Orb$ has only isolated singularities we see that the geodesic periods are $(1_{\infty},\overline{4n})$ (see Section \ref{sub:orb_of_geo}).
\newline
\newline
\noindent 9. In the case of $\Orb\cong D^2(2n+1;)\cong S^2/\mathrm{C}_{2n+1\mathrm{h}}$ we have $\hat\Orb\cong S^2(2n+1,2n+1)\cong \hat\Orb_g$ and $\hat\Orb_g/i\cong \mathbb{RP}^2(2n+1)$ where $\mathrm{C}_{2n+1\mathrm{h}}\cong \Z_{2n+1}\times \Z_2 \cong \Z_{4n+2} \cong \mathrm{C}_{4n+2}$. Hence, $\Orb_g\cong S^2/\mathrm{C}_{4n+2} \cong S^2(4n+2,4n+2)$ and $\Orb_g/i\cong D^2(4n+2;)$. In particular, the exceptional geodesics on $\hat \Orb$ are invariant under $s$. Since the two geodesics in the fixed point set of $s$ are also invariant, we see that these must be the exceptional geodesics. It follows that the geodesic periods are $(1_{\infty},\overline{2n+1})$.  
\newline
\newline
\noindent 10. In the case of $\Orb\cong D^2(2;2n)\cong S^2/\mathrm{D}_{2n\mathrm{d}}$ we have $\hat\Orb\cong S^2(2,2,2n) \cong \hat\Orb_g$ and $\hat\Orb_g/i\cong D^2(;2,2,2n)$ where $\mathrm{D}_{2n\mathrm{d}}\cong \mathfrak{D}_{4n} \cong \mathrm{D}_{4n}$. Hence, $\Orb_g \cong S^2/\mathrm{D}_{4n}\cong S^2(2,2,4n)$ and $\Orb_g/i \cong D^2(;2,2,4n)$. In particular, invariant under $s$ are an exceptional geodesic of order $2n$ and a regular geodesic. By Lemma \ref{lem:hit_branch} this exceptional geodesic of order $2n$ hits a singularity of order $2$ that is not fixed by $s$. Therefore, the invariant exceptional geodesic is not pointwise fixed, only the regular geodesic in $\mathrm{Fix}(s)\subset \hat\Orb$ is so. We conclude that the geodesic periods are $(1_{\infty},2,4n)$.
\newline
\newline
\noindent 11. In the case of $\Orb\cong D^2(;2,2,2n+1)\cong S^2/\mathrm{D}_{2n+1\mathrm{h}}$ we have $\hat\Orb\cong S^2(2,2,2n+1)\cong \hat\Orb_g$ and $\hat\Orb_g/i\cong D^2(2;2n+1)$ where $\mathrm{D}_{2n+1\mathrm{h}}\cong \mathfrak{D}_{2n+1}\times \Z_2 \cong \mathrm{D}_{4n+2}$. Hence, $\Orb_g\cong S^2/\mathrm{D}_{4n+2} \cong S^2(2,2,4n+2)$ and $\Orb_g/i\cong  D^2(;2,2,4n+2)$. In particular, invariant under $s$ are the exceptional geodesic of order $2n+1$ and a regular geodesic. These geodesics are contained in $\mathrm{Fix}(s)\subset \hat\Orb$, since there are two geodesics in $\mathrm{Fix}(s)$ and since $s$ only leaves two geodesics invariant. We conclude that the geodesic periods are $(1_{\infty},2,2n+1)$.
\newline
\newline
\noindent 12. In the case of $\Orb \cong D^2(;2,3,3)\cong S^2/\mathrm{T}_{\mathrm{d}}$ we have $\hat\Orb\cong S^2(2,3,3) \cong \hat\Orb_g$ and $\hat\Orb_g/i\cong D^2(3;2)$ where $\mathrm{T}_{\mathrm{d}} \cong \mathfrak{S}_4 \cong \mathrm{O}$. Hence, $\Orb_g\cong S^2/\mathrm{O} \cong S^2(2,3,4)$ and $\Orb_g/i\cong D^2(;2,3,4)$. In particular, invariant under $s$ is the exceptional geodesic of order $2$ and a regular geodesic. We claim that the former is not pointwise fixed by $s$. To see this we consider a three-fold orbifold covering $S^2(2,2,2)\cong \Orb' \To \hat\Orb\cong S^2(2,3,3)$. In Section \ref{sub:orientable}, 2., we have seen that the three exceptional geodesics of order $2$ on $ \Orb'$ are defined by minimizing segments $c_1$, $c_2$, $c_3$ connecting the three singularities of order $2$ pairwise. Due to the minimizing property, their trajectories have a trivial intersection. Indeed, if $c_1$ and $c_2$ intersected away from the singularities, the intersection point would divide $c_1$ and $c_2$ in equal proportions by their minimizing property. But this would contradict the uniqueness of minimizing segments between singularities of order $2$ (any such segment gives rise to an exceptional closed geodesic oscillating between the singularities of even order as discussed in Section \ref{sub:orientable}, 2., but in total there are only three exceptional geodesics on $\Orb'$). Hence, the three exceptional geodesics on $\Orb'$ do not pass through the branch points of the covering $\Orb' \To \hat\Orb$. In other words, the exceptional geodesic of order $2$ on $\hat\Orb$ does not hit a singularity of order $3$ and is thus not contained in $\mathrm{Fix}(s)$. The other, regular invariant geodesic is the unique geodesic in $\mathrm{Fix}(s)\subset \hat\Orb$ which is pointwise fixed. We conclude that the geodesic periods are $(1_{\infty},3,4)$.
\newline
\newline
\noindent 14. In the case of $\Orb \cong D^2(2n;)\cong S^2/\mathrm{C}_{2n\mathrm{h}}$ we have $\hat\Orb\cong S^2(2n,2n) \cong \hat\Orb_g\cong\Orb_g$ and $\Orb_g/i\cong \Orb$. In particular, all geodesics on $\Orb$ are invariant under $s$, but only the two exceptional geodesics in $\mathrm{Fix}(s)$ are pointwise fixed by $s$ (recall that $\Orb$ has constant curvature in this case). Therefore, the exceptional geodesics on $\hat \Orb$ project to geodesics of the same period, while all other geodesics project to geodesics of half the period. We conclude that the geodesic periods are $(1_{\infty},\overline{n})$.

\section{Conjugacy of induced contact structures} \label{sub:conjugacy_contact_structures}

Let $\Orb$ be a Besse $2$-orbifold with only isolated singularities. As in the manifold case, see \cite[Appendix B]{ABHS16} and \cite[Thm.~1.5.2]{MR2397738}, the unit tangent bundle $M=T^1 \Orb$ carries a natural contact $1$-form $\alpha$ whose Reeb flow is the geodesic flow on $M$. The form $\alpha$ is the restriction of the pullback of the canonical Liouville form on $T^* \Orb - \Orb$ via the isomorphism $T \Orb - \Orb \To T^* \Orb - \Orb$ induced by the metric (here we regard $\Orb \subset T^{(*)} \Orb$ as the zero-section). The aim of this section is to prove the following result.
\begin{thm}\label{thm:Reeb_conjugacy} Let $g_0$ and $g$ be Besse metrics with the same minimal period on a $2$-orbifold $\Orb$ with only isolated singularities. Let $\alpha_0$ and $\alpha$ be the corresponding contact forms on $T^1 (\Orb,g_0)$ and $T^1 (\Orb,g)$, respectively. Then there exists a diffeomorphism
\[
		\varphi : T^1 (\Orb,g_0) \To T^1 (\Orb,g)
\]
such that $\varphi^* \alpha = \alpha_0$.
\end{thm}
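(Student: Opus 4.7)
The plan is to reduce the theorem to a Moser-type deformation, after first producing an $S^1$-equivariant diffeomorphism between the two unit tangent bundles that identifies the two Reeb vector fields.

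For the identification step, I would argue as follows. By Wadsley's theorem, the Reeb flows of $\alpha_0$ and $\alpha$ define $S^1$-actions on $T^1(\Orb,g_0)$ and $T^1(\Orb,g)$ of common period equal to the prescribed minimal period. The resulting Seifert fiberings of type $o_1$ share the same base orbifold of oriented geodesics $\Orb_g$ by Theorem B. A case-by-case inspection of the proof of Theorem B (cf. Lemma \ref{lem:lens} for the spindle and spherical cases and the analogous treatments in Sections \ref{sub:nonorbad} and \ref{sub:non-orient_spher}) shows that the full Seifert invariants, together with the orientation of the fibers inherited from the geodesic flow, are also determined by the topology of $\Orb$, so the two fiberings are equivalent as oriented Seifert fiberings. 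By Lemma \ref{lem:seifert_classif} (and the discussion in Section \ref{sub:Seif}) this equivalence can be realized by an $S^1$-equivariant diffeomorphism $\psi_0: T^1(\Orb,g_0) \to T^1(\Orb,g)$ conjugating the two $S^1$-actions via the identity automorphism of $S^1$. Writing $M := T^1(\Orb,g_0)$ and $\alpha_1 := \psi_0^*\alpha$, the $S^1$-equivariance forces $(\psi_0)_* R_{\alpha_0} = R_\alpha$, so $\alpha_1$ is a contact form on $M$ with the same Reeb vector field $R := R_{\alpha_0}$ as $\alpha_0$.

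Next I would run the standard contact Moser trick on the linear interpolation $\alpha_t := (1-t)\alpha_0 + t\alpha_1$. Since $\alpha_i(R) = 1$ and $\iota_R d\alpha_i = 0$ for $i=0,1$, the same relations hold for $\alpha_t$ by linearity. Choosing an $R$-invariant $2$-plane distribution $\mathcal{H}$ transverse to $R$, the restrictions $d\alpha_0|_{\mathcal{H}}$ and $d\alpha_1|_{\mathcal{H}}$ are nondegenerate and induce the same orientation of $\mathcal{H}$ (both satisfy $\alpha_i \wedge d\alpha_i > 0$ for the fixed orientation of $M$), so their convex combination $d\alpha_t|_{\mathcal{H}}$ remains nondegenerate. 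Hence each $\alpha_t$ is a contact form with Reeb field $R$. The equation $\iota_{X_t} d\alpha_t = \alpha_0 - \alpha_1$ with $X_t \in \ker\alpha_t$ then admits a unique smooth solution because the right-hand side annihilates $R$ and $d\alpha_t|_{\ker\alpha_t}$ is nondegenerate. Integrating $X_t$ yields a flow $\phi_t$ on $M$ with $\phi_t^*\alpha_t = \alpha_0$, and $\varphi := \psi_0 \circ \phi_1$ then satisfies $\varphi^*\alpha = \alpha_0$.

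The main obstacle is the identification step: Theorem B directly supplies only the base orbifold of the Seifert fibering, and extracting the stronger statement that the fiberings themselves coincide for any two Besse metrics requires reading off the Seifert invariants from the explicit analyses in Sections \ref{sub:spindle}--\ref{sub:non-orient_spher}. Once this equivalence is secured, the Moser deformation is a routine adaptation of the argument used for the round sphere in \cite{ABHS16}.
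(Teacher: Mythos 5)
Your overall strategy -- conjugate the two $S^1$-actions coming from the geodesic flows, then run the contact Moser trick on the linear interpolation -- is exactly the paper's strategy, and your Moser step is carried out correctly: the interpolation $\alpha_t$ has constant Reeb field and is contact precisely when $d\alpha_0$ and $d\alpha_1$ restrict to $2$-forms of the same sign on a transverse plane field. But there is a genuine gap at the one point where something nontrivial must be checked, namely your parenthetical claim that ``both satisfy $\alpha_i\wedge d\alpha_i>0$ for the fixed orientation of $M$.'' For $i=1$ this says that $\psi_0^*(\alpha\wedge d\alpha)$ and $\alpha_0\wedge d\alpha_0$ induce the same orientation of $M$, i.e.\ that $\psi_0$ preserves the contact orientations. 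Nothing in your construction of $\psi_0$ guarantees this: $\psi_0$ is produced by Lemma \ref{lem:seifert_classif} as a conjugacy of $S^1$-actions, and the orientation data it respects (orientations of the fibers, and the Seifert-theoretic orientation of the total space) are a priori unrelated to the orientations $\alpha_0\wedge d\alpha_0$ and $\alpha\wedge d\alpha$. If the two orientations disagreed, the convex combination $d\alpha_t|_{\mathcal H}$ would vanish at some $t$ and the Moser deformation would break down, so this cannot be waved away. The paper closes exactly this gap with a separate argument: it pulls both forms back to the universal cover $S^3$, where the common Reeb flow is a free $S^1$-action, invokes Boothby--Wang to realize $\tilde\alpha_0$ and $\tilde\alpha_1$ as connection $1$-forms on the principal bundle $S^3\to S^2$ whose curvature forms represent the same cohomology class (minus $2\pi$ times the Euler class), concludes that both curvature forms are area forms inducing the same orientation on the base, and hence that $\tilde\alpha_0\wedge d\tilde\alpha_0$ and $\tilde\alpha_1\wedge d\tilde\alpha_1$ orient $S^3$ the same way. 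You need this argument (or a substitute for it).

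A secondary, more minor point: your identification step asserts that ``a case-by-case inspection of the proof of Theorem B'' determines the full oriented Seifert invariants of $\mathcal F_g$. That is true for the lens-space cases, where Lemma \ref{lem:lens} pins down the fibering completely, but the proof of Theorem B for $S^2(2,2,n)$, $S^2(2,3,3)$, $S^2(2,3,4)$ and $S^2(2,3,5)$ only identifies the base orbifold $\Orb_g$, not the remaining Seifert invariants. The paper instead quotes the fact that the unit tangent bundles of these orbifolds admit, up to orientation, only one Seifert fibering (Jankins--Neumann), which is the input you would need to make your inspection argument complete in those cases.
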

\begin{proof}
For the $2$-sphere this result is proven in \cite[Appendix B]{ABHS16}. We explain how the same methods can be used to prove the more general case. 

In case of the real projective plane and of spindle orbifolds we have seen in Lemma \ref{lem:lens} that the $S^1$-actions on $T^1 \Orb$ induced by the geodesic flows are conjugated by a diffeomorphism. Given Lemma \ref{lem:seifert_classif} the same statement is true for the remaining orbifolds in question, i.e. the spherical orbifolds $S^2(2,2,n)$, $S^2(2,3,3)$, $S^2(2,3,4)$ and $S^2(2,3,5)$, since their unit tangent bundles admit up to orientation only one Seifert fibering \cite[Thm.~10.2, p.~72]{MR0741334}. Hence, in each case there exists a diffeomorphism $\psi : T^1 (\Orb,g_0) \To T^1 (\Orb,g)$ that conjugates the $S^1$-actions induced by the geodesic flows. By assumption on the minimal periods the Reeb vector fields of $\alpha_0$ and $\alpha_1:= \psi^* \alpha$ coincide. Moreover, we claim that $\alpha_0$ and $\alpha_1$ define the same orientation, i.e. that $\psi$ preserves the orientations defined by $\alpha_0 \wedge d\alpha_0$ and $\alpha \wedge d\alpha$. It is sufficient to show that the pulled back contact forms $\tilde \alpha_0$ and $\tilde \alpha_1$ on the universal cover $S^3$ define the same orientation. The Reeb flows of $\tilde \alpha_0$ and $\tilde \alpha_1$ coincide and are defined by an $S^1$-action (cf. Remark \ref{rem:lifting}). From Sections \ref{sub:spindle} and \ref{sub:orientable} we know that $S^3/S^1 \cong S^2$ as orbifolds. In other words, the $S^1$-action defines a principal $S^1$-bundle with base $B=S^3/S^1\cong S^2$. By (the easy part of) a theorem by Boothby  and  Wang, see \cite{MR0112160} and \cite[Thm.~7.2.5]{MR2397738}, $\tilde \alpha_0$ and $\tilde \alpha_1$ are connection $1$-forms of this principal bundle, whose curvature forms $\omega_0$ and $\omega_1$ are area forms on $B$ satisfying $p^* \omega_0=d \tilde \alpha_0$ and $p^* \omega_1=d \tilde \alpha_1$, where $p: S^3 \To B$ is the natural projection. Moreover, $-[\omega_0 / 2\pi]=-[\omega_1 / 2\pi]$ is the Euler class of the principal $S^1$-bundle. In particular, $\omega_0$ and $\omega_1$ induce the same orientation on $B$ and hence $\tilde \alpha_0$ and $\tilde \alpha_1$ induce the same orientation on $S^3$. 

Now, as in \cite{ABHS16}, one can show that $\alpha_t=t \psi^*\alpha +(1-t) \alpha_0$ is a contact form for every $t \in [0,1]$ and apply Moser's argument to find a one-parameter family of diffeomorphisms $\phi_t: T^1 (\Orb,g_0) \To T^1 (\Orb,g_0)$, $t \in [0,1]$, such that $\phi_t^* \alpha_t = \alpha_0$ for every $t \in [0,1]$. In particular, 
\[
	\alpha_0=\phi_1^* \alpha_1 = \phi_1^* \psi^* \alpha
\]
and $\varphi=\psi \circ \phi_1$ is the desired diffeomorphism.
\end{proof}

\section{Appendix}

\subsection{Rigidity on the real projective plane} \label{sub:rigidity}

For a Riemannian metric $g$ on $\Orb=\RP^2$ we denote its corresponding area measure by $\nu_{g}$, its total area by $A_{g}$ and the length of a shortest non-contractible loop by $a_{g}$. The following inequality is due to Pu \cite{MR0048886}
\[
			A_g \geq \frac{2}{\pi} a_g^2.
\]
We recall its proof and show how it implies rigidity for Besse metrics on $\RP^2$. This argument was explained to us by A. Abbondandolo.

Suppose that $g$ is some Riemannian metric on $\RP^2$ and let $g_0$ be the standard Riemannian metric on $\RP^2$ of constant curvature $1$. The group $G=\SOr(3)$ acts on $\RP^2$ in its standard way. By the uniformization theorem there is some positive smooth function $\varphi$ on $\RP^2$ such that $g=\varphi \cdot g_0$. We endow $G$ with its Haar measure $\mu$ and define
\[
		\overline{\varphi}=\left(\int_G (g^* \varphi)^{1/2} d \mu \right)^2
\]
and $\overline{g}= \overline{\varphi} \cdot g_0$. By construction $\overline{g}$ is a $G$-invariant Riemannian metric on $M=\RP^2$ and hence has constant curvature. We claim that $A_g \geq A_{\overline{g}}$ and $a_{\overline{g}} \geq a_g$ . Indeed, we have 
\begin{equation*}
\begin{split}
A_{\overline{g}} &= \int_M \overline{\varphi} d\nu_{g_0}=\int_M  \left( \int_G (h^* \varphi)^{1/2} d \mu  \right)^2 d\nu_{g_0} \\ &\leq \int_M  \left( \int_G h^*\varphi d \mu  \right) d\nu_{g_0} =  \int_G  \left( \int_M h^*\varphi d\nu_{g_0} \right) d \mu = \int_G  A_g d \mu =A_g 
\end{split}
\end{equation*}
where we have applied the Cauchy-Schwarz inequality. Moreover, with a shortest non-contractible loop (and hence geodesic) $\gamma$ on $\RP^2$ with respect to $\overline{g}$ we have
\begin{equation*} \label{eq1}
\begin{split}
a_{\overline{g}}&= \int_0^1 \overline{\varphi}(\gamma(s))^{1/2} ||\dot{\gamma}(s)||_{g_0}ds=  \int_0^1 \left(\int_G ((h^* \varphi)(\gamma(s)))^{1/2} ||\dot{\gamma}(s)||_{g_0} d \mu \right) ds \\
& =\int_G \left(\int_0^1 ((h^* \varphi)(\gamma(s)))^{1/2} ||\dot{\gamma}(s)||_{g_0} ds \right) d \mu \geq \int_G a_g d \mu = a_g.
\end{split}
\end{equation*}
In particular, this proves Pu's inequality, since we have $A_{\overline{g}} = \frac{2}{\pi} a_{\overline{g}}^2$ for the metric of constant curvature $\overline{g}$ \cite{MR0048886}. Now suppose that $g$ is Besse. Theorem \ref{thm:Reeb_conjugacy} implies that the same equality also holds for the Besse metric $g$. In fact, after normalizing $g$ such that $a_g=\pi$ Theorem \ref{thm:Reeb_conjugacy} implies $\mathrm{vol}(T^1\RP^2, \alpha_0\wedge d \alpha_0)=\mathrm{vol}(T^1\RP^2, \alpha\wedge d \alpha)$ which in turn implies $A_g=2\pi$ by fiberwise integration with respect to $T^1\RP^2 \To \RP^2$. (Alternatively, this follows from a theorem of Weinstein: The two-fold covering $(S^2,\hat g)$ of $(\RP^2,g)$ has area $2A_g$ and the minimal geodesic period is $2a_g$ due to the fact that $\Orb_g\cong S^2$. Now the theorem by Weinstein says that for a Besse metric $\hat g$ on $S^2$ we have $\mathrm{area}(S^2,\hat g)=\frac{l^2}{\pi} $, where $l$ is the minimal geodesic period \cite{MR0390968}, cf. \cite[Prop.~2.24]{MR0496885}). It follows that $A_{\overline{g}}= A_g$, i.e. we have equality in the Cauchy-Schwarz inequality implying that $\varphi$ is constant. Hence, $g$ is proportional to $g_0$ and has constant curvature.

\begin{table}[p] \centering
\begin{tabular}{ |r|c| l | l| l | l |}
\hline                       
  & $(G<\Or(3))$ &$\Orb(\cong S^2/G)$ & $\Orb_g(\cong S^2/\Gt)$&$\Orb_g/i(\cong S^2/\Gs)$ & geod. periods \\
  \hline
	\hline
	 &\multicolumn{2}{|l|}{($G<\SOr(3)$, $G=\Gp$)} &$(\cong S^2/G)$ &$(\cong S^2/\Gs)$&\\
	\hline
   &$(\mathrm{C}_{p})$& $S^2(p,q)$ & & &    \\ 
 1. && $2|(p+q)$, $2|pq$ & $S^2/\mathrm{C}_{(p+q)/2}$& $S^2/\mathrm{C}_{(p+q)/2\mathrm{h}}$ & $(1_{\infty},\overline{(p+q)/2})$   \\
  && $2|(p+q)$, $2\not|pq$  & $S^2/\mathrm{C}_{(p+q)/2}$& $S^2/\mathrm{S}_{p+q}$ & $(\overline{(p+q)/2})$   \\
 1'. && $2\not|(p+q)$, $2|pq$ & $S^2/\mathrm{C}_{p+q}$ &$S^2/\mathrm{C}_{p+q\mathrm{h}}$& $(1_{\infty},\overline{p+q})$   \\
 2. &$\mathrm{D}_{2n}$& $S^2(2,2,2n)$ & $S^2/\mathrm{D}_{2n}$ &$S^2/\mathrm{D}_{2n\mathrm{h}}$& $(1_{\infty},2,2,2n)$  \\
 3. &$\mathrm{D}_{2n+1}$& $S^2(2,2,2n+1)$&$S^2/\mathrm{D}_{2n+1}$&$S^2/\mathrm{D}_{2n+1\mathrm{d}}$   &$(1_{\infty},\overline{2},2n+1)$ \\
 4. &$\mathrm{T}$& $S^2(2,3,3)$ & $S^2/\mathrm{T}$ &$S^2/\mathrm{T_h}$&$(1_{\infty},2,\overline{3})$  \\
 5. &$\mathrm{O}$& $S^2(2,3,4)$ & $S^2/\mathrm{O}$ &$S^2/\mathrm{O_h}$&$(1_{\infty},2,3,4)$ \\
 6. &$\mathrm{I}$& $S^2(2,3,5)$ & $S^2/\mathrm{I}$ &$S^2/\mathrm{I_h}$&$(1_{\infty},2,3,5)$  \\
\hline
	 &\multicolumn{2}{|l|}{($-1 \notin G \not < \SOr(3)$, $G\cong \Gt$)} &$(\cong S^2/\Gt)$ &$(\cong S^2/\Gs)$&\\
	\hline
  & $(\mathrm{C}_{p\mathrm{v}})$& $D^2(;p,q)$ &&&  \\
 7. &&   $2 |(p+q)$, $2|pq$ &$S^2/\mathrm{D}_{(p+q)/2}$&$S^2/\mathrm{D}_{(p+q)/2\mathrm{h}}$&$(1_{\infty},(p+q)/2)$  \\
  &  &$2 |(p+q)$, $2\not|pq$ &$S^2/\mathrm{D}_{(p+q)/2}$&$S^2/\mathrm{D}_{(p+q)/2\mathrm{d}}$&$(1_{\infty},(p+q)/2)$  \\
 7'. && $2\not|(p+q)$, $2|pq$ & $S^2/\mathrm{D}_{p+q}$ &$S^2/\mathrm{D}_{p+q\mathrm{h}}$& $(1_{\infty},2,p+q)$   \\
 8. &$\mathrm{S}_{4n}$ &$\R \mathbb{P}^2(2n)$ &$S^2/\mathrm{C}_{4n}$&$S^2/\mathrm{C}_{4n\mathrm{h}}$& $(1_{\infty},\overline{4n})$   \\
 9. &$\mathrm{C}_{2n+1\mathrm{h}}$ &$D^2(2n+1;)$ &$S^2/\mathrm{C}_{4n+2}$ &$S^2/\mathrm{C}_{4n+2\mathrm{h}}$&  $(1_{\infty},\overline{2n+1})$ \\
 10. &$\mathrm{D}_{2n\mathrm{d}}$ &$D^2(2;2n)$ & $S^2/\mathrm{D}_{4n}$ &$S^2/\mathrm{D}_{4n\mathrm{h}}$&$(1_{\infty},2,4n)$ \\
 11. & $\mathrm{D}_{2n+1\mathrm{h}}$&$D^2(;2,2,2n+1)$ &$S^2/\mathrm{D}_{4n+2}$  &$S^2/\mathrm{D}_{4n+2\mathrm{h}}$& $(1_{\infty},2,2n+1)$\\
 12. &$\mathrm{T_d}$ &$D^2(;2,3,3)$ & $S^2/\mathrm{O}$ &$S^2/\mathrm{O}_{\mathrm{h}}$& $(1_{\infty},3,4)$ \\
\hline
	 &\multicolumn{2}{|l|}{($-1\in G= \Gs\cong \Gp \times \Z_2$)} &$(\cong S^2/\Gp)$ &$(\cong S^2/G)$&\\
	\hline
13.& $\mathrm{S}_{\mathrm{4n+2}}$&$\R \mathbb{P}^2(2n+1)$& $S^2/\mathrm{C}_{2n+1}$&$S^2/\mathrm{S}_{\mathrm{4n+2}}$&$(\overline{2n+1})$ \\
14.& $\mathrm{C}_{2n\mathrm{h}}$ &$D^2(2n;)$ &$S^2/\mathrm{C}_{2n}$ &$S^2/\mathrm{C}_{2n\mathrm{h}}$&  $(1_{\infty},\overline{n})$ \\
15.&$\mathrm{D}_{2n+1\mathrm{d}}$ &$D^2(2;2n+1)$ & $S^2/\mathrm{D}_{2n+1}$  &$S^2/\mathrm{D}_{2n+1\mathrm{d}}$& $(1_{\infty},2n+1)$ \\
16.&$\mathrm{D}_{2n\mathrm{h}}$ &$D^2(;2,2,2n)$ &$S^2/\mathrm{D}_{2n}$ &$S^2/\mathrm{D}_{2n\mathrm{h}}$& $(1_{\infty},n)$  \\
17. &$\mathrm{T_h}$ &$D^2(3;2)$ & $S^2/\mathrm{T}$ &$S^2/\mathrm{T_h}$&$(1_{\infty},\overline{3})$\\
18.&$\mathrm{O_h}$ &$D^2(;2,3,4)$ & $S^2/\mathrm{O}$ &$S^2/\mathrm{O_h}$& $(1_{\infty},2,3)$\\
19.&$\mathrm{I_h}$ &$D^2(;2,3,5)$& $S^2/\mathrm{I}$ &$S^2/\mathrm{I_h}$&$(1_{\infty},3,5)$ \\

  \hline 
 \end{tabular}
\caption{Orbifolds of geodesics and (labeled) geodesic periods of Besse $2$-orbifolds. For definitions and notations see Sections \ref{sub:Riem_orb} and \ref{sub:orb_of_geo}. Expressions that only hold in the good orbifold case, i.e. when $p=q$, are stated in parentheses. For the good orbifolds $\Orb=S^2/G$, $G<\mathrm{O}(3)$, appearing in the table the second column specifies $G$ in terms of the Sch\"{o}nflies notation, see e.g. \cite{LaLi}). Recall that $\Gp = G\cap \SOr(3)$, $\Gt = \{g \in G| \mathrm{det}(g)g\}$ and $ G^*=\left\langle G,-1 \right\rangle$. A detailed discussion of the finite subgroups of $\Or(3)$ and their relations can for instance be found in \cite{LaLi}.}
\label{tab:gls} 
\end{table}

%

\clearpage

\end{document}